\documentclass{article}

\usepackage{PRIMEarxiv}

\usepackage[utf8]{inputenc} 
\usepackage[T1]{fontenc}    
\usepackage{hyperref}       
\usepackage{url}            
\usepackage{booktabs}       
\usepackage{amsfonts}       
\usepackage{amsmath,amssymb}
\usepackage{mathtools}
\usepackage{nicefrac}       
\usepackage{microtype}      
\usepackage{lipsum}
\usepackage{fancyhdr}       
\usepackage{graphicx}       
\usepackage{amsthm}
\usepackage{times}
\usepackage{subfig}
\usepackage{xcolor}
\usepackage{array}
\usepackage{cite}
\usepackage{multirow}
\usepackage{adjustbox}
\usepackage[scr=dutchcal]{mathalfa}
\usepackage[font=small,labelfont=bf]{caption}
\usepackage{enumitem}
\usepackage{tikz}
\usetikzlibrary{arrows,matrix} 

\newcommand{\Green}[1]{{\color{teal} #1}}
\newcommand{\Red}[1]{{\color{red} #1}}

\newtheorem{theorem}{Theorem}
\newtheorem{lemma}{Lemma}
\newtheorem{proposition}{Proposition}

\newtheorem{assumption}{Assumption}

\newtheorem{remark}{Remark}

\newcommand{\motifs}{motif}
\newcommand{\IFFM}{IFFM}

\usepackage{tikz-network}
\tikzset{
    block/.style = {draw, fill=white, rectangle, minimum height=.75cm, minimum width=1.25cm},
    Circle/.style = {draw, thick, fill=white, circle, minimum height=.75cm, minimum width=1.25cm},
    tmp/.style  = {coordinate}, 
    sum/.style= {draw=white, fill=white, circle, minimum height=.75cm, minimum width=1.25cm},
    input/.style = {coordinate},
    output/.style= {coordinate},
    pinstyle/.style = {pin edge={to-,thin,black}
    }
}

\title{
When is cumulative dose response monotonic?\\ Analysis of incoherent feedforward {\motifs}s
\thanks{This research was partially supported by
ONR grant N00014-21-1-2431 and AFOSR grant 792309.}
}

\author{
  Moh Kamalul Wafi$^1$, Arthur C. B. de Oliveira$^1$, and Eduardo D. Sontag$^{1,2}$ \\
  $^1$Department of Electrical and Computer Engineering. \\
  $^2$Department of BioEngineering and affiliated with the Departments of Chemical Engineering and Mathematics. \\
  Northeastern University \\
  Boston, USA\\
  \texttt{\{wafi.m, a.castello, e.sontag\}@northeastern.edu} \\
}

\begin{document}
\maketitle

\begin{abstract}
We study the monotonicity of the cumulative dose response (cDR) for a class of incoherent feedforward {\motifs} ({\IFFM}) systems with linear intermediate dynamics and nonlinear output dynamics. While the instantaneous dose response (DR) may be nonmonotone with respect to the input, the cDR can still be monotone. To analyze this phenomenon, we derive an integral representation of the sensitivity of cDR with respect to the input and establish general sufficient conditions for both monotonicity and non-monotonicity. These results reduce the problem to verifying qualitative sign properties along system trajectories. We apply this framework to four canonical {\IFFM} systems and obtain a complete characterization of their behavior. In particular, {\IFFM}1 and {\IFFM}3 exhibit monotone cDR despite potentially non-monotone DR, while {\IFFM}2 is monotone already at the level of DR, which implies monotonicity of cDR. In contrast, {\IFFM}4 violates these conditions, leading to a loss of monotonicity. Numerical simulations indicate that these properties persist beyond the structured initial conditions used in the analysis. Overall, our results provide a unified framework for understanding how network structure governs monotonicity in cumulative input–output responses.
\end{abstract}
\allowdisplaybreaks

\keywords{
Incoherent feedforward motifs (IFFM) \and
Cumulative dose response \and
Monotonicity \and
Perfect Adaptation \and
Positive systems
}

\section{Introduction}\label{sec:intro}

Biological systems, from single cells to whole organisms, can adjust their responses to environmental signals. A key feature of this ability is maintaining internal stability under sustained inputs while still responding transiently to changes. This behavior, known as \emph{adaptation} or \emph{homeostasis}, arises from the interplay of signal transduction and gene regulation dynamics \cite{ref1,two-zone-journal}.

A system exhibits \emph{perfect adaptation} if its output returns to a steady state independently of the magnitude of a constant input \cite{ref3}. Although adaptation is an asymptotic property, many applications focus on finite-time behavior \cite{2025_arxiv_dynamic_phenotypes}. In particular, one studies how the output at a \textit{fixed time} depends on the input. 
This functional dependency yields the notion of \emph{dose response} (DR). 
The \emph{cumulative dose response} (cDR), on the other hand, considers the total accumulated output over a time interval, which is often what experiments measure, the ``area under the curve.''
A key example of cDR arises in T-cell recognition, where immune cells must respond to antigens without overactivation \cite{2025_arxiv_dynamic_phenotypes}, and the response is quantified by measuring accumulated cytokines over a finite period. 

Incoherent feedforward {\motifs}s ({\IFFM}s) define a class of network motifs known to generate adaptive responses. In these systems, the input affects the output both directly and indirectly through an intermediate regulatory pathway with an opposing effect \cite{2025_arxiv_dynamic_phenotypes}, see Fig.~\ref{fig:1a} for an illustration. This antagonistic structure enables such motifs to exhibit adaptation and to detect changes in the input, see Fig.~\ref{fig:1b} and some studies in~\cite{Alon2006-sg,shoval10,shoval_alon_sontag_2011} using the simplest models.
\begin{figure}
    \centering

    \subfloat[\label{fig:1a}]{%
        \begin{minipage}[c]{0.4\linewidth}
            \centering
            \begin{minipage}[c]{0.45\linewidth}
                \centering
                \begin{tikzpicture}[>=stealth, thick]
                    \node (u) at (0,4) {$u$};
                    \node (x) at (0,2) {$x$};
                    \node (y) at (0,0) {$y$};

                    \draw[->] (u) -- (x);
                    \draw[red] (x) -- ($(y)+(0,0.3)$);
                    \draw[red] ($(y)+(-0.15,0.3)$) -- ($(y)+(0.15,0.3)$);
                    \draw[->] (u) -| ($(y)+(-.75,0)$) -- (y);
                \end{tikzpicture}
            \end{minipage}
            \hfill
            \begin{minipage}[c]{0.45\linewidth}
                \centering
                \begin{tikzpicture}[>=stealth, thick]
                    \node (u) at (0,4) {$u$};
                    \node (x) at (0,2) {$x$};
                    \node (y) at (0,0) {$y$};

                    \draw[->] (u) -- (x);
                    \draw[->] (x) -- (y);
                    \draw[red] (u) -- ++(-.75,0) |- ($(y)+(-0.18,0)$);
                    \draw[red] ($(y)+(-0.18,-0.15)$) -- ($(y)+(-0.18,0.15)$);
                \end{tikzpicture}
            \end{minipage}
        \end{minipage}
    }
    \subfloat[\label{fig:1b}]{%
        \begin{minipage}[c]{0.33\linewidth}
            \centering
            \includegraphics[width=\linewidth]{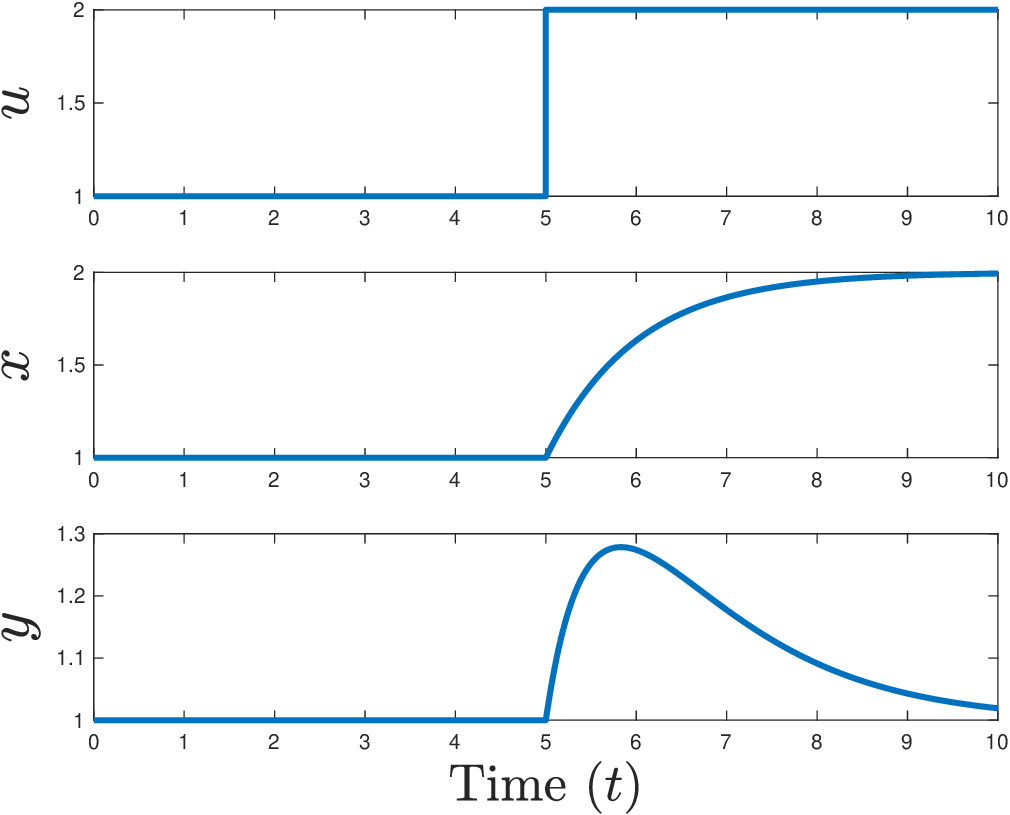}
        \end{minipage}
    }

    \caption{(a) {\IFFM}s with inhibition via $x$ (left) or directly from $u$ (right). (b) An example: suppose that the external input $u$ changes from a baseline value $u=1$ to a new constant value $u=2$ at time $5s$, and $x(0)=1$, $y(0)=1$ (steady state for $u=1$ in system $\dot x = -x+u$, $\dot y = u/x-y$). This acts as a ``change detector'': $\Delta u(t)$ triggers an activity burst, followed by a return to the adapted value, $y(t) \rightarrow y(0)$.}
    \label{fig:iffl}
\end{figure}

As explained in~\cite{Ankit_EDS,2025_arxiv_dynamic_phenotypes}, experimental work leads one to the following question: how do the DR and cDR, at a given finite time, depend on the magnitude of a constant stimulus input $u$? Specifically, when is the cDR monotonic as a function of the input magnitude $u$? Note that this is a \textit{very different question from asking about monotonicity or not of the output as a function of time}. From now on, when we refer to monotonicity, we mean in this sense of dependence on input magnitude. An interesting phenomenon, discussed in the above papers, is that even if the DR happens to be nonmonotonic, the cDR could be monotonic, and this makes the analysis much harder. The study~\cite{Ankit_EDS} established the monotonicity of cDR for two two-dimensional {\IFFM}s, both having a linear first-order intermediate node dynamics.
In this work, we investigate monotonicity properties of the cumulative dose response for a larger class of systems in which the intermediate node has $n$-dimensional linear dynamics, encompassing several canonical {\IFFM} structures. Our main contributions are as follows:

\begin{itemize}[leftmargin=*]
    \item We formulate a general class of dynamical systems extending beyond first-order models, allowing for vector-valued intermediate states and nonlinear output dynamics.
    
    \item We derive an integral representation of the sensitivity of the cumulative dose response (cDR) with respect to the input. This leads to two general results: a sufficient condition for monotonicity and a complementary sufficient condition for non-monotonicity. Together, these results reduce the problem of determining monotonicity to verifying qualitative sign conditions along system trajectories.
    
    \item We apply this framework to four canonical {\IFFM} systems and determine their monotonicity behavior. In particular, we show that certain {\IFFM} structures exhibit monotone cumulative responses despite nonmonotone dose responses, while others fail to satisfy the sufficient conditions for monotonicity.
    
    \item We identify two novel mechanisms of {\IFFM}s that lead to nonmonotonic response.
\end{itemize}

These results provide a more unified perspective on monotonicity of cumulative responses and clarify the role of network structure in shaping input–output behavior. 

\section{Theoretical Framework}\label{sec:Problem}

In this paper, we let $\mathbb{R}_+^{n\times m} := \{A\in\mathbb{R}^{n\times m}\mid A> 0\}$ be the set of $n\times m$ matrices with positive entries. The sets $\mathbb{R}_+^{n}$ and $\mathbb{R}_+$ are defined similarly. The orders $>,<,\geq$, and $\leq$ are interpreted elementwise when applied to vectors and matrices. The symbol $\mathbf{0}_p$ is a zero vector of dimension $p$.

\subsection{Motivating model and assumptions}
\label{subsec:Assumption}

By definition, an incoherent feedforward {\motifs} ({\IFFM}) is a system where the input $u$ affects the output 
$y$ both directly and indirectly through an intermediate state $x$. Here, $x$ acts as a \emph{delayed representation} of $u$, such that $y$ receives \emph{conflicting signals} (activation and repression).
Before turning to more general dynamics to allow the modeling of possibly longer delays (or other 
filtering properties), we consider a simple model, in order to motivate the setting. 

We use a first-order stable linear dynamics $\dot x = -ax + bu$, $u\in\mathbb{R}_+$, as a model of a delay. 
For the $y$--dynamics receiving the conflicting signals, we make these modeling assumptions:
\begin{enumerate}[leftmargin=*]
    \item Both $x$ and $u$ act either as activators or inhibitors (but not both simultaneously), and no homodimers form\footnote{There are no powers $u^i$ or $x^i$ with $i>1$.}.
    \item The system is stable and exhibits perfect adaptation to constant inputs, meaning that the steady-state value of $y$ is independent of $x$ and $u$. Moreover, the response is not independent of $u$ (finite relative degree).
    
    \item Any Michaelis--Menten constants are sufficiently small and are approximated by zero. For instance, terms of the form $u/(K+x)$ are approximated by $u x^{-1}$ (nonzero $K$ will be allowed in our actual theorems).
\end{enumerate}
Thus, we consider dynamics of the form:
\begin{equation}
\label{eq:monomials}
    \dot y \;= \;c \, x^{\alpha_1} u^{\beta_1} \,-\, d \,x^{\alpha_2} u^{\beta_2} y,
\end{equation}
where the exponents $\alpha_i,\beta_i \in \{-1,0,1\}$. 
Here $c,d$ are positive constants; for simplicity in this motivational discussion, we take
$a \!=\! b \!=\! c \!=\! d \!=\! 1$.
The above assumptions get reflected into the following structural constraints on the parameters:
\begin{enumerate}
    \item Each of $x$ and $u$ appears exactly once: $|\alpha_1| + |\alpha_2| = 1,$ and $|\beta_1| + |\beta_2| = 1$.
    \item Equal degrees: $\alpha_1 + \beta_1 = \alpha_2 + \beta_2.$
\end{enumerate}

The first property corresponds to the assumption that no variable can be both an activator and an inhibitor, and the second property guarantees perfect adaptation, since at steady state $y =
u^{(\alpha_1+\beta_1)-(\beta_1+\beta_2)}$.
The admissible exponent combinations are given in Table~\ref{tab:systems}, in which the corresponding $y$ equations are shown in the last column.
\begin{table}[t!]
    \centering
    \caption{Admissible scalar {\IFFM} systems}
    \begin{tabular}{|c|c|c|c|c|c|}
    \toprule
    No. & $\alpha_1$ & $\beta_1$ & $\alpha_2$ & $\beta_2$ & $\dot y$ \\ 
    \midrule
    1 & 1 & $-1$ & 0 & 0 & $\frac{x}{u} - y$ \\[.5em]
    2 & 1 & 0 & 0 & 1 & $x - uy$ \\[.5em]
    3 & $-1$ & 1 & 0 & 0 & $\frac{u}{x} - y$ \\[.5em]
    4 & $-1$ & 0 & 0 & $-1$ & $\frac{1}{x} - \frac{1}{u}y$ \\[.5em]
    5 & 0 & 1 & 1 & 0 & $u - xy$ \\[.5em]
    6 & 0 & 0 & 1 & $-1$ & $1 - \frac{x}{u}y$ \\[.5em]
    7 & 0 & $-1$ & $-1$ & 0 & $\frac{1}{u} - \frac{1}{x}y$ \\[.5em]
    8 & 0 & 0 & $-1$ & 1 & $1 - \frac{u}{x}y$ \\
    \bottomrule
    \end{tabular}
    \label{tab:systems}
\end{table}

\subsection{Incoherent feedforward {\motifs}s \emph{({\IFFM}s)} dynamics}
\label{subsec:{\IFFM}}

In general, motivated by the assumptions in the previous subsection, we consider systems of the form:
\begin{equation}\label{eq:IFFL:general}
    \dot x(t)=Ax(t)+bu, 
    \qquad 
    \dot y(t)=F\bigl(x(t),y(t),u\bigr),
\end{equation}
where $x(t)\in\mathbb R^{n}$ is the intermediate state, $y(t)\in\mathbb R$ is the output of the system, and the input $u\in\mathbb R_+$ is constant. We assume that the $x$-subsystem is stable positive\footnote{The restriction to positive systems is natural in biology, where the states represent typically concentrations of molecules. }.
\begin{assumption}
    The $x$-subsystem in \eqref{eq:IFFL:general} is stable and positive, i.e., for all $x(0)\in\mathbb{R}^n_+$ and inputs $u(t)\in\mathbb{R}^n_+$, the corresponding solutions satisfy $x(t)\ge 0, \forall t\ge 0$. Equivalently, the matrix $A\in\mathbb{R}^{n\times n}$ is Hurwitz and Metzler, and $b\in\mathbb{R}^n_+$.
\end{assumption}

The function $F:\mathbb{R}^n \times \mathbb{R} \times \mathbb{R}_+ \to \mathbb{R}$ is of class $C^1$. Its partial derivatives with respect to $x$ and $u$ have opposite signs, hence $\partial_u F(\cdot)\partial_x F(\cdot) < 0$, as in the examples in Table~\ref{tab:systems}, representing the ``incoherent'' effects of $x$ and $u$ on the output.
Moreover, motivated by the previous discussion of systems~\eqref{eq:monomials}, \textit{in our main results we will restrict attention to systems in which $F$ has forms shown in \eqref{eq:monomials}}, where instead of ``$cx$'' or ``$dx$'' we have a linear function $c^\top x$ or $d^\top x$ with a positive vector $c$ or $d$ (to preserve positivity of the linear system).

Unless otherwise stated, we consider initial conditions $x_0\ge 0$ and $y_0\in\mathbb{R}_+$. In particular, when convenient, we choose $y_0=y_{\mathrm{ss}}$, where $y_{\mathrm{ss}}$ denotes the steady state of the corresponding $y$--subsystem evaluated at the steady state $x_{\mathrm{ss}}(u)\coloneqq -A^{-1}bu\in\mathbb{R}^n_+.$
For each fixed input $u\in\mathbb{R}_+$ and initial condition $(x_0,y_0)$, the system \eqref{eq:IFFL:general} admits a unique solution $(x_u(t),y_u(t))$\footnote{The dependence on the initial condtions will not be indicated explicitly; they will be fixed at values to be discussed.} defined for all $t\ge 0$. Moreover, the positive orthant is forward invariant, i.e., if $u\in\mathbb{R}_+$, $x_0\ge 0$ and $y_0\ge 0$, then $x_u(t)\ge 0$ and $y_u(t)\ge 0$ for all $t\ge 0$. The following remark is immediate.
\begin{remark}
    Since $A$ is Hurwitz, $x_u(t)$ converges exponentially to the equilibrium $x_{\mathrm{ss}}(u) = -A^{-1}bu.$
    For each \emph{{\IFFM}} system, the $y$--subsystem preserves nonnegativity. In addition, since $x_u(t)$ is bounded, standard comparison arguments imply that $y_u(t)$ remains bounded for all $t\ge 0$.
\end{remark}

\subsection{Dose response and cumulative dose response}
\label{subsec:DR_cDR}

Recall $(x_u(t),y_u(t))$ as a unique solution of \eqref{eq:IFFL:general} for a fixed $u$. We define the dose response (DR) and cumulative dose response (cDR) at time $T>0$ as
\begin{equation}\label{eq:DR_cDR:def}
    \mathrm{DR}(u,T) \coloneqq y_u(T),
    \qquad
    \mathrm{cDR}(u,T) \coloneqq \int_0^T y_u(t)\,dt.
\end{equation}
The dose response $\mathrm{DR}(u,T)$ represents the instantaneous output at a fixed time $T$, while the cumulative dose response $\mathrm{cDR}(u,T)$ captures the total accumulated effect over the time interval $[0,T]$.

It is important to note that monotonicity of $\mathrm{DR}(u,T)$ is not guaranteed. In general, the map $u \mapsto \mathrm{DR}(u,T)$ may be nonmonotone, because of the nonlinear dynamics of the $y$--subsystem. This nonmonotonicity reflects the fact that increasing the input can have competing effects over time, leading to peaks or nontrivial transient behavior. 

However, we will show that nonmonotonicity of $\mathrm{DR}(u,T)$ does not necessarily imply nonmonotonicity of $\mathrm{cDR}(u,T)$: the cumulative response can remain monotone even when the instantaneous response is not.
To study this problem, we analyze the sensitivity of $y_u(t)$ with respect to the input $u$ and derive an integral representation of $\partial_u \mathrm{cDR}(u,T)$. This representation will allow us to identify structural conditions under which monotonicity of $\mathrm{cDR}$ is guaranteed.

\section{On the monotonicity of $\mathrm{cDR}(u,T)$}

In this part, we characterize the monotonicity of the map $u \mapsto \mathrm{cDR}(u,T)$.
First, recall the {\IFFM} system in \eqref{eq:IFFL:general} and its solution $(x_u(t),y_u(t))$ defined on $[0,T]$, with initial condition $(x_0, y_0)$.
Define the sensitivities:
\begin{equation*}
    p_u(t):=\partial_u x_u(t)\in\mathbb{R}^n \;\;\text{and}\;\; q_u(t):=\partial_u y_u(t)\in\mathbb{R}.
\end{equation*}
Then $q_u(t)$ satisfies
\begin{equation}\label{eq:q_general}
    \dot q_u(t)+a_u(t)q_u(t)=g_u(t) \;\;\text{and}\;\; q_u(0)=0,
\end{equation}
where, evaluated along $(x_u(t),y_u(t),u)$,
\begin{equation*}
    a_u(t):=-\partial_y F(\cdot), \quad g_u(t):=\partial_x F(\cdot)p_u(t) +\partial_u F(\cdot).
\end{equation*}
Define further
\begin{equation}\label{eq:G_general}
    G_u(t):=\int_0^t g_u(s)\,ds,
\end{equation}
and
\begin{equation}\label{eq:lambda_general}
    \lambda_u(t):=\int_t^T \exp\!\left(-\int_t^s a_u(\tau)\,d\tau\right)ds,
    \quad t\in[0,T].
\end{equation}

Notice that monotonicity of the map $u \mapsto \mathrm{cDR}(u,T)$ is equivalent to requiring that $\partial_u \mathrm{cDR}(u,T)$ has a fixed sign for all $T>0$.
Then, the following theorem provides sufficient conditions for this property in \eqref{eq:IFFL:general}.

\begin{theorem}\label{thm:cDR_monotone}
    Consider the system \eqref{eq:IFFL:general}. Let $\mathrm{cDR}(u,T),$ $g_u(t),$ $G_u(t),$ $\lambda_u(t)$ be defined as in \eqref{eq:DR_cDR:def}--\eqref{eq:lambda_general}.
    For $T>0$, if either
    \begin{itemize}
        \item[\emph{(i)}] each of $\lambda_u(t)$ and $g_u(t)$ has a fixed sign for $t\in[0,T]$, independent of $u$, or
        \item[\emph{(ii)}] each of $\dot{\lambda}_u(t)$ and $G_u(t)$ has a fixed sign for $t\in[0,T]$, independent of $u$,
    \end{itemize}
    then $\partial_u \mathrm{cDR}(u,T)$ has a fixed sign.
    In particular, the map $u\mapsto \mathrm{cDR}(u,T)$ is monotonic.
\end{theorem}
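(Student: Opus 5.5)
We will derive an integral representation $\partial_u \mathrm{cDR}(u,T)=\int_0^T \lambda_u(t)g_u(t)\,dt$ and then read off both conditions from it.

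\textbf{Step 1: differentiate under the integral.} Since $F$ is $C^1$ and solutions exist on $[0,T]$, the map $u\mapsto y_u(t)$ is $C^1$ uniformly in $t\in[0,T]$. Differentiation under the integral is therefore justified, and $\partial_u \mathrm{cDR}(u,T)=\int_0^T q_u(t)\,dt$.

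\textbf{Step 2: variation of constants.} Solve \eqref{eq:q_general}. With $\Phi(s,t):=\exp\bigl(-\int_t^s a_u(\tau)\,d\tau\bigr)$, the solution is
\begin{equation*}
q_u(s)=\int_0^s \Phi(s,t)\,g_u(t)\,dt .
\end{equation*}

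\textbf{Step 3: swap the order of integration.} Integrating over $s\in[0,T]$ and applying Fubini on the triangle $0\le t\le s\le T$ gives
\begin{equation*}
\partial_u\mathrm{cDR}(u,T)=\int_0^T g_u(t)\int_t^T\Phi(s,t)\,ds\,dt=\int_0^T \lambda_u(t)\,g_u(t)\,dt .
\end{equation*}

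\textbf{Case (i).} The integrand $\lambda_u g_u$ has a fixed sign on $[0,T]$, so the integral has that sign. Since the signs are independent of $u$, the sign of $\partial_u\mathrm{cDR}$ is the same for all $u$. In fact $\lambda_u>0$ always, because its integrand is a positive exponential, so only the sign of $g_u$ matters.

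\textbf{Case (ii).} Integrate by parts using $G_u'=g_u$, $G_u(0)=0$, and $\lambda_u(T)=0$. Both boundary terms vanish, so
\begin{equation*}
\partial_u\mathrm{cDR}(u,T)=[\lambda_uG_u]_0^T-\int_0^T\dot\lambda_u(t)G_u(t)\,dt=-\int_0^T\dot\lambda_u(t)\,G_u(t)\,dt .
\end{equation*}
Fixed signs of $\dot\lambda_u$ and $G_u$ then fix the sign of the integral, again uniformly in $u$.

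For the integration by parts we need $\lambda_u$ to be $C^1$. Differentiating the definition gives
\begin{equation*}
\dot\lambda_u(t)=-1+a_u(t)\lambda_u(t),
\end{equation*}
which is continuous because $a_u$ is continuous.

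\textbf{Conclusion and main obstacle.} A derivative of constant sign for all $u$ yields monotonicity of $u\mapsto\mathrm{cDR}(u,T)$. The only delicate point is the regularity in Step 1 (differentiability of $y_u$ in $u$ and the interchange of derivative and integral). This follows from standard smooth-dependence theorems for ODEs with $C^1$ right-hand side on the compact interval $[0,T]$, given global existence from the boundedness remark. The rest is bookkeeping.
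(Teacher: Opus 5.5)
Your proposal is correct and follows the paper's proof essentially verbatim. You use variation of parameters for $q_u$ and Fubini on the triangle $\{0\le t\le s\le T\}$ to get $\partial_u\mathrm{cDR}(u,T)=\int_0^T\lambda_u g_u\,dt$, read off case (i) directly, and handle case (ii) by integration by parts with the vanishing boundary terms $G_u(0)=0$ and $\lambda_u(T)=0$; your regularity remarks (differentiation under the integral and $\dot\lambda_u=-1+a_u\lambda_u$) are correct and just make explicit what the paper leaves implicit. One small slip: $\lambda_u>0$ holds only on $[0,T)$, since $\lambda_u(T)=0$ (which you yourself use in case (ii)), but this does not affect the argument.
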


\begin{proof}
By the variation of parameters formula applied to \eqref{eq:q_general},
\begin{equation}\label{eq:q_explicit:general}
    q_u(t)=\int_0^t \exp\!\left(-\int_s^t a_u(\tau)\,d\tau\right)g_u(s)\,ds.
\end{equation}
Since $q_u(t)=\partial_u y_u(t)$, integrating over $[0,T]$ and applying Fubini's theorem $($changing the order of integration over $\{(s,t):0\le s\le t\le T\})$, we obtain
\begin{align}
    \partial_u \mathrm{cDR}(u,T)
    &= \int_0^T \partial_u y_u(t)\,dt = \int_0^T q_u(t)\,dt \nonumber \\
    &= \int_0^T \int_s^T \exp\!\left(-\int_s^t a_u(\tau)\,d\tau\right)dt\, g_u(s)\,ds \nonumber\\
    &= \int_0^T \lambda_u(s) g_u(s)\,ds. \label{eq:int_by_part}
\end{align}
We now consider the two cases using \eqref{eq:int_by_part}. If for all $u\in\mathbb{R}_+$:
\begin{itemize}
    \item[(i)] Both $\lambda_u(t)$ and $g_u(t)$ have a fixed sign on $[0,T]$, then their product
    $\lambda_u(t)g_u(t)$ also has a fixed sign. Therefore, $\partial_u \mathrm{cDR}(u,T)$
    has a fixed sign on $t\in[0,T]$.

    \item[(ii)] Suppose that $\dot\lambda_u(t)\le 0$ on $[0,T]$ and that $G_u(t)$
    has a fixed sign on $[0,T]$.
    Since $g(t)=\dot G(t)$, integration by parts on \eqref{eq:int_by_part} yields
    \begin{equation*}
        \int_0^T \lambda_u(t)\dot G_u(t)\,dt
        = [\lambda_u(t)G_u(t)]_0^T - \int_0^T \dot\lambda_u(t)G_u(t)\,dt,
    \end{equation*}
    where the boundary term $[\lambda_u(t)G_u(t)]_0^T$ vanishes due to $G_u(0)=0$ and $\lambda_u(T)=0$.
    Hence, by assumption (ii), the integrand $-\dot\lambda_u(t)G_u(t)$ has a fixed sign on $[0,T]$.
    Thus, $\partial_u \mathrm{cDR}(u,T)$ has a fixed sign.
\end{itemize}
In either case, $\partial_u \mathrm{cDR}(u,T)$ has a fixed sign.
Therefore, the map $u\mapsto \mathrm{cDR}(u,T)$ is monotone.
\end{proof}



Theorem~\ref{thm:cDR_monotone} gives sufficient conditions for monotonicity of the map $u\mapsto \mathrm{cDR}(u,T)$, based on a sign structure that holds uniformly with respect to $u$. If this sign structure fails for some values of $u$, monotonicity is no longer guaranteed by that theorem. To establish nonmonotonicity, it is enough to show that $\partial_u \mathrm{cDR}(u,T)$ takes opposite signs at two input values. We formalize this observation in the following theorem.

\begin{theorem}\label{thm:cDR_nonmonotone}
Consider the system \eqref{eq:IFFL:general}. Let $\mathrm{cDR}(u,T),$ $g_u(t),$ $G_u(t),$ $\lambda_u(t)$ be defined as in \eqref{eq:DR_cDR:def}--\eqref{eq:lambda_general}.
Assume that there exist $u_- ,u_+ >0$ such that one of the following holds:
\begin{itemize}
    \item[\emph{(i)}] $\lambda_{u_-}(t)g_{u_-}(t)\ge 0$ and
    $ \lambda_{u_-}(t)g_{u_-}(t)\not\equiv 0,$
    whereas
    $\lambda_{u_+}(t)g_{u_+}(t)\le 0$ and 
    $\lambda_{u_+}(t)g_{u_+}(t)\not\equiv 0$; or

    \item[\emph{(ii)}] $-\dot\lambda_{u_-}(t)G_{u_-}(t)\ge 0$ and $-\dot\lambda_{u_-}(t)\,G_{u_-}(t)\not\equiv 0,$ while
    $-\dot\lambda_{u_+}(t)G_{u_+}(t)\le 0$ and $-\dot\lambda_{u_+}(t)\,G_{u_+}(t)\not\equiv 0$,
\end{itemize}
for all $t\in[0,T]$. Then
\begin{equation}\label{eq:cDR_nonmonotone}
    \partial_u \mathrm{cDR}(u_-,T)> 0,
    \qquad \partial_u \mathrm{cDR}(u_+,T)< 0,
\end{equation}
and the map $u\mapsto \mathrm{cDR}(u,T)$ is not monotone on any interval containing both $u_-$ and $u_+$.
\end{theorem}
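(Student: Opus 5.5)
The plan is to reuse the integral representation \eqref{eq:int_by_part} obtained in the proof of Theorem~\ref{thm:cDR_monotone}, together with its integrated-by-parts form
\begin{equation*}
    \partial_u \mathrm{cDR}(u,T)=\int_0^T \lambda_u(t)g_u(t)\,dt=-\int_0^T \dot\lambda_u(t)G_u(t)\,dt .
\end{equation*}
The second equality holds for every fixed $u$, since the boundary terms vanish because $G_u(0)=0$ and $\lambda_u(T)=0$. Both identities are pointwise in $u$ and require no sign hypothesis. I will therefore evaluate them separately at $u=u_-$ and at $u=u_+$.

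In case (i), the integrand $h_-(t):=\lambda_{u_-}(t)g_{u_-}(t)$ is nonnegative on $[0,T]$ and not identically zero. To pass from ``$\ge 0$ and $\not\equiv 0$'' to a strictly positive integral, I will use continuity. The function $\lambda_u$ is continuous, indeed $C^1$ in $t$, since $a_u$ is continuous. The function $g_u$ is continuous because $F\in C^1$ and $x_u,y_u,p_u$ are continuous. Hence $h_-$ is continuous, nonnegative, and positive at some point, so it is positive on a subinterval of positive length, and $\int_0^T h_->0$. The same argument applied to $-\lambda_{u_+}g_{u_+}$ gives $\partial_u\mathrm{cDR}(u_+,T)<0$. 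Case (ii) is identical, using the second form of the integral with the continuous integrand $-\dot\lambda_u G_u$. Here $\dot\lambda_u(t)=-1+a_u(t)\lambda_u(t)$ is continuous and $G_u$ is $C^1$. This establishes \eqref{eq:cDR_nonmonotone}.

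For the final claim, let $I$ be any interval containing $u_-$ and $u_+$. A $C^1$ function that is nondecreasing on $I$ has derivative $\ge 0$ at every point of $I$, including one-sided derivatives at endpoints, which is enough. That is contradicted at $u_+$. A nonincreasing function is likewise contradicted at $u_-$. So $u\mapsto\mathrm{cDR}(u,T)$ is not monotone on $I$. This step requires $\mathrm{cDR}(\cdot,T)$ to be differentiable in $u$ with derivative $\int_0^T q_u\,dt$. That follows from $C^1$ dependence of solutions on parameters, since $F\in C^1$ and the solutions exist on $[0,T]$, combined with differentiation under the integral over the compact interval $[0,T]$.

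The only delicate point is the strictness: ``$\not\equiv 0$'' must be read for continuous functions, so that it yields positivity on a set of positive measure. I would make this continuity explicit as described above, since it is where the argument differs from Theorem~\ref{thm:cDR_monotone}. Everything else is a direct rereading of that proof at two specific inputs.
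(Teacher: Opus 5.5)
Your proposal is correct and follows the paper's own argument: you evaluate the representation $\partial_u\mathrm{cDR}(u,T)=\int_0^T\lambda_u g_u\,dt=-\int_0^T\dot\lambda_u G_u\,dt$ at $u_-$ and at $u_+$ and read off opposite strict signs. Your continuity argument (which turns ``$\ge 0$ and $\not\equiv 0$'' into a strictly positive integral) and your derivative-based proof of non-monotonicity on any interval containing both points supply details that the paper leaves implicit.
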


\begin{proof}
Recall \eqref{eq:int_by_part}. Thus, under assumption (i), the integrand has a fixed sign for $u=u_-$ and the opposite fixed sign for $u=u_+$, and is not identically zero in either case. Hence, proving \eqref{eq:cDR_nonmonotone}.
Alternatively, integrating by parts as in the proof of Theorem~\ref{thm:cDR_monotone}.
Therefore, under assumption (ii), the integrand $-\dot\lambda_u G_u$ has one fixed sign for $u=u_-$ and the opposite fixed sign for $u=u_+$, and is not identically zero in either case. Hence again, proving \eqref{eq:cDR_nonmonotone}.
Thus $u\mapsto \mathrm{cDR}(u,T)$ cannot be monotone.
\end{proof}


Next, we leverage Theorems~\ref{thm:cDR_monotone} and \ref{thm:cDR_nonmonotone} to analyze the monotonicity of system~\eqref{eq:IFFL:general}, which generalizes the scalar {\IFFM} systems listed in Table~\ref{tab:systems}.

\section{Applications to {\IFFM} systems}

Note that Theorem~\ref{thm:cDR_monotone} reduces the monotonicity problem to the analysis of the sign of $g_u(t)$ and the associated kernel $\lambda_u(a(t))$. This reduction allows for a systematic analysis of the {\IFFM} systems.
In particular, for each system we proceed the algorithm/steps as follows\footnote{We drop the dependence on \((t)\) when no confusion arises.}:
\begin{enumerate}
    \item Compute $p_u=\partial_u x_u$ by solving $\dot p_u=A p_u+b$.

    \item Define $q_u=\partial_u y_u$ and differentiate to obtain \eqref{eq:q_general}
    thereby identifying the functions $a_u$ and $g_u$.

    \item Compute $\lambda_u$ and analyze its qualitative behavior, in particular its sign and monotonicity.

    \item Analyze $g_u$ and $G_u$, in order to determine its sign.
\end{enumerate}

Applying these steps to the {\IFFM} systems listed in Table~\ref{tab:systems}, we obtain the summary in Table~\ref{tab:ag} (see Appendix~\ref{Apx:scalar} for the scalar case and Appendix~\ref{Apx:vector} for the vector case). In the next result, we summarize the behavior of some systems.

\begin{proposition}
    Let $\dot x= -x + u$, $u\in\mathbb{R}_+$, and $y$--dynamics no. $1$, $3$, $6$, and $8$ listed in Table~\ref{tab:systems}. By Theorem~\ref{thm:cDR_monotone}, those systems admit monotonic $\mathrm{cDR}(u,T)$.
\end{proposition}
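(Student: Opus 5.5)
For each of the four systems, the plan is to verify condition (i) of Theorem~\ref{thm:cDR_monotone}. Two facts do most of the work.

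\textbf{The kernel.} The kernel $\lambda_u$ needs no analysis. In \eqref{eq:lambda_general} the integrand $\exp(-\int_t^s a_u\,d\tau)$ is strictly positive for any integrable $a_u$. Hence $\lambda_u(t)>0$ for $t\in[0,T)$ and $\lambda_u(T)=0$, independently of $u$. The whole task therefore reduces to showing that $g_u$ has a fixed sign on $[0,T]$, also independent of $u$.

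\textbf{The intermediate state.} For $\dot x=-x+u$ with a fixed initial condition $x_0\ge 0$ (not depending on $u$), we have
\[
x_u(t)=x_0e^{-t}+u(1-e^{-t}).
\]
The sensitivity equation $\dot p_u=-p_u+1$ with $p_u(0)=0$ gives $p_u(t)=1-e^{-t}$. The key identity is
\[
x_u(t)-u\,p_u(t)=x_0e^{-t}\ \ge 0 .
\]
Every $g_u$ below factors through this quantity.

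\textbf{System-by-system computation.} For each system I will compute $g_u=\partial_xF\,p_u+\partial_uF$; the sign of $a_u$ is irrelevant by the remark on $\lambda_u$.
\begin{itemize}
\item System 1, $F=x/u-y$: $g_u=p_u/u-x_u/u^2=-(x_u-up_u)/u^2=-x_0e^{-t}/u^2\le 0$.
\item System 3, $F=u/x-y$: $g_u=1/x_u-up_u/x_u^2=(x_u-up_u)/x_u^2=x_0e^{-t}/x_u^2\ge 0$.
\item System 6, $F=1-(x/u)y$: $g_u=y_u(x_u-up_u)/u^2=y_u x_0e^{-t}/u^2\ge0$.
\item System 8, $F=1-(u/x)y$: $g_u=-y_u/x_u+up_uy_u/x_u^2=-y_u x_0e^{-t}/x_u^2\le 0$.
\end{itemize}
For systems 6 and 8 the sign additionally uses $y_u(t)\ge0$. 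This follows from forward invariance of the positive orthant stated in the Remark. In every case the sign depends only on the system, not on $u$ or $t$. So condition (i) holds, and $\partial_u\mathrm{cDR}(u,T)=\int_0^T\lambda_ug_u\,ds$ has a fixed sign.

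\textbf{Main obstacle.} The delicate point is well-posedness of the terms involving $1/x$ in systems 3 and 8. These need $x_u(t)>0$ on $[0,T]$, which holds if $x_0>0$. If $x_0=0$, then $x_u(t)>0$ only for $t>0$, and the singularity at $t=0$ must be excluded or handled separately. I would simply assume $x_0>0$, or $y$-dynamics defined away from $x=0$, as the paper's choice of initial conditions suggests. I would also remark that the degenerate case $x_0=0$ gives $g_u\equiv0$, hence $\partial_u\mathrm{cDR}\equiv0$, which is trivially monotone.
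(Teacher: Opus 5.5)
Your proposal is correct and is essentially the paper's proof. You use Theorem~\ref{thm:cDR_monotone}(i) with $\lambda_u\ge 0$ and read off the sign of $g_u$ from the identity $x_u-u\,p_u=x_0e^{-t}$, exactly as in Table~\ref{tab:ag} and Appendix~\ref{Apx:scalar}; your remark that $\lambda_u>0$ on $[0,T)$ whatever the sign of $a_u$ is a harmless sharpening of the paper's appeal to $a_u\ge 0$. The one quibble is your closing aside on $x_0=0$, which holds for systems 1 and 6 but not for systems 3 and 8: there the coefficient $u/x_u=1/(1-e^{-t})\sim 1/t$ is not integrable at $t=0$, so no solution through $y_0=1$ exists, and the assumption $x_0>0$ (which the paper also makes for these two systems) is genuinely required rather than a convenience.
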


\begin{proof}
    From Table~\ref{tab:ag}, $a(t)\ge 0$ in those systems, therefore $\lambda_u(t)\ge 0, \forall t\in[0,T]$. Moreover, $g_u(t)$ has a fixed sign, hence $G_u(t)$. By Theorem~\ref{thm:cDR_monotone} (i), those systems admit monotonic $\mathrm{cDR}(u,T)$.
\end{proof}

We extend this analysis to the corresponding vector versions of cases $1$, $3$, $6$, and $8$ (see Appendix~\ref{Apx:vector}). In the main body of the paper, we provide detailed analysis for cases $3$, $2$, $4$, and $5$. Case $7$ is analogous to case $4$ and is therefore treated in Appendix~\ref{Apx:scalar} and Appendix~\ref{Apx:vector}.
We remark that cases $3$ and $5$ were previously studied in \cite{Ankit_EDS}, albeit only in the scalar setting. The proofs presented here are more streamlined and extend naturally to the vector case.

In the rest of this paper, we refer to systems $5$, $3$, $2$, and $4$ in their general vector form as {\IFFM}1, {\IFFM}2, {\IFFM}3, and {\IFFM}4, respectively.

\begin{table}[h!]
    \centering
    \caption{Summary of $a(t)$, $g(t)$, and $G(t)$ for scalar {\IFFM} systems 
    with $p(t)=1-e^{-t}$, $x(t)\ge 0$ and $y(t)\ge 0$ for all $t\ge 0$.}
    \begin{tabular}{|c|c|c|c|c|c|c|c|}
    \toprule
    No. & System & $a(t)$ & $g(t)$ & $G(t)$ \\ 
    \midrule
    1 &
    $\frac{x}{u}-y$
    & $1$
    & \Red{$-\dfrac{x_0 e^{-t}}{u^2} < 0$}
    & $-\dfrac{x_0}{u^2}(1-e^{-t}) < 0$
    \\[0.8em]
    
    2 &
    \Green{$x-uy$}
    & $u$
    & $p - y$
    & $\displaystyle \int_0^t (p-y)\,ds$
    \\[0.8em]
    
    3 &
    \Green{$\frac{u}{x}-y$}
    & $1$
    & \Red{$\dfrac{x_0 e^{-t}}{x^2} > 0$}
    & $\displaystyle \int_0^t \frac{x_0 e^{-s}}{x(s)^2}\,ds > 0$
    \\[0.8em]
    
    4 &
    \Green{$\frac{1}{x}-\frac{1}{u}y$}
    & $\frac{1}{u}$
    & $-\dfrac{p}{x^2} + \dfrac{y}{u^2}$
    & $\displaystyle \int_0^t \left(-\frac{p}{x^2} + \frac{y}{u^2}\right) ds$
    \\[0.8em]
    
    5 &
    \Green{$u-xy$}
    & $x$
    & $1 - yp$
    & $t - \displaystyle \int_0^t yp\,ds$
    \\[0.8em]
    
    6 &
    $1-\frac{x}{u}y$
    & $\frac{x}{u}$
    & \Red{$\dfrac{x_0 e^{-t}}{u^2}y > 0$}
    & $\displaystyle \int_0^t \frac{x_0 e^{-s}}{u^2}y(s)\,ds$
    \\[0.8em]
    
    7 &
    $\frac{1}{u}-\frac{1}{x}y$
    & $\frac{1}{x}$
    & $\dfrac{yp}{x^2} - \dfrac{1}{u^2}$
    & $\displaystyle \int_0^t \left(\frac{yp}{x^2} - \frac{1}{u^2}\right) ds$
    \\[0.8em]
    
    8 &
    $1-\frac{u}{x}y$
    & $\frac{u}{x}$
    & \Red{$-\dfrac{x_0 e^{-t}}{x^2}y < 0$}
    & $-\displaystyle \int_0^t \frac{x_0 e^{-s}}{x(s)^2}y(s)\,ds$
    \\
    
    \bottomrule
    \end{tabular}
    \label{tab:ag}
\end{table}

\subsection{{\IFFM}1: Monotonicity of $\mathrm{cDR}(u,T)$}
\label{subsec:{\IFFM}1}

Recall the {\IFFM}1 system:
\begin{equation}\label{eq:IFFL1}
    \dot{x}_u=Ax_u+bu,\qquad
    \dot{y}_u=-c^\top x_uy_u+du,
\end{equation}
where $b,c\in\mathbb{R}^n_+$ and $d\in\mathbb{R}_+$. Notice that all {\IFFM} systems share the same $x$-subsystem. Consequently, the sensitivity $p_u=\partial_u x_u$ is identical in all cases.
Define $p_u:=\partial_u x_u\in\mathbb{R}^n$. Differentiating with respect to $u$ yields 
$\dot p_u=A p_u+b$ with $p_u(0)=0$.
Hence, the solution is given by
\begin{equation}\label{eq:p_explicit}
    p_u=\int_0^t e^{A(t-s)}b\,ds = A^{-1}\bigl(e^{At}-I\bigr)b.
\end{equation}
Since $A$ is Metzler, $e^{A(t-s)}\ge 0$ for all $t\ge s$, and since $b\in\mathbb{R}^n_+$, it follows that $p_u\ge 0$ for all $t\ge 0$.
Next, define $q_u:=\partial_u y_u\in\mathbb{R}$. Differentiating with respect to $u$ yields
\begin{equation*}
    \dot q_u=-c^\top p_uy_u - c^\top x_uq_u + d, \quad q_u(0)=0.
\end{equation*}
Thus, $q_u$ satisfies \eqref{eq:q_general} with $a_u=c^\top x_u$ and $g_u=d - c^\top p_uy_u.$

Regarding $\lambda_u$, since $c\in\mathbb{R}^n_+$ and $x_u\ge 0$ for all $t\ge 0$, we have $a_u=c^\top x_u\ge 0$.
Therefore, the exponential factor in the definition of $\lambda_u$ is strictly positive, and
\begin{equation*}
    \lambda_u = \int_t^T \exp\!\left(-\int_t^s c^\top x_u(\tau)\,d\tau\right)ds \ge 0,
    \quad \forall t\in[0,T].
\end{equation*}
Moreover, $\lambda(T)=0$, and for every $t<T$ the interval $[t,T]$ has positive length, hence
$\lambda_u>0$ for all $t<T$.
However, its detailed behavior depends on the trajectory $x_u$. 
In particular, the qualitative behavior of $x_u$ depends on the initial condition $x_0$ relative to the steady state $x_{\mathrm{ss}}(u)$.
For this reason, at this stage we do not attempt to derive an explicit expression for $\lambda_u$ or its derivative.

Furthermore, in the following Lemma\footnote{For simplicity and tractability of the analysis, we assume that the initial conditions satisfy $x_0\in \{-A^{-1}bv \mid v\in[0,\infty)\}, y_0=y_{\mathrm{ss}}$. In simulation, we use general $x_0$ such that $x_0 \notin \operatorname{span}\{-A^{-1}b\}.$}, we show that $g_u$ and $G_u$ remain nonnegative.

\begin{lemma}\label{lem:IFFL1}
    Consider system \eqref{eq:IFFL1}, where $A$ is Metzler and Hurwitz, with $b,c\in\mathbb{R}^n_+$ and $d\in\mathbb{R}_+$. 
    Fix $u\in\mathbb{R}_+$ and let $(x_u,y_u)$ denote the corresponding solution. 
    Assume that $x_0=-A^{-1}bv$ for some $v\in[0,\infty)$
    and $y_0=y_{\mathrm{ss}}$ with $y_{\mathrm{ss}}= d/[c^\top(-A^{-1}b)]$. Define $g_u:=d-c^\top p_uy_u,$
    where $p_u=\partial_u x_u$. Then the following hold for all $t\in[0,T]$:
    \begin{itemize}
        \item if $v\ge u$, then $g_u\ge 0$;
        \item if $v<u$, then $G_u\ge 0$.
    \end{itemize}
\end{lemma}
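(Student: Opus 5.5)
The plan is to make everything explicit using the structure of the initial condition. Set $w:=-A^{-1}b$. Since $A$ is Metzler and Hurwitz, $-A^{-1}\ge 0$, so $w\ge 0$. Set $\kappa:=c^\top w>0$, so that $y_{\mathrm{ss}}=d/\kappa$. With $x_0=wv$, the variation of constants formula together with $\int_0^t e^{A(t-s)}b\,ds=(I-e^{At})w$ gives
\begin{equation*}
x_u(t)=wu+e^{At}w\,(v-u),\qquad p_u(t)=(I-e^{At})w ,
\end{equation*}
where the expression for $p_u$ is just \eqref{eq:p_explicit}. Introduce the scalar function $h(t):=c^\top e^{At}w$. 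Then
\begin{equation*}
c^\top x_u=\kappa u+h\,(v-u),\qquad c^\top p_u=\kappa-h .
\end{equation*}
We also need $h\ge 0$, which holds because $e^{At}\ge 0$ and $c,w\ge 0$. For later use, $\dot h=c^\top e^{At}Aw=-c^\top e^{At}b\le 0$, so $h$ decreases from $\kappa$; this guarantees $0\le c^\top p_u\le\kappa$.

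The key step is a comparison argument for $y_u$ relative to the level $d/\kappa$. Evaluating the $y$-equation on this level gives
\begin{equation*}
\dot y_u\big|_{y=d/\kappa}=du-\bigl(\kappa u+h(v-u)\bigr)\frac{d}{\kappa}=-\frac{d}{\kappa}\,h\,(v-u).
\end{equation*}
If $v\ge u$, this is $\le 0$. Since $y_0=d/\kappa$ and the right-hand side is $C^1$ (so solutions are unique), the set $\{y\le d/\kappa\}$ is forward invariant, and hence $y_u(t)\le d/\kappa$. I would justify this by writing $z=y_u-d/\kappa$, which satisfies the linear equation $\dot z=-c^\top x_u\,z-\frac{d}{\kappa}h(v-u)$ with $z(0)=0$, and applying variation of parameters. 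If $v<u$, the same computation gives $y_u(t)\ge d/\kappa$.

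For the case $v\ge u$, I combine the bound $y_u\le d/\kappa$ with $0\le c^\top p_u\le\kappa$ and $y_u\ge 0$. This gives $c^\top p_u\,y_u\le\kappa\cdot d/\kappa=d$, so $g_u\ge 0$.

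The case $v<u$ is the main obstacle, because $y_u$ now overshoots $d/\kappa$ and $g_u$ can be negative pointwise. My plan is to find an exact expression for $g_u$ in terms of $\dot y_u$. Using $c^\top p_u=\kappa-h$ we have $ug_u=u(d-\kappa y_u)+u\,h\,y_u$, while the $y$-equation reads $\dot y_u=u(d-\kappa y_u)-h(v-u)y_u$. Subtracting gives
\begin{equation*}
g_u=\frac{1}{u}\bigl(\dot y_u+v\,h\,y_u\bigr),\qquad\text{and hence}\qquad G_u(t)=\frac{y_u(t)-y_0}{u}+\frac{v}{u}\int_0^t h(s)\,y_u(s)\,ds .
\end{equation*}
The first term is $\ge 0$ by the comparison step, since $y_u(t)\ge d/\kappa=y_0$. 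The second term is $\ge 0$ because $v,h,y_u\ge 0$. Therefore $G_u\ge 0$ on $[0,T]$. This requires $u>0$, which holds since $u\in\mathbb{R}_+$. The only remaining points to check carefully are the sign of $-A^{-1}$ for a Metzler Hurwitz matrix and the uniqueness-based invariance argument.
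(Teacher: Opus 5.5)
Your proof is correct and follows essentially the same route as the paper. Both arguments use the same comparison of $y_u$ against the level $y_{\mathrm{ss}}=d/\kappa$, with the sign set by $c^\top(x_{\mathrm{ss}}(u)-x_u)=-h\,(v-u)$. For $v\ge u$ both use the same bound $c^\top p_u\,y_u\le \kappa\, y_{\mathrm{ss}}=d$. For $v<u$ both integrate the same identity $g_u=\frac{1}{u}\bigl(\dot y_u+(c^\top e^{At}x_0)\,y_u\bigr)$ to get $G_u\ge 0$.

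Two small points are slightly tidier in your version. Your explicit variation-of-parameters formula for $z=y_u-d/\kappa$ justifies the comparison step directly, where the paper simply appeals to the comparison principle. Using $h=c^\top e^{At}w\ge 0$ directly also replaces the paper's argument via monotonicity of $x_u$ in time.
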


For the case $v\ge u$, we have already established monotonicity, and will revisit this case later.
For the case $v<u$, we show below that $\lambda_u$ is nonincreasing.

\begin{proposition}\label{prop:lambda_{\IFFM}1}
    Assume $v<u$. Let $\lambda_u$ be given by \eqref{eq:lambda_general} with $a_u=c^\top x_u$.
    Then $\dot\lambda_u\le 0$ for all $t\in[0,T]$.
\end{proposition}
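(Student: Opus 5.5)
The plan is to differentiate $\lambda_u$ directly and reduce $\dot\lambda_u\le 0$ to a bound on $a_u\lambda_u$. The bound will follow from the monotonicity in time of $a_u=c^\top x_u$, which we obtain from the structured initial condition $x_0=-A^{-1}bv$.

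\textbf{Step 1 (derivative of $\lambda_u$).} Write
\[
\lambda_u(t)=\int_t^T \exp\!\left(-\int_t^s a_u(\tau)\,d\tau\right)ds .
\]
Differentiating in $t$ gives two contributions. The lower limit of the outer integral contributes $-1$. Differentiating the integrand in $t$ gives $a_u(t)\exp\!\left(-\int_t^s a_u\right)$. Hence
\[
\dot\lambda_u(t)=-1+a_u(t)\lambda_u(t).
\]
So it suffices to prove $a_u(t)\lambda_u(t)\le 1$ on $[0,T]$.

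\textbf{Step 2 (explicit trajectory and monotonicity of $a_u$).} By \eqref{eq:p_explicit}, $p_u(t)=A^{-1}(e^{At}-I)b$. Since $A^{-1}$ commutes with $e^{At}$, we have $e^{At}x_0=-A^{-1}e^{At}b\,v=x_0-v\,p_u(t)$. Therefore
\[
x_u(t)=e^{At}x_0+u\,p_u(t)=x_0+(u-v)\,p_u(t).
\]
Moreover $\dot p_u(t)=e^{At}b\ge 0$, because $A$ is Metzler and $b\in\mathbb{R}^n_+$. With $u-v>0$, each component of $x_u$ is nondecreasing in $t$. Since $c\in\mathbb{R}^n_+$, the function $a_u(t)=c^\top x_u(t)$ is nonnegative and nondecreasing on $[0,T]$. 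This is the step where the hypothesis $v<u$ enters, and I expect it to be the main point requiring care; the rest is routine.

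\textbf{Step 3 (the bound).} Because $a_u$ is nondecreasing, for $s\ge t$ we have $\int_t^s a_u(\tau)\,d\tau\ge a_u(t)(s-t)$.

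If $a_u(t)>0$, this gives
\[
\lambda_u(t)\le\int_t^T e^{-a_u(t)(s-t)}\,ds=\frac{1-e^{-a_u(t)(T-t)}}{a_u(t)}\le \frac{1}{a_u(t)} .
\]
Hence $a_u\lambda_u\le 1$.

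If $a_u(t)=0$, then $a_u(t)\lambda_u(t)=0\le 1$ trivially, and in fact $\dot\lambda_u(t)=-1$.

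In both cases Step 1 yields $\dot\lambda_u(t)\le 0$ for all $t\in[0,T]$, which proves the proposition.
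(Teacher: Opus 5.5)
Your proof is correct and follows essentially the same route as the paper: differentiate to get $\dot\lambda_u=-1+a_u\lambda_u$, use that $x_u=x_0+(u-v)p_u$ is nondecreasing when $v<u$, and bound the exponential kernel by $e^{-(s-t)a_u(t)}$ to obtain $a_u\lambda_u\le 1-e^{-(T-t)a_u(t)}\le 1$. The only differences are cosmetic. You rederive the monotonicity of $x_u$, which the paper imports from the proof of Lemma~\ref{lem:IFFL1}. You also treat $a_u(t)=0$ separately, whereas the paper absorbs that case by multiplying through by $c^\top x_u\ge 0$.
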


We now apply Theorem~\ref{thm:cDR_monotone}.
If \(v\ge u\), then by Lemma~\ref{lem:IFFL1}, $g_u\ge 0, \forall t\in[0,T].$
Since \(\lambda_u\ge 0\), it follows immediately that $\partial_u \mathrm{cDR}(u,T) \ge 0$.
If $v<u$, By Lemma~\ref{lem:IFFL1}, $G_u\ge 0$ for all $t\in[0,T]$, and by Proposition~\ref{prop:lambda_{\IFFM}1}, $\dot\lambda_u\le 0$ on $[0,T]$.
Using the integration by parts from the proof of Theorem~\ref{thm:cDR_monotone},
\begin{equation*}
    \partial_u \mathrm{cDR}(u,T) = -\int_0^T \dot\lambda_uG_u\,dt \ge 0,
\end{equation*}
in which the integrand $-\dot\lambda_uG_u$ has a fixed sign.
Hence, $\partial_u \mathrm{cDR}(u,T)$ has a fixed sign, and therefore the map $u\mapsto \mathrm{cDR}(u,T)$ is monotone for system \eqref{eq:IFFL1}.

\subsection{{\IFFM}2: Monotonicity of $\mathrm{cDR}(u,T)$}
\label{subsec:{\IFFM}2}

Recall the {\IFFM}2 system:
\begin{equation}\label{eq:IFFL2}
    \dot{x}_u=Ax_u+bu,\qquad
    \dot y_u = \frac{\beta u}{K + c^\top x_u} - d y_u,
\end{equation}
where $\beta, K\in\mathbb{R}_+$. We will show that for any fixed $T>0$, the dose response $\mathrm{DR}(u,T)=y_u(T)$ is monotone. This implies that the map $u\mapsto \mathrm{cDR}(u,T)$ is also monotone.
To this end, we show that $\forall t\ge 0$, the map $u\mapsto y_u$ is monotone.

Let $q_u=\partial_u y_u$. Differentiating with respect to $u$ yields 
\begin{equation*}
    \dot q_u = \frac{\beta}{K + c^\top x_u}
    - \frac{\beta uc^\top p_u}{(K + c^\top x_u)^2} - d q_u,
\end{equation*}
with $q_u(0)=0$. Using $c^\top x_u=c^\top e^{At}x_0 + uc^\top p_u$, we obtain the expression of $c^\top p_u$. Substituting into $\dot q_u$ yields
\begin{align}\label{eq:q_diff:{\IFFM}2}
    \dot q_u &= \frac{\beta}{K + c^\top x_u}
    - \frac{\beta\bigl(c^\top x_u - c^\top e^{At}x_0\bigr)}{(K + c^\top x_u)^2} -d q_u \nonumber \\
    &= \frac{\beta\bigl(K + c^\top e^{At}x_0\bigr)}{(K + c^\top x_u)^2} -d q_u.
\end{align}
Since $K\in\mathbb{R}_+$, $e^{At}\ge 0$, $c\in\mathbb{R}^n_+$, and $x_0\ge 0$, we have $\dot q_u \ge -d q_u.$
With $q_u(0)=0$, the comparison principle gives $q_u\ge 0, \forall t\ge 0.$
Therefore $=q_u$ is nondecreasing and hence the map $u\mapsto y_u$ is monotone. This also concludes that the map $u\mapsto \mathrm{cDR}(u,T)$ is also monotone for system \eqref{eq:IFFL2}.

\subsection{{\IFFM}3: Monotonicity of $\mathrm{cDR}(u,T)$}
\label{subsec:{\IFFM}3}

Recall the {\IFFM}3 system:
\begin{equation}\label{eq:IFFL3}
    \dot{x}_u=Ax_u+bu,\qquad
    \dot{y}_u=c^\top x_u-duy_u.
\end{equation}
Let $q_u=\partial_u y_u$. Differentiating with respect to $u$ yields
\begin{equation*}
    \dot q_u=c^\top p_u - dy_u - d uq_u, \quad q_u(0)=0.
\end{equation*}
Thus, $q_u$ satisfies \eqref{eq:q_general} with $a_u=d u$ and $\Tilde g_u=c^\top p_u - dy_u.$
In this case, $\lambda_u$ admits an explicit expression
\begin{equation}\label{eq:lambda:IFFL3}
    \lambda_u=\int_t^T e^{-d u(\tau-t)}\,d\tau=\frac{1-e^{-d u(T-t)}}{d u}.
\end{equation}
Hence $\lambda_u\ge 0$ on $[0,T]$, $\lambda(T)=0$. Since the derivative of \eqref{eq:lambda:IFFL3} is $\dot{\lambda}_u=-e^{-d u (T-t)}$ and $e^{-d u (T-t)}>0$ for all $t\in[0,T]$, it follows that
$\dot{\lambda}_u<0$ for all $t\in[0,T]$.
Thus, $\lambda_u$ is strictly decreasing on $[0,T]$.

For convenient, let $g_u=-\Tilde g_u=dy_u-c^\top p_u$.
Using $p_u(0)=0$ and $y_0=y_\mathrm{ss}$, then $g_u(0)=dy_\mathrm{ss}=c^\top(-A^{-1}b)\ge 0$.
Differentiate $g_u$ and use $x_u=e^{At}x_0 + up_u$ yield
\begin{equation}\label{eq:g_filter}
    \begin{aligned}
    \dot g_u &= d\bigl(c^\top x_u-d u y_u\bigr)-c^\top\dot p_u \\
    \dot g_u+d ug_u &= dc^\top x_u-c^\top\dot p_u-d uc^\top p_u \\
    &= dc^\top e^{At}x_0 - c^\top\dot p_u.
    \end{aligned}
\end{equation}
The next lemma shows that the function $g_u$ by solving \eqref{eq:g_filter} has nonnegative accumulated mass over every time horizon.

\begin{lemma}\label{lem:IFFL3}
    Consider system \eqref{eq:IFFL3}, where $A$ is Metzler and Hurwitz, with $b,c\in\mathbb{R}^n_+$ and $d\in\mathbb{R}_+$. 
    Fix $u\in\mathbb{R}_+$ and let $(x_u,y_u)$ denote the corresponding solution with $x_0\ge 0$ and $y_0=y_{\mathrm{ss}}$ with $y_{\mathrm{ss}}=c^\top(-A^{-1}b)/d$. Define $g_u:=dy_u-c^\top p_u$, where $p_u=\partial_u x_u$.
    Then, for every $t\in[0,T]$, $G_u \ge 0.$
\end{lemma}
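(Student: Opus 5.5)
The plan is to work with $G_u$ directly rather than with $g_u$. Integrating the linear filter \eqref{eq:g_filter} once in time should turn it into a first-order linear equation for $G_u$ whose forcing term is manifestly nonnegative. The conclusion then follows from the variation of parameters formula, exactly as $q_u\ge 0$ was obtained for {\IFFM}2.

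\textbf{Step 1 (integrate the filter).} Since $\dot G_u=g_u$ and $G_u(0)=0$, integrating \eqref{eq:g_filter} over $[0,t]$ gives
\begin{equation*}
\dot G_u(t)+du\,G_u(t)=h_u(t):=g_u(0)+d\,c^\top\!\int_0^t e^{As}x_0\,ds-c^\top p_u(t).
\end{equation*}
Here $c^\top p_u(0)=0$ is used, since $p_u(0)=0$. The initial value computed above is $g_u(0)=dy_{\mathrm{ss}}=c^\top(-A^{-1}b)$.

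\textbf{Step 2 (sign of $h_u$).} By \eqref{eq:p_explicit}, $p_u(t)=A^{-1}(e^{At}-I)b=(-A^{-1})(I-e^{At})b$. Hence
\begin{equation*}
c^\top(-A^{-1}b)-c^\top p_u(t)=c^\top(-A^{-1})e^{At}b,
\end{equation*}
and therefore
\begin{equation*}
h_u(t)=c^\top(-A^{-1})e^{At}b+d\,c^\top\!\int_0^t e^{As}x_0\,ds .
\end{equation*}
Because $A$ is Metzler and Hurwitz, we have $-A^{-1}=\int_0^\infty e^{As}\,ds\ge 0$ and $e^{At}\ge 0$. Combined with $b,c\in\mathbb{R}^n_+$, $x_0\ge 0$ and $d>0$, both terms are nonnegative, so $h_u(t)\ge 0$ for all $t\ge 0$.

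\textbf{Step 3 (conclude).} Variation of parameters with $G_u(0)=0$ gives
\begin{equation*}
G_u(t)=\int_0^t e^{-du(t-s)}h_u(s)\,ds\ge 0\quad\text{for all } t\in[0,T].
\end{equation*}
Note that $u$ enters only through the positive factor $e^{-du(t-s)}$, so the sign holds for every $u\in\mathbb{R}_+$.

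The only delicate point is Step 2. The instantaneous forcing $c^\top e^{At}(dx_0-b)$ in \eqref{eq:g_filter} has no fixed sign, so $g_u$ itself can change sign. The cancellation that makes the argument work appears only after integration: the constant $dy_{\mathrm{ss}}=c^\top(-A^{-1}b)$ exactly absorbs $c^\top p_u$, leaving the nonnegative term $c^\top(-A^{-1})e^{At}b$. This is precisely where the choice $y_0=y_{\mathrm{ss}}$ enters.
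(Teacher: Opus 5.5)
Your proof is correct, but it takes a different route from the paper's.

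\textbf{The paper's route.} It solves the $g$-filter \eqref{eq:g_filter} by variation of parameters, integrates over $[0,T]$, and exchanges the order of integration. This gives $\int_0^T g_u\,dt=\lambda_u(0)g_u(0)+c^\top H(dx_0-b)$ with $H=\int_0^T\lambda_u(s)e^{As}\,ds$. The term $c^\top Hb$ is then controlled by an inequality: the paper uses that $\lambda_u$ is decreasing, so $\lambda_u(s)\le\lambda_u(0)$, and enlarges $\int_0^T e^{As}b\,ds$ to $\int_0^\infty e^{As}b\,ds=-A^{-1}b$.

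\textbf{Your route.} You integrate the filter before solving it, obtaining $\dot G_u+du\,G_u=h_u$. You then show the forcing is pointwise nonnegative through the exact identity $c^\top(-A^{-1}b)-c^\top p_u(t)=c^\top(-A^{-1})e^{At}b=c^\top\int_t^\infty e^{As}b\,ds$.

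\textbf{Comparison.} The two formulas for $G_u$ coincide after Fubini: your $\int_0^t e^{-du(t-s)}h_u(s)\,ds$ expands to exactly the paper's $\lambda_u$-weighted expression with horizon $t$. The real difference is the sign argument.
\begin{itemize}
\item Yours needs neither the monotonicity of $\lambda_u$ nor the truncation estimate.
\item It gives the stronger fact that $e^{dut}G_u(t)$ is nondecreasing, with a forcing independent of $u$.
\item It yields $G_u(t)\ge 0$ for all $t$ at once, whereas the paper proves the sign at the horizon $T$ and implicitly re-runs the argument with $T$ replaced by $t$.
\item The paper's version stays inside the $\lambda_u$-kernel framework of Theorem~\ref{thm:cDR_monotone}.
\end{itemize}
Both arguments rely on $a_u=du$ being constant in time.
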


Finally, recall that $\tilde g_u=-g_u=c^\top p_u-d y_u$, such that $G_u\le 0$ on $[0,T]$. 
Since $\dot\lambda_u=-e^{-du(T-t)}<0$, we have $-\dot\lambda_u>0$.
By Theorem~\ref{thm:cDR_monotone},
\begin{equation*}
    \partial_u \mathrm{cDR}(u,T) = -\int_0^T \dot\lambda_uG_u\,dt \le 0.
\end{equation*}
the map $u\mapsto \mathrm{cDR}(u,T)$ is also monotone for system \eqref{eq:IFFL3}.

\subsection{{\IFFM}4: Nonmonotonicity of $\mathrm{cDR}(u,T)$}

Recall the {\IFFM}4 system:
\begin{equation}\label{eq:IFFL4}
    \dot{x}_u = A x_u + b u, \qquad
    \dot{y}_u = \frac{1}{K + c^\top x_u} - \frac{dy_u}{\beta u},
\end{equation}
Let $q_u=\partial_u y_u$. Differentiating with respect to $u$ yields
\begin{equation*}
    \dot q_u
    = -\frac{c^\top p_u}{(K + c^\top x_u)^2} + \frac{dy_u}{\beta u^2} - \frac{d}{\beta u} q_u,
    \quad q_u(0)=0.
\end{equation*}
Thus, $q_u$ satisfies \eqref{eq:q_general} with $a_u = d/(\beta u)$ and
\begin{equation}\label{eq:g_IFFL4}
    g_u= -\frac{c^\top p_u}{(K + c^\top x_u)^2}+\frac{dy_u}{\beta u^2}.
\end{equation}
In this case, $\lambda_u$ admits an explicit expression
\begin{equation}\label{eq:lambda:IFFL3}
    \lambda_u=\int_t^T e^{-d(\tau-t)/\beta u}\,d\tau=\frac{\beta u}{d}\bigl(1-e^{-d(T-t)/\beta u}\bigr).
\end{equation}
Hence $\lambda_u\ge 0$ on $[0,T]$, $\lambda(T)=0$. Since the derivative of \eqref{eq:lambda:IFFL3} is $\dot{\lambda}_u=-e^{-d(T-t)/\beta u} < 0$ for all $t\in[0,T]$.
Thus, $\lambda_u$ is strictly decreasing on $[0,T]$.

The following lemma, we will show that the map $u\mapsto \mathrm{cDR}(u,T)$ is not monotone using Theorem~\ref{thm:cDR_nonmonotone}.

\begin{lemma}\label{lem:IFFL4}
    Consider system \eqref{eq:IFFL4}, where $A$ is Metzler and Hurwitz, with $b,c\in\mathbb{R}^n_+$ and $d,\beta,K\in\mathbb{R}_+$. 
    Fix $u\in\mathbb{R}_+$ and let $(x_u,y_u)$ denote the corresponding solution. 
    Assume that $x_0=0$ and $y_0=y_{\mathrm{ss}}=\beta/[d(K+c^\top x_0)].$
    Define $g_u$, $G_u$ and $\mathrm{cDR}(u,T)$ as in \eqref{eq:g_IFFL4}, \eqref{eq:G_general}, and \eqref{eq:DR_cDR:def}.
    Then there exist $u_-,u_+>0$ such that
    \begin{itemize}
        \item $\partial_u \mathrm{cDR}(u,T)>0$ for all $0<u<u_-,$ and
    
        \item $\partial_u \mathrm{cDR}(u,T)<0$ for all $u>u_+.$
    \end{itemize}
In particular, the map $u\mapsto \mathrm{cDR}(u,T)$ is not monotone.
\end{lemma}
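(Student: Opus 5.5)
The plan is to use the integral representation \eqref{eq:int_by_part},
\[
\partial_u \mathrm{cDR}(u,T)=\int_0^T \lambda_u(t)g_u(t)\,dt ,
\]
together with the explicit kernel \eqref{eq:lambda:IFFL3}, which satisfies $\lambda_u>0$ on $[0,T)$, and the special initial condition $x_0=0$. With $x_0=0$ we have $x_u=u\,p(t)$, where $p(t):=A^{-1}(e^{At}-I)b\ge 0$ does not depend on $u$. Write $\pi(t):=c^\top p(t)$. Then $0\le \pi(t)\le M:=c^\top(-A^{-1}b)$, $\pi(0)=0$ and $\dot\pi(0)=c^\top b>0$. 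Substituting into \eqref{eq:g_IFFL4} gives
\[
g_u(t)=-\frac{\pi(t)}{(K+u\pi(t))^2}+\frac{d\,y_u(t)}{\beta u^2}.
\]
The two regimes in the statement are handled separately. For small $u$ I will show that the sign condition of Theorem~\ref{thm:cDR_nonmonotone}(i) holds pointwise. For large $u$ I will use an asymptotic argument on the whole integral.

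\emph{Small $u$.} First I need a lower bound on $y_u$. Since $\pi\le M$, we have $\dot y_u\ge \frac{1}{K+uM}-\frac{d}{\beta u}y_u$. The initial value satisfies $y_0=\beta/(dK)\ge \beta u/(d(K+uM))$. The comparison principle therefore gives $y_u(t)\ge \beta u/(d(K+uM))$ for all $t\ge 0$. Hence
\[
g_u(t)\ \ge\ -\frac{M}{K^2}+\frac{1}{u(K+uM)},
\]
which is strictly positive for all $t\in[0,T]$ once $u<u_-$ for a suitable $u_->0$. Because $\lambda_u>0$ on $[0,T)$, the integrand $\lambda_u g_u$ is nonnegative and not identically zero. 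So $\partial_u\mathrm{cDR}(u,T)>0$, as in Theorem~\ref{thm:cDR_nonmonotone}(i).

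\emph{Large $u$.} Here no pointwise sign argument works. Heuristically, $u^2g_u\to -1/\pi(t)+1/K$, and this can be positive wherever $\pi(t)>K$. So I will instead study $u^2\,\partial_u\mathrm{cDR}(u,T)$ as $u\to\infty$.

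For the positive part, $\dot y_u\le 1/K$ gives $y_u\le y_0+T/K$. Together with $\lambda_u\le T$, the term $\int_0^T\lambda_u\,d y_u/\beta\,dt$ is bounded by a constant independent of $u$.

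For the negative part, $\lambda_u(t)\to T-t$ and $u^2\pi/(K+u\pi)^2\to 1/\pi(t)$ for every $t\in(0,T]$, with both factors nonnegative. Fatou's lemma then gives
\[
\liminf_{u\to\infty}\int_0^T\lambda_u\frac{u^2\pi}{(K+u\pi)^2}\,dt\ \ge\ \int_0^T\frac{T-t}{\pi(t)}\,dt=+\infty .
\]
The integral diverges because $\pi(t)\sim (c^\top b)\,t$ near $t=0$, which makes the integrand behave like a nonintegrable $1/t$ singularity. Consequently $u^2\,\partial_u\mathrm{cDR}(u,T)\to-\infty$, and so there is $u_+$ with $\partial_u\mathrm{cDR}(u,T)<0$ for all $u>u_+$.

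I expect this large-$u$ step to be the main obstacle, because both competing terms in $g_u$ are of the same order $u^{-2}$ and the sign is decided only by the logarithmic divergence near $t=0$. That divergence relies essentially on $x_0=0$, which forces $\pi(0)=0$, and on $c^\top b>0$, which holds since $b,c\in\mathbb{R}^n_+$.

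Combining the two regimes, $\partial_u\mathrm{cDR}(\cdot,T)$ takes both signs. Hence $u\mapsto\mathrm{cDR}(u,T)$ is not monotone.
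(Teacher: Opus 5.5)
Your proof is correct, but it takes a different route from the paper in both regimes.

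\textbf{Small $u$.} The paper never examines $g_u$ pointwise. It rewrites $g_u=-\dot y_u/u+K/[u(K+u\,c^\top p_u)^2]$ and integrates to get $G_u=(y_0-y_u)/u+\frac{K}{u}\int_0^t (K+u\,c^\top p_u(s))^{-2}\,ds$. It then uses the \emph{upper} bound $y_u\le y_0$, valid for $u<dKy_0/\beta=1$, to get $G_u\ge 0$. The conclusion follows from the $-\dot\lambda_u G_u$ form in Theorem~\ref{thm:cDR_monotone}(ii).

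You instead use a \emph{lower} bound $y_u\ge \beta u/(d(K+uM))$ together with $\pi/(K+u\pi)^2\le M/K^2$. This gives $g_u>0$ pointwise, i.e.\ route (i). Your route yields the stronger by-product $q_u>0$, so even $\mathrm{DR}(u,T)$ is increasing for small $u$. The paper's identity gives a cleaner explicit threshold. One small fix: your inequality $y_0\ge \beta u/(d(K+uM))$ is equivalent to $u(K-M)\le K$, so you should state the restriction, e.g.\ $u\le 1$.

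\textbf{Large $u$.} The paper uses variation of parameters to derive the pointwise expansion $G_u(t)=-\log u/(u^2c^\top b)+O(u^{-2})$ for each fixed $t>0$, and reads the sign of $\partial_u\mathrm{cDR}$ off it. This identifies the $\log u/u^2$ rate. However, passing from a statement for each fixed $t$ to the sign of $\int_0^T e^{-d(T-t)/\beta u}G_u(t)\,dt$ requires uniformity in $t$ near $t=0$, which the paper does not supply. Its intermediate kernel also has a typo: $e^{-d(T-t)/\beta u}$ should read $e^{-d(t-s)/\beta u}$.

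Your argument bounds the positive part of $u^2\partial_u\mathrm{cDR}$ independently of $u$ and applies Fatou's lemma to the negative part, which diverges because $\int_0^T (T-t)/\pi(t)\,dt=+\infty$. It is shorter and fully rigorous. The price is that it only shows $u^2\partial_u\mathrm{cDR}(u,T)\to-\infty$, without the rate.

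Both arguments rest on the same mechanism. The choice $x_0=0$ forces $\pi(0)=0$ with $\dot\pi(0)=c^\top b>0$, and the resulting $1/t$ singularity is exactly the source of the paper's $\log u$ term.
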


Having analyzed the monotonicity of $\mathrm{cDR}(u,T)$ for the four generalized {\IFFM} systems, we conclude that, except for systems 4 and 7
in Table~\ref{tab:ag}, the behavior of $\mathrm{cDR}(u,T)$ can be fully characterized using Theorem~\ref{thm:cDR_monotone}. In the next section, we complement this analysis with numerical simulations for the four generalized {\IFFM} systems, which further illustrate and validate the theoretical results beyond initial condition assumptions in some of the proofs. 

\section{Numerical Results}
\label{sec:NumResults}

In this section, we illustrate the theoretical results on monotonicity of the cumulative dose response (cDR) using four {\IFFM} systems initialized in three different initial conditions $(x_0,y_0)$. The dose response (DR) and cumulative dose response (cDR) are defined as \eqref{eq:DR_cDR:def}. For each system, we fix the time horizon $T=1.5$, the initial conditions, and sweep the input over $u \in [10^{-3},10^{3}]$, sampled logarithmically. For the four variants, denoted {\IFFM}1--{\IFFM}4, all sharing the same $x$--subsystem $\dot{x}_u=Ax_u+bu$, where
\begin{equation*}
    \begin{aligned}
    A &= \begin{bmatrix}
    -3.0 & 0.8 & 0.0 & 0.0 & 0.0\\
    0.4 & -2.6 & 0.7 & 0.0 & 0.0\\
    0.0 & 0.5 & -2.8 & 0.6 & 0.0\\
    0.0 & 0.0 & 0.4 & -2.3 & 0.7\\
    0.0 & 0.0 & 0.0 & 0.3 & -1.7
    \end{bmatrix}, \qquad
    b &= \begin{bmatrix}1.0 \\ 0.8 \\ 0.9 \\ 0.7 \\ 0.6\end{bmatrix},\qquad
    c &= \begin{bmatrix}0.9 \\ 0.7 \\ 0.8 \\ 0.6 \\ 0.5\end{bmatrix},
    \end{aligned}
\end{equation*}
with $A$ Metzler and Hurwitz, $b,c\ge 0$, and $d=1.2$. For {\IFFM}3 and {\IFFM}4, we also use $\beta=1.5$, $\gamma=0.8$, and $K=0.8$. Here, we consider more general initial conditions $(x_0,y_0)$, which allows us to test the robustness of the monotonicity properties beyond the structured class used in the proofs, i.e. $x_0\in\{-A^{-1}bv, v>0\}$. For the three $x_0$, we design $x_0^{(1)} = \mathbf{0}_5$, $x_0^{(2)} = [0.5, 0.6, 0.7, 0.8, 0.9]^\top$, and also 
$x_0^{(3)} = [2.0, 2.1, 2.3, 2.4, 2.5]^\top$
while for the three $y_0$, we have the following. For {\IFFM}1 and {\IFFM}3, we set $y_0=y_{\mathrm{ss}}$
while for {\IFFM}2 and {\IFFM}4 we use $y_0=\beta/[d(K+c^\top x_0)]$.
Finally, the DR and cDR curves are shown in Figures~\ref{fig:DR} and~\ref{fig:cDR}, respectively.

As for {\IFFM}1 and {\IFFM}3, Figures~\ref{fig:iffl1_DR} and \ref{fig:iffl3_DR} show that the dose response $\mathrm{DR}(u,T)$ is \emph{nonmonotone} with respect to the input $u$ for some initial conditions. However, the corresponding cumulative responses $\mathrm{cDR}(u,T)$ in Figures~\ref{fig:iffl1_cDR} and \ref{fig:iffl3_cDR} are \emph{monotone}. This is fully consistent with the theoretical results: although the instantaneous sensitivity $g_u$ may change sign, the weighted integral in the expression of $\partial_u \mathrm{cDR}$ remains nonnegative due to the positivity and monotonicity properties established in Lemma~\ref{lem:IFFL1}--\ref{lem:IFFL3} and its proposition.

For {\IFFM}2, in Figures~\ref{fig:iffl2_DR} and~\ref{fig:iffl2_cDR}, both DR and cDR are monotone increasing. This reflects the stronger structural property of this configuration, where the sensitivity term $\partial_u y_u$ preserves a fixed sign. In particular, the corresponding $g_u$ remains nonnegative, in agreement with the analytical conditions ensuring monotonicity at both instantaneous and cumulative levels.

Regarding {\IFFM}4, in contrast, Figures~\ref{fig:iffl4_DR} and~\ref{fig:iffl4_cDR} show that both DR and cDR are \emph{nonmonotone}. This confirms the analysis of Theorem~\ref{thm:cDR_nonmonotone} and Lemma~\ref{lem:IFFL4}: the function $g_u$ does not have a fixed sign, and its integral also changes sign, even with $T = 1.5$. Consequently, the monotonicity of cDR is lost. This provides a clear counterexample demonstrating that the structural conditions identified for {\IFFM}1--{\IFFM}3 are not satisfied in this case. For {\IFFM}2 and {\IFFM}4, the simulation for both $\mathrm{DR}(u,T)$ and $\mathrm{cDR}(u,T)$ do not go to steady state $y_{\mathrm{ss}}$ because we set $K\neq 0$.
\begin{figure*}[t!]
    \centering
    \subfloat[\label{fig:iffl1_DR}\scriptsize {\IFFM}1]{\includegraphics[width=0.25\linewidth]{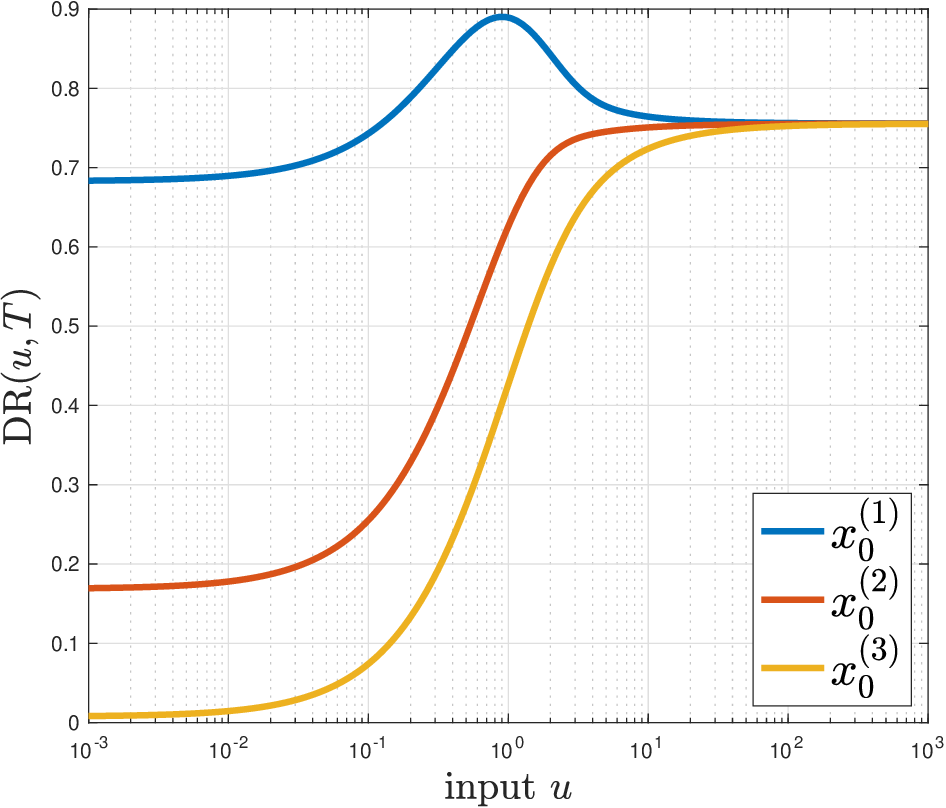}}
    \subfloat[\label{fig:iffl2_DR}\scriptsize {\IFFM}2]{\includegraphics[width=0.25\linewidth]{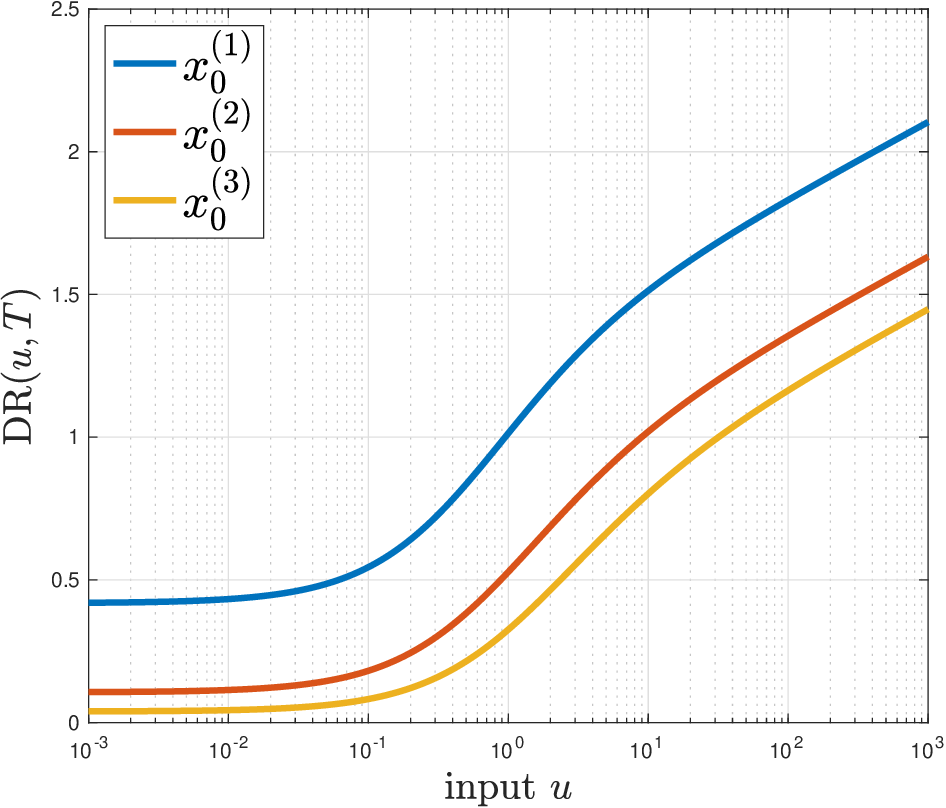}}
    \subfloat[\label{fig:iffl3_DR}\scriptsize {\IFFM}3]{\includegraphics[width=0.25\linewidth]{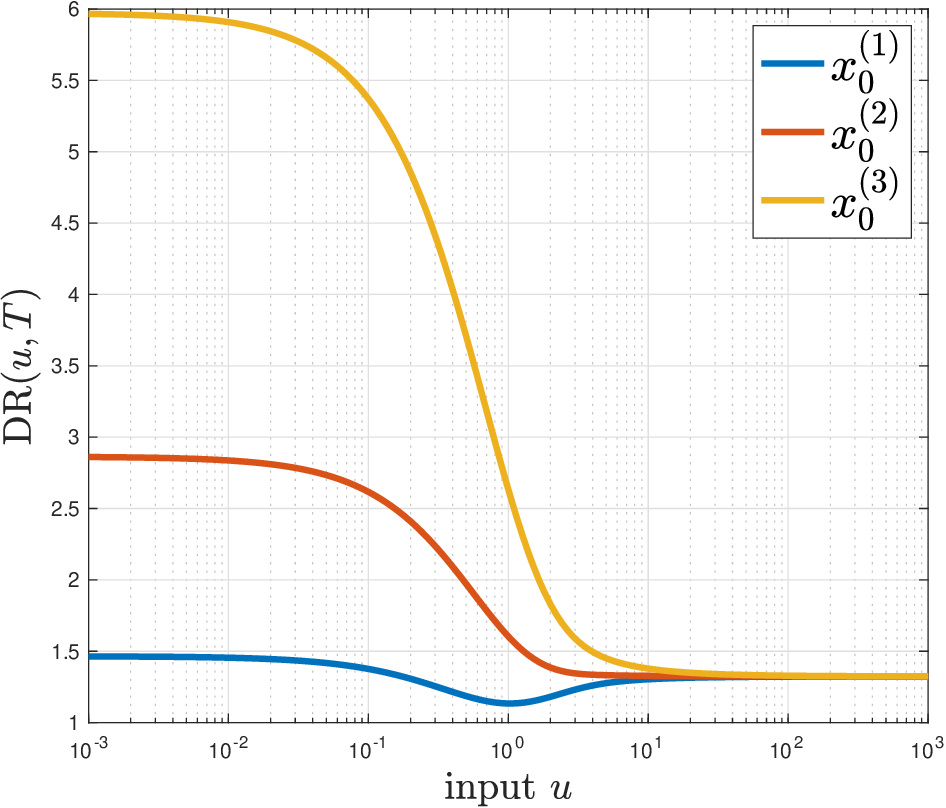}}
    \subfloat[\label{fig:iffl4_DR}\scriptsize {\IFFM}4]{\includegraphics[width=0.25\linewidth]{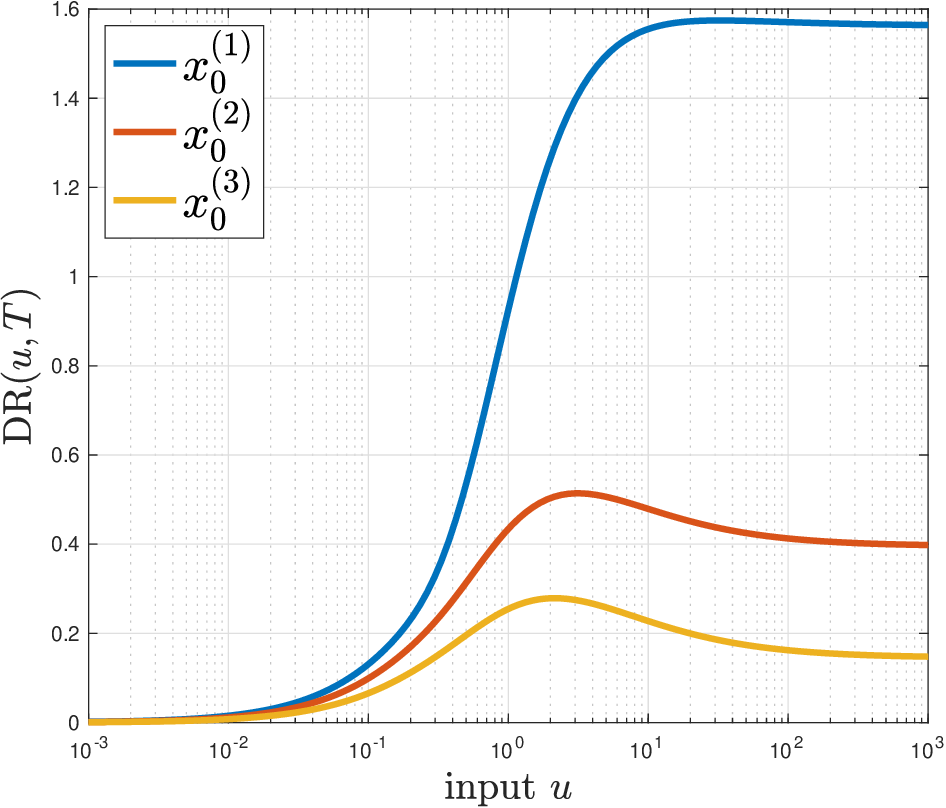}}
    \caption{Dose response $\mathrm{DR}(u,T)$ for the four {\IFFM} systems under three different initial conditions $x_0,y_0$. The input $u$ is varied over $u \in [10^{-3},10^{3}]$. 
    {\IFFM}1 and {\IFFM}3 exhibit nonmonotone DR, while {\IFFM}2 shows monotone behavior. {\IFFM}4 displays nonmonotonicity.}
    \label{fig:DR}
\end{figure*}
\begin{figure*}[t!]
    \centering
    \subfloat[\label{fig:iffl1_cDR}\scriptsize {\IFFM}1]{\includegraphics[width=0.25\linewidth]{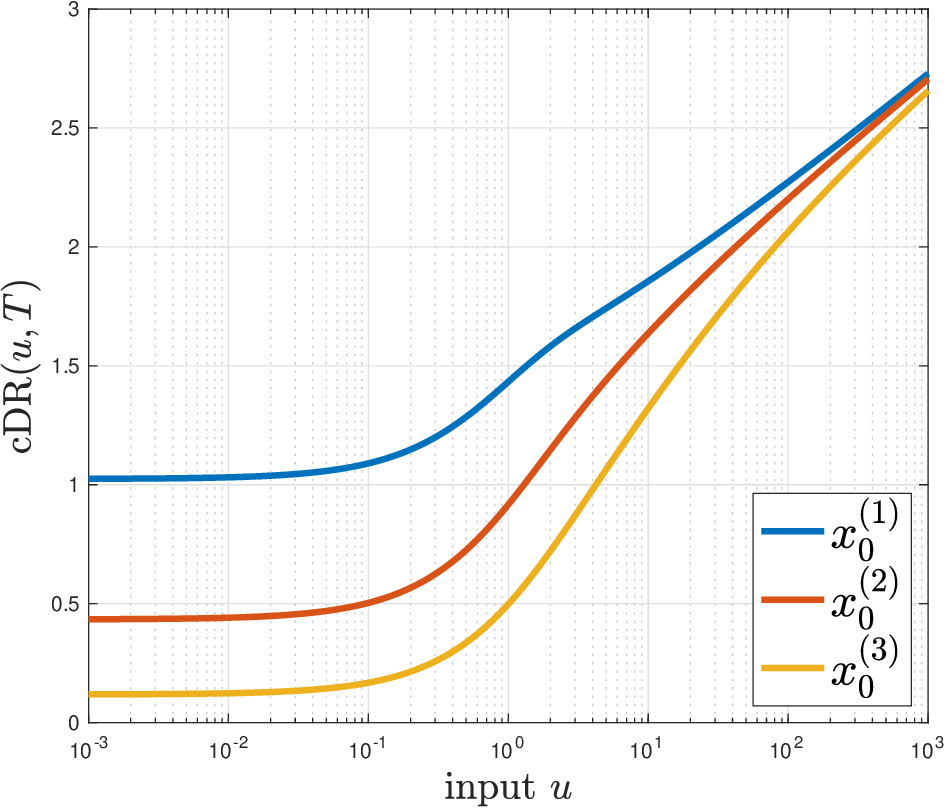}}
    \subfloat[\label{fig:iffl2_cDR}\scriptsize {\IFFM}2]{\includegraphics[width=0.2465\linewidth]{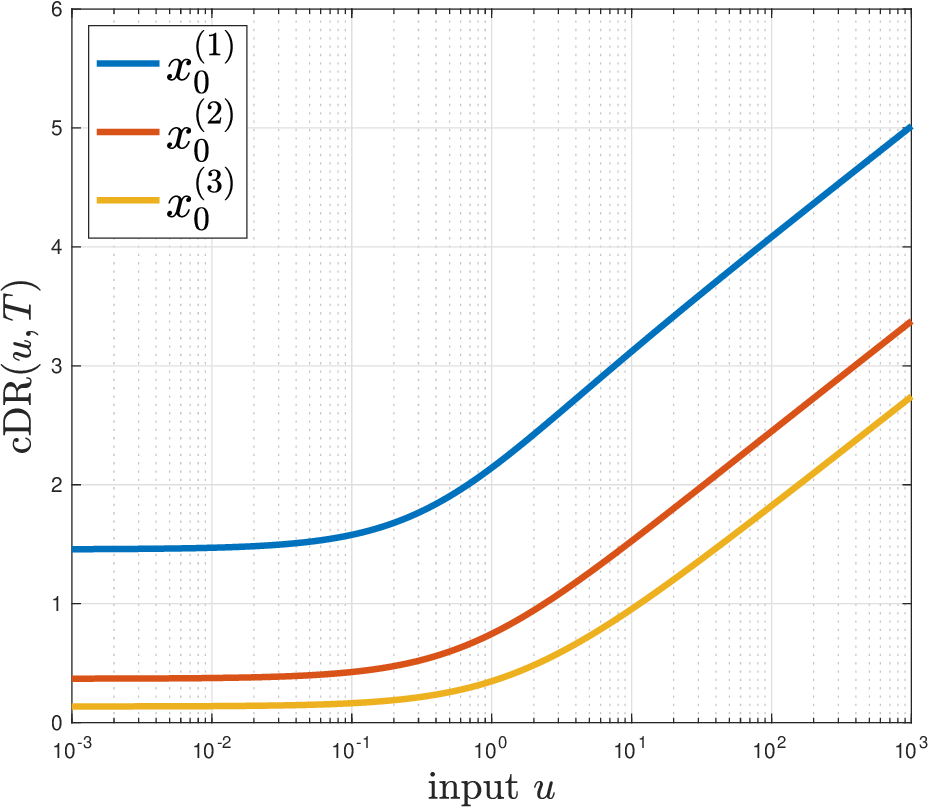}}
    \subfloat[\label{fig:iffl3_cDR}\scriptsize {\IFFM}3]{\includegraphics[width=0.2465\linewidth]{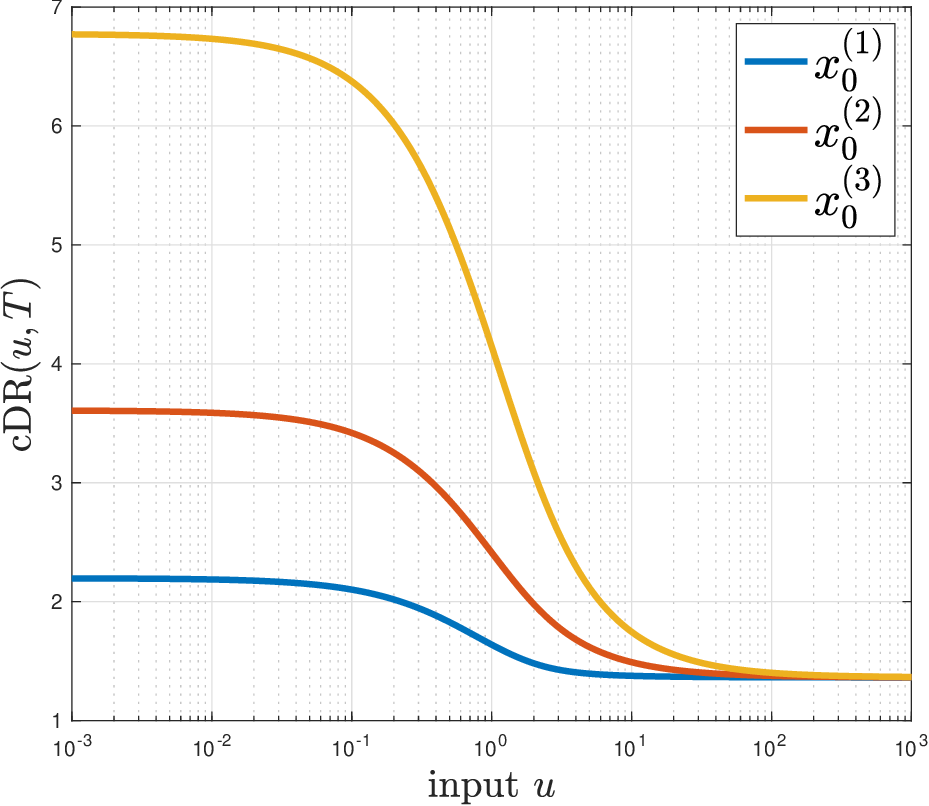}}
    \subfloat[\label{fig:iffl4_cDR}\scriptsize {\IFFM}4]{\includegraphics[width=0.25\linewidth]{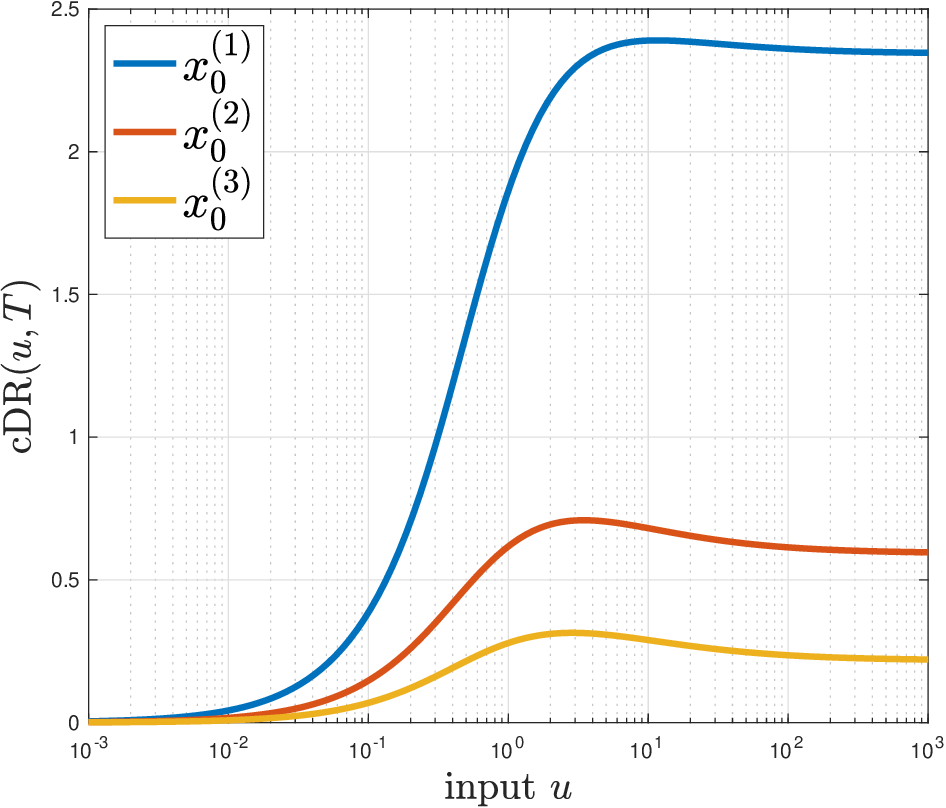}}
    \caption{Cumulative dose response $\mathrm{cDR}(u,T)$ for the four {\IFFM} systems under the same conditions as Figure~\ref{fig:DR}. Despite nonmonotone DR, {\IFFM}1 and {\IFFM}3 exhibit monotone cDR, in agreement with the theoretical results. {\IFFM}2 remains monotone, while {\IFFM}4 loses monotonicity due to the sign-indefiniteness of the sensitivity function $G_u$.}
    \label{fig:cDR}
\end{figure*}
\begin{table}[h!]
    \centering
    \caption{Monotonicity properties of DR and cDR}
    \begin{tabular}{c|c|c|c|c}
    \hline
    System & DR & cDR & $a_u$ & $G_u$\\
    \hline
    {\IFFM}1 & Nonmonotone & Monotone & $\ge 0$ & $\ge 0$\\
    {\IFFM}2 & Monotone & Monotone & $\ge 0$ & $\ge 0$\\
    {\IFFM}3 & Nonmonotone & Monotone & $\ge 0$ & $\le 0$\\
    {\IFFM}4 & Nonmonotone & Nonmonotone & $\ge 0$ & No fixed sign\\
    \hline
    \end{tabular}
    \label{tab:summary}
\end{table}

The behaviors observed in the simulations are summarized in Table~\ref{tab:summary}.
These numerical experiments corroborate the theoretical results and highlight the sharpness of the conditions derived in the previous sections. In particular, we show that monotonicity of cDR is robust under general initial conditions for {\IFFM}1--{\IFFM}3, but can fail when the structural conditions on the $y$--dynamics are violated, as in {\IFFM}4. 

\section{Conclusion}
\label{sec:Conclusion}


We studied monotonicity properties of the cumulative dose response (cDR) for a class of incoherent feedforward {\motifs} ({\IFFM}) systems. We first derived an integral representation of $\partial_u \mathrm{cDR}(u,T)$ and showed that the monotonicity problem can be reduced to verifying sign conditions on a sensitivity term and its associated kernel along system trajectories. This led to a general sufficient condition for monotonicity (Theorem~\ref{thm:cDR_monotone}) and a complementary sufficient condition for non-monotonicity (Theorem~\ref{thm:cDR_nonmonotone}), providing a unified framework to characterize the input–output behavior of these systems.
We then applied this framework to four canonical {\IFFM} systems. Although the dose response (DR) may be non-monotone, we showed that for {\IFFM}1 and {\IFFM}3 the cDR is monotone, as the required sign conditions are satisfied. For {\IFFM}2, monotonicity holds already at the level of DR and therefore also for cDR. In contrast, {\IFFM}4 violates these conditions and, by Theorem~\ref{thm:cDR_nonmonotone}, necessarily exhibits non-monotonic behavior for cDR. These results demonstrate that the proposed conditions are not only sufficient but, within this class of systems, essentially sharp in distinguishing monotone from non-monotone cumulative responses.

Numerical simulations indicate that these properties persist beyond the restricted class of initial conditions used in the analysis. Future work includes extending the framework to broader classes of nonlinear systems, relaxing positivity assumptions, and identifying minimal structural conditions under which monotonicity of cDR can be guaranteed.

\bibliographystyle{ieeetr}  
\bibliography{references}

\appendix

\section{Proofs}
    \subsection*{Proof of Lemma~\ref{lem:IFFL1}}
    
    First, we analyze the relation of $x_u$ and $y_u$ with respect to $x_{\mathrm{ss}}(u)$ and $y_{\mathrm{ss}}$ respectively under two conditions: $v\ge u$ and $ v < u$. 
    
    We start with $x_u$. Since \(x_0=-A^{-1}bv=x_{\mathrm{ss}}(v)\), the solution of the \(x\)-subsystem can be written as
    \begin{equation*}
        x_u=x_0+p_u(u-v).
    \end{equation*}
    From \eqref{eq:p_explicit}, we have $\dot p_u=Ap_u+b=e^{At}b \ge 0, \forall t\ge 0$. Therefore, $\dot x_u=\dot p_u(u-v)=e^{At}b(u-v)$. It follows that \(x_u\) is nonincreasing when \(v\ge u\), and nondecreasing when \(v<u\). In particular, 
    \begin{equation*}
        x_u\in
        \begin{cases}
        [x_{\mathrm{ss}}(u),\,x_0], & \text{if } v\ge u,\\[0.3em]
        [x_0,\,x_{\mathrm{ss}}(u)], & \text{if } v<u,
        \end{cases}
        \qquad \forall t\ge 0.
    \end{equation*}
    
    Next, consider the \(y\)-subsystem in \eqref{eq:IFFL1}.
    Then, evaluating at \(y_u=y_{\mathrm{ss}}\) and using $du=y_{\mathrm{ss}}\,c^\top x_{\mathrm{ss}}(u)$, we obtain
    \begin{equation*}
        \dot y_u\big|_{y_u=y_{\mathrm{ss}}}
        = y_{\mathrm{ss}}\,c^\top\bigl(x_{\mathrm{ss}}(u)-x_u\bigr).
    \end{equation*}
    If \(v\ge u\), then \(x_u\ge x_{\mathrm{ss}}(u)\), 
    yielding $\dot y_u\big|_{y_u=y_{\mathrm{ss}}}\le 0.$ Since \(y_0=y_{\mathrm{ss}}\), the comparison principle yields $y_u\le y_{\mathrm{ss}},\forall t\ge 0$.
    Similarly, if \(v<u\), then \(x_u \le x_{\mathrm{ss}}(u)\), so $\dot y_u\big|_{y_u=y_{\mathrm{ss}}} \ge 0$, and hence $y_u\ge y_{\mathrm{ss}}, \forall t\ge 0$.
    
    Now, using the behaviors of $x_u$ and $y_u$ with respect to $x_{\mathrm{ss}}(u)$ and $y_{\mathrm{ss}}$ respectively, we will prove the claims. 
    
    \begin{itemize}
        \item \textbf{Case $v\ge u$.}
        Since $p_u\le -A^{-1}b$ and $c\in\mathbb{R}^n_+$, we have $c^\top p_u\le c^\top(-A^{-1}b).$
        Together with $y_u\le y_{\mathrm{ss}}$, this yields
        $c^\top p_uy_u \le c^\top(-A^{-1}b)\,y_{\mathrm{ss}} = d.$ Hence,
        \begin{equation*}
            g_u=d-c^\top p_uy_u\ge 0, \quad \forall t\ge 0.
        \end{equation*}
    
        \item \textbf{Case $v<u$.}
        Using $x_u=e^{As}x_0+up_u$, we obtain $c^\top x_u=c^\top e^{As}x_0 + u\,c^\top p_u.$
        Substituting this into the $y$-subsystem and rearranging gives
        \begin{equation*}
            g_u=d-c^\top p_uy_u
            = \frac{1}{u}\dot y_u+\frac{1}{u}(c^\top e^{As}x_0)y_u.
        \end{equation*}
        Integrating over $[0,t]$ and using $y_0=y_{\mathrm{ss}}$, we obtain
        \begin{equation*}
            G_u=\frac{1}{u}\bigl(y_u-y_{\mathrm{ss}}\bigr)
            + \frac{1}{u}\int_0^t (c^\top e^{As}x_0)y_u\,ds.
        \end{equation*}
        Since $x_0\ge 0$, $e^{As}\ge 0$, $c\in\mathbb{R}^n_+$, and $y_u\ge 0$, the integral term is nonnegative. Moreover, $y_u\ge y_{\mathrm{ss}}$, hence $G_u\ge 0$ for all $t\in[0,T]$.
    \end{itemize}
    This completes the proof.

    \subsection*{Proof of Proposition~\ref{prop:lambda_{\IFFM}1}}

    Differentiating \(\lambda_u\) gives $\dot\lambda_u = -1+c^\top x_u\lambda_u$.
    By the proof of Lemma~\ref{lem:IFFL1} for $v < u$, \(x_u\) is nondecreasing. Hence for every \(s\in[t,T]\), $c^\top x_u(\tau)\ge c^\top x_u, \forall \tau\in[t,s]$, and therefore
    \begin{equation*}
        \exp\!\left(-\int_t^s c^\top x_u(\tau)\,d\tau\right)
        \le \exp\!\left(-(s-t)c^\top x_u\right).
    \end{equation*}
    Integrating from \(t\) to \(T\) and multiplying by \(c^\top x_u\ge 0\), it follows that
    \begin{equation*}
        \begin{aligned}
        c^\top x_u\lambda_u
        &\le c^\top x_u\int_t^T \exp\!\left(-(s-t)c^\top x_u\right) ds \\
        &= 1-e^{-(T-t)c^\top x_u} \le 1.
        \end{aligned}
    \end{equation*}
    Hence, $\dot\lambda_u=-1+c^\top x_u\lambda_u\le 0$, as claimed.
    
    \subsection*{Proof of Lemma~\ref{lem:IFFL3}}
    
    Recall $g_u=d y_u-c^\top p_u$ and its derivative in \eqref{eq:g_filter} and let $f_u:=d c^\top e^{At}x_0 - c^\top \dot p_u$. From \eqref{eq:p_explicit}, it follows that $\dot p_u=e^{At}b$, yielding $f_u=c^\top e^{At}\,(d x_0-b)$, for all $t\ge 0$. The solution of \eqref{eq:g_filter} is
    \begin{equation*}
        g_u = e^{-dut} g_u(0) + \int_0^t e^{-du(t-s)} f_u \, ds.
    \end{equation*}
    Integrating over $t\in[0,T]$ and exchanging the order of integration on $\{(s,t):0\le s\le t\le T\}$ yields
    \begin{equation}\label{eq:int_g}
        \int_0^T g_u\,dt
        = \lambda_u(0)g_u(0)+\int_0^T \lambda_u f_u \,ds .
    \end{equation}
    Define
    \begin{equation*}
        H(T):=\int_0^T \lambda_u e^{As}\,ds.
    \end{equation*}
    Since $e^{As}\ge 0$ for all $s\ge 0$, and $\lambda_u\ge 0$,
    it follows that $H\ge 0$. Substituting $f_u=c^\top e^{As}(d x_0-b)$ into
    \eqref{eq:int_g} gives
    \begin{equation}\label{eq:intg_split_K}
        \begin{aligned}
        \int_0^T g_u\,dt
        &=\lambda_u(0)g_u(0)+c^\top H(d x_0-b) \\
        &= c^\top H(d x_0) + \lambda_u(0)g_u(0)-c^\top H b.
        \end{aligned}
    \end{equation}
    The first term $c^\top H(d x_0)$ is nonnegative because $c,x_0\ge 0$ and $d > 0$.
    It remains to show $\lambda_u(0)g_u(0)-c^\top H b\ge 0$.
    Since $\lambda_u\ge 0$ and $\lambda_u$ is strictly decreasing on $[0,T]$, then $\lambda_u\le \lambda_u(0)$ for all $s\in[0,T]$. Thus, with $b\in\mathbb{R}^n_+$ we get
    \begin{equation*}
        \begin{aligned}
        H b=\int_0^T \lambda_u e^{As}b\,ds
        &\le \lambda_u(0)\int_0^T e^{As}b\,ds \\
        &\le \lambda_u(0)\int_0^\infty e^{As}b\,ds
        = -\lambda_u(0)A^{-1}b.
        \end{aligned}
    \end{equation*}
    Multiplying by $c^\top$ yields $c^\top H b \le \lambda_u(0)c^\top(-A^{-1}b)$.
    Since $g_u(0)=dy_{\mathrm{ss}}=c^\top(-A^{-1}b)$, then $c^\top H b\le \lambda_u(0)g_u(0)$, i.e.\ $\lambda_u(0)g_u(0)-c^\top H b\ge 0$.
    Substituting this into \eqref{eq:intg_split_K} results in $G_u\ge 0$ for $t\in[0,T]$, completing the proof.
    
    \subsection*{Proof of Lemma~\ref{lem:IFFL4}}

    Recall $\dot p_u = Ap_u+b, p_u(0)=0$. Let $r_u:=c^\top p_u\ge 0$ with $r_u(0) = 0$ so that $\dot r_u(0)=c^\top b>0$. 
    Since $x_0=0$, we also have $c^\top x_u=ur_u$.
    
    Recall $-\dot\lambda_u=e^{-d(T-t)/\beta u}>0$ on $[0,T]$. Using $\dot y_u$ in \eqref{eq:IFFL4},
    we then rewrite \eqref{eq:g_IFFL4} as $g_u = -\dot y_u/u + K/[u(K+u r)^2]$.
    Integrating from $0$ to $t$ gives
    \begin{equation}\label{eq:G_rewrite}
        G_u = \frac{y_0-y_u}{u}
        + \frac{K}{u}\int_0^t \frac{ds}{(K+u r(s))^2}.
    \end{equation}
    We now analyze the sign of $\partial_u\mathrm{cDR}(u,T)$ through
    \begin{equation*}
        \partial_u \mathrm{cDR}(u,T) = \int_0^T e^{-d(T-t)/\beta u}G_u(t)\,dt.
    \end{equation*}
    
    \noindent
    \textbf{Case: small $u$.}
    If $u<dKy_0/\beta$, then for every $t\in[0,T]$,
    \begin{equation*}
        \dot y_u\big|_{y_u=y_0}
        = \frac{1}{K+u r(t)}-\frac{dy_0}{\beta u}
        \le \frac1K-\frac{dy_0}{\beta u} <0.
    \end{equation*}
    Since $y_u(0)=y_0$, the comparison principle yields $y_u\le y_0$, $t\in[0,T].$
    Hence, by \eqref{eq:G_rewrite}, $G_u\ge 0, t\in[0,T].$
    Moreover, $G_u\not\equiv 0$ because the second term in \eqref{eq:G_rewrite} is strictly positive.
    Since $-\dot\lambda_u>0$, Theorem~\ref{thm:cDR_monotone} implies
    $\partial_u \mathrm{cDR}(u,T)>0$ for all sufficiently small $u>0$.
    
    \medskip
    \noindent
    \textbf{Case: large $u$.}
    Using variation of parameters formula on $\dot y_u$ and subtracting from $y_0$, we can rewrite $G_u$ in \eqref{eq:G_rewrite} as
    \begin{align*}
        G_u &= \frac{y_0(1-e^{-dt/\beta u})}{u}
        -\frac1u\int_0^t \frac{e^{-d(T-t)/\beta u}}{K+u r(s)}\,ds \\
        &\quad +\frac{K}{u}\int_0^t \frac{ds}{(K+u r(s))^2}.
    \end{align*}
    Since $\dot r_u(0)=c^\top b>0$, then $r(s)\sim (c^\top b)s$ as $s\to 0$.
    It follows that, for each fixed $t>0$,
    \begin{equation*}
        \int_0^t \frac{e^{-d(T-t)/\beta u}}{K+u r(s)}\,ds
        \sim \frac{\log u}{u}\cdot \frac{1}{c^\top b}, \qquad u\to\infty,
    \end{equation*}
    whereas $y_0(1-e^{-dt/\beta u})/u=O(u^{-2})$ and similarly
    \begin{equation*}
        \frac{K}{u}\int_0^t \frac{ds}{(K+u r(s))^2}=O(u^{-2}).
    \end{equation*}
    Therefore
    \begin{equation*}
        G_u(t) = -\frac{\log u}{u^2}\cdot \frac{1}{c^\top b} + O(u^{-2}),
        \qquad u\to\infty.
    \end{equation*}
    In particular, for every fixed $t>0$, one has $G_u(t)<0$ for all sufficiently large $u$.
    Thus, $\partial_u\mathrm{cDR}(u,T)>0$ for small $u$ and $\partial_u\mathrm{cDR}(u,T)<0$ for large $u$, and therefore $u\mapsto \mathrm{cDR}(u,T)$ is not monotone.

\newpage
\section{Monotonicity of $\mathrm{cDR}(u,T)$ for the scalar IFFM systems}
\label{Apx:scalar}

In this appendix, we analyze the monotonicity properties of
$\mathrm{cDR}(u,T)$ for the eight scalar IFFM systems introduced in Table~\ref{tab:systems}.
For each system, we compute the quantities
\begin{equation*}
    a_u(t), \qquad g_u(t), \qquad
    G_u(t):=\int_0^t g_u(s)\,ds,
\end{equation*}
and then apply Theorem~\ref{thm:cDR_monotone} to determine the sign of
$\partial_u\mathrm{cDR}(u,T)$.

For all systems, we consider $\dot x_u=-x_u+u,$ with initial condition $x_u(0)=x_0$, independent of $u$. Differentiating it with respect to $u$ gives $\dot p_u=-p_u+1$ with $p_u(0)=0$
where $p_u:=\partial_u x_u.$ Hence
\begin{equation}\label{eq:xp_explicit}
    p_u=1-e^{-t}, \qquad \textrm{and} \qquad x_u=u+(x_0-u)e^{-t}
\end{equation}

Let $q_u:=\partial_u y_u$. For a scalar system of the form $\dot y_u = F(x_u,y_u,u),$ differentiating with respect to $u$ yields
\begin{equation}\label{eq:q_scalar_general}
    \dot q_u = \partial_x F(x_u,y_u,u)p_u
    +\partial_y F(x_u,y_u,u)q_u +\partial_u F(x_u,y_u,u).
\end{equation}
Rewriting \eqref{eq:q_scalar_general} in the form of $\dot q_u+a_u(t)q_u=g_u(t),$ we identify
\begin{equation*}
    a_u(t)=-\partial_yF(x_u,y_u,u), \qquad
    g_u(t)=\partial_xF(x_u,y_u,u)p_u+\partial_uF(x_u,y_u,u).
\end{equation*}

\begin{subequations}
\paragraph{System 1: $\dot y_u=\dfrac{x_u}{u}-y_u$.}
Here $F(x_u,y_u,u)=(x_u/u)-y_u.$ Thus $\partial_xF=1/u$, $\partial_yF=-1$, and $\partial_uF=-x_u/u^2$.

Therefore, $a_u=1$ and
\begin{equation}\label{eq:sys1:sca:g}
    g_u = \frac{up_u-x_u}{u^2} = \frac{u(1-e^{-t})-u-(x_0-u)e^{-t}}{u^2}
     = \frac{-x_0e^{-t}}{u^2} \le 0,
\end{equation}
$\forall t\ge 0, \forall u > 0$. Hence
\begin{equation}\label{eq:sys1:sca:G}
    G_u = -\frac{x_0}{u^2}\int_0^t e^{-s}\,ds
    = -\frac{x_0}{u^2}(1-e^{-t}) \le 0,
\end{equation}
$\forall t\ge 0, \forall u > 0$. Since $a_u=1$ is constant, the kernel $\lambda_u$ defined in Theorem~\ref{thm:cDR_monotone} is
\begin{equation}\label{eq:sys1:sca:lambda}
    \lambda_u=\int_t^T e^{-(s-t)}\,ds = 1-e^{-(T-t)}.
\end{equation}
In particular, $\lambda_u\ge 0$ for all $t\in[0,T]$, and $\dot\lambda_u=-e^{-(T-t)}<0, \forall t\in[0,T].$
\end{subequations}

\begin{proposition}\label{prop:sys1:sca}
    Consider the scalar System~1: $\dot x_u(t)=-x_u(t)+u$ and $\dot y_u(t)=(x_u(t)/u)-y_u(t)$
    with arbitrary $x_0\ge 0$ and $y_0=y_{\mathrm{ss}}=1$. Then, for every $T>0$,
    $\mathrm{cDR}(u,T)$ is monotone nonincreasing with respect to $u>0$.
\end{proposition}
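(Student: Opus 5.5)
The plan is to apply Theorem~\ref{thm:cDR_monotone}(i) directly, using the quantities already computed in \eqref{eq:sys1:sca:g} and \eqref{eq:sys1:sca:lambda}. By the variation-of-parameters and Fubini argument in the proof of Theorem~\ref{thm:cDR_monotone}, we have the representation \eqref{eq:int_by_part}:
\begin{equation*}
\partial_u \mathrm{cDR}(u,T)=\int_0^T \lambda_u(t)\,g_u(t)\,dt .
\end{equation*}
For System~1, $a_u\equiv 1$, so $\lambda_u(t)=1-e^{-(T-t)}\ge 0$ on $[0,T]$. This holds independently of $u$ and of $x_0$.

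The first step is to verify the sign of $g_u$. Using the explicit solution \eqref{eq:xp_explicit}, $x_u=u+(x_0-u)e^{-t}$ and $p_u=1-e^{-t}$, one obtains $g_u=(up_u-x_u)/u^2=-x_0e^{-t}/u^2\le 0$. The key point is that the $u$-dependent terms cancel exactly: $up_u-x_u=-x_0e^{-t}$. Hence $g_u$ is nonpositive for all $t\ge 0$ and all $u>0$, since $x_0\ge 0$.

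Next, the choice $y_0=y_{\mathrm{ss}}=1$ needs a remark. The initial condition $y_0$ is independent of $u$, so $q_u(0)=0$ and \eqref{eq:q_general} holds as stated. Note also that $y_0$ enters neither $g_u$ nor $a_u$, because $\partial_x F$, $\partial_y F$ and $\partial_u F$ do not involve $y$. The steady-state choice is therefore not actually needed for the argument.

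Combining these steps, the integrand $\lambda_u g_u$ is nonpositive on $[0,T]$, so $\partial_u\mathrm{cDR}(u,T)\le 0$ for every $u>0$. Since $\mathrm{cDR}(\cdot,T)$ is $C^1$ in $u$ by smooth dependence on parameters, the mean value theorem gives that $u\mapsto\mathrm{cDR}(u,T)$ is nonincreasing. As a sanity check, one can compute explicitly: $y_u(t)$ depends on $u$ only through $\int_0^t e^{-(t-s)}x_u(s)/u\,ds$, whose $u$-derivative is $-x_0/u^2$ times a positive function. There is no real obstacle here; the only point requiring care is justifying differentiation under the integral and the interchange in Fubini. Both are routine, because the integrands are continuous on the compact set $[0,T]^2$ and $u>0$.
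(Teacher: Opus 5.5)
Your proposal is correct and is essentially the paper's proof: both invoke Theorem~\ref{thm:cDR_monotone}(i) with $\lambda_u(t)=1-e^{-(T-t)}\ge 0$ and the exact cancellation $g_u=(up_u-x_u)/u^2=-x_0e^{-t}/u^2\le 0$, so $\lambda_u g_u\le 0$ on $[0,T]$. Your side remarks are correct but do not change the argument: $y_0$ plays no role because $a_u$ and $g_u$ do not involve $y$, and in fact $q_u(t)=-x_0te^{-t}/u^2\le 0$, so here even the DR is monotone.
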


\begin{proof}
    The sensitivity $q_u(t)=\partial_u y_u(t)$ satisfies $\dot q_u(t)+q_u(t)=g_u(t)$, where $g_u(t)$ is given in \eqref{eq:sys1:sca:g}.
    From \eqref{eq:sys1:sca:lambda}, $\lambda_u(t)\ge 0$, $\forall t\in[0,T]$, and $g_u(t)\le 0$, $\forall t\ge 0, \forall u > 0$.
    Thus $\lambda_u(t)g_u(t)\le 0$, $\forall t\in[0,T]$.
    By Theorem~\ref{thm:cDR_monotone} (i), $\partial_u\mathrm{cDR}(u,T)\le 0$.
    Hence $u\mapsto \mathrm{cDR}(u,T)$ is monotone nonincreasing.
\end{proof}

\begin{figure}[h!]
    \centering
    \subfloat[\label{Add1a}]{\includegraphics[width=0.23\linewidth]{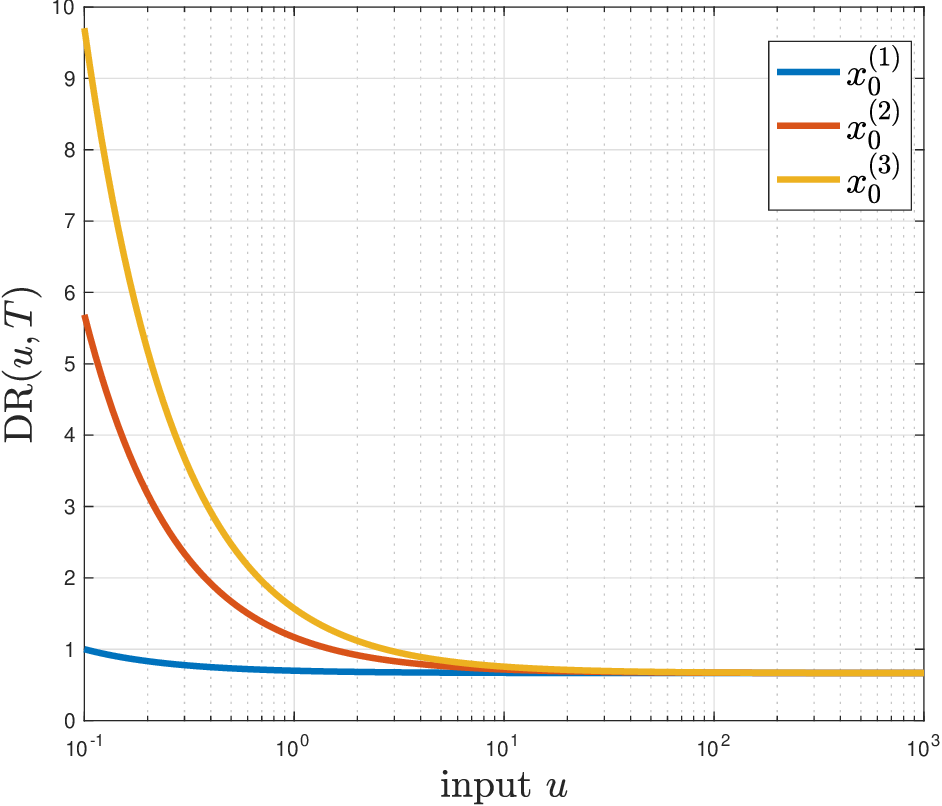}}
    \subfloat[\label{Add1b}]{\includegraphics[width=0.23\linewidth]{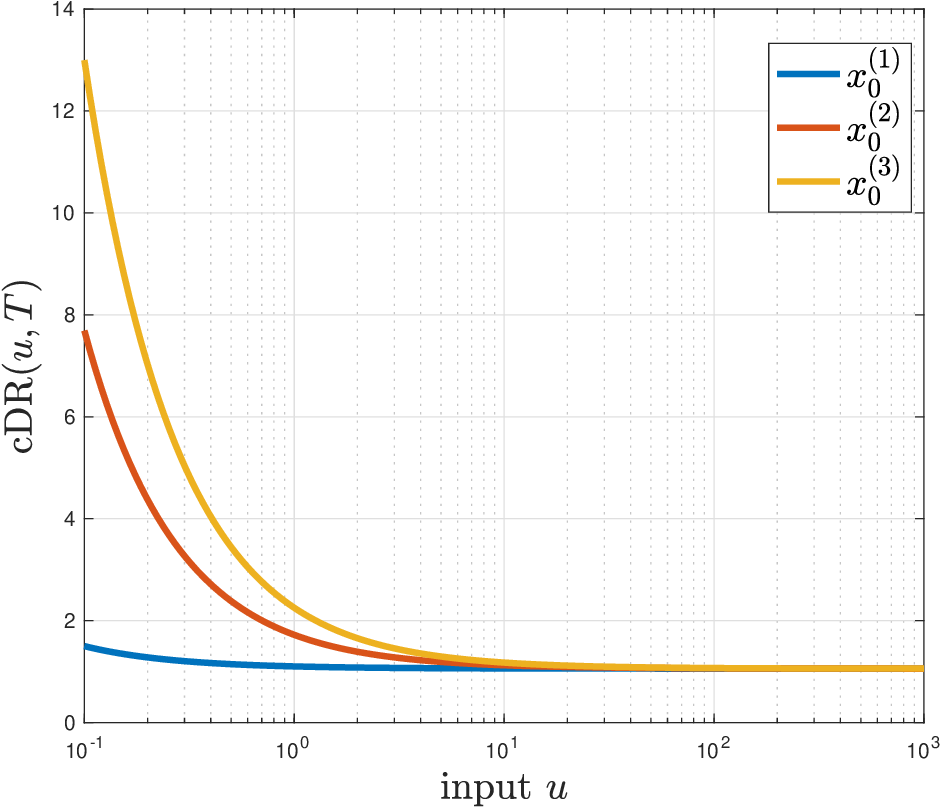}}
    \subfloat[\label{Add1c}]{\includegraphics[width=0.275\linewidth]{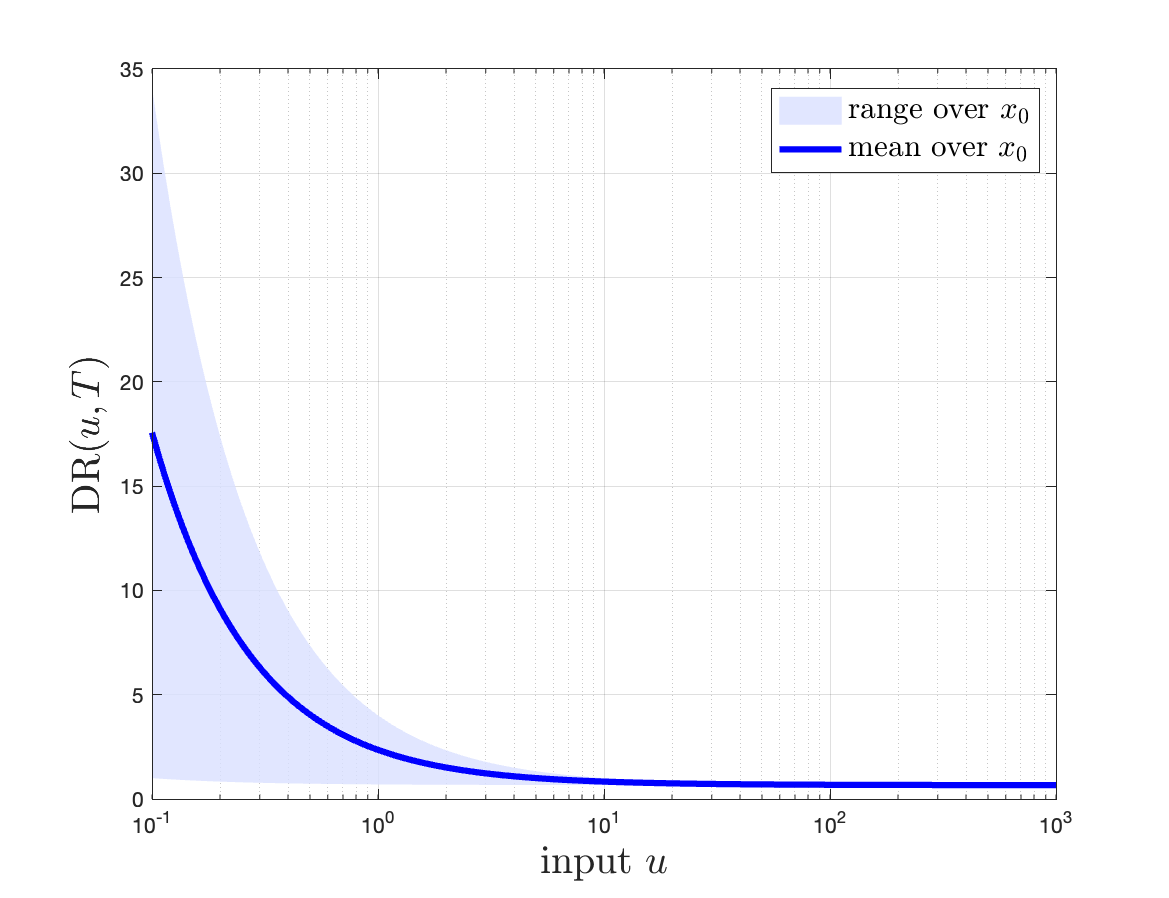}}
    \subfloat[\label{Add1d}]{\includegraphics[width=0.275\linewidth]{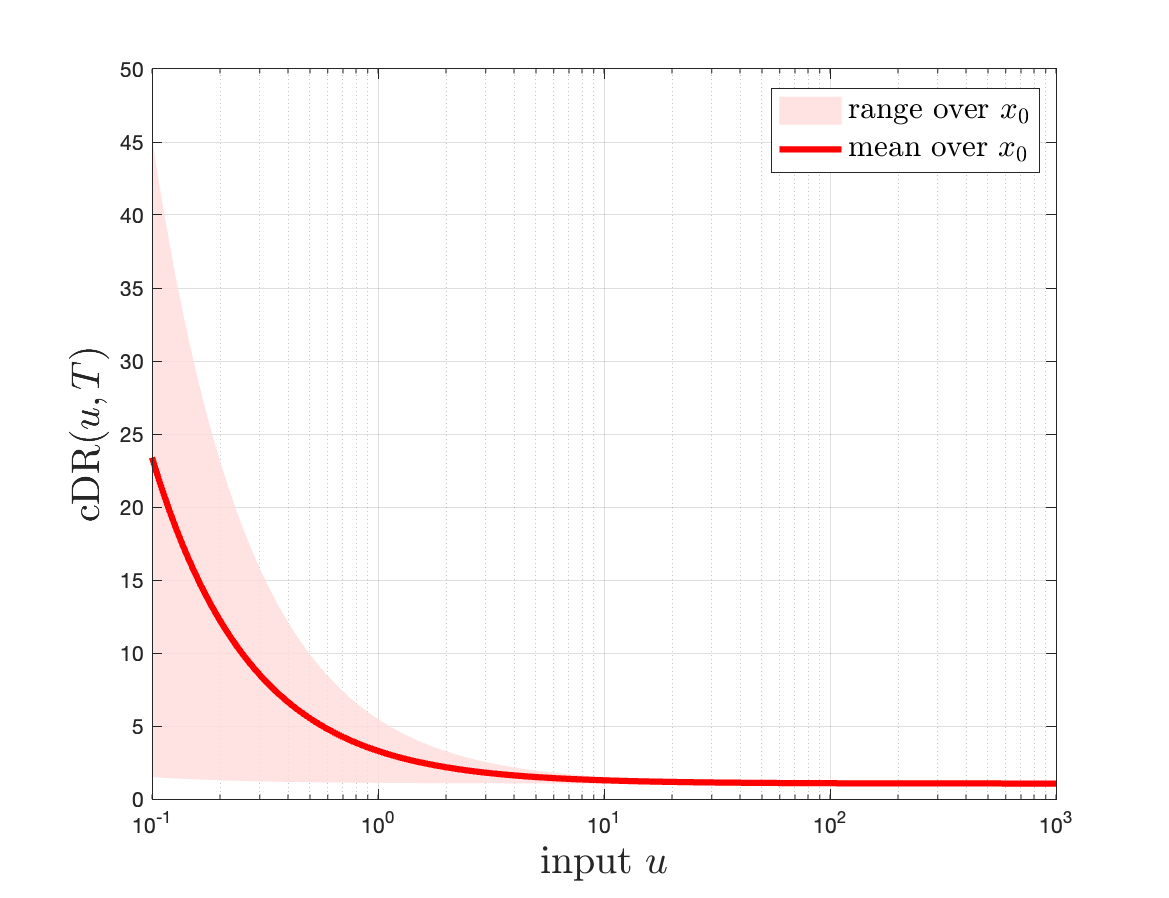}}
    \caption{System 1: Dose response $\mathrm{DR}(u,T)$ and cumulative dose response $\mathrm{cDR}(u,T)$ for $u\in[10^{-1},10^{3}]$. 
    (a)--(b) Responses for three fixed initial conditions $x_0$. 
    (c)--(d) Envelope (min--max band) and mean over $x_0\in[0.1,10]$. 
    The results illustrate that $\mathrm{cDR}(u,T)$ is monotone nonincreasing with respect to $u$, consistently with the theoretical analysis.}
    \label{Add1}
\end{figure}

\begin{subequations}
\paragraph{System 2: $\dot y_u=x_u-u y_u$.}
Here $F(x_u,y_u,u)=x_u-u y_u$. Thus $\partial_xF=1$, $\partial_yF=-u$, and $\partial_uF=-y_u$.

Therefore, $a_u=u$ and $g_u=p_u-y_u$.
Since $a_u=u$ is constant, the kernel $\lambda_u$ defined in Theorem~\ref{thm:cDR_monotone} is
\begin{equation}\label{eq:sys2:sca:lambda}
    \lambda_u=\int_t^T e^{-u(s-t)}\,ds = \frac{1-e^{-u(T-t)}}{u}.
\end{equation}
In particular, $\lambda_u\ge 0$, for all $t\in[0,T]$ and $u > 0$, while the derivative is 
$\dot\lambda_u=-e^{-u(T-t)}<0, \forall t\in[0,T], \forall u > 0$.

Although \(\lambda_u(t)\ge 0\), Theorem~\ref{thm:cDR_monotone} (i) is not directly applicable, since \(g_u=p_u-y_u\) does not have a fixed sign uniformly in \(u\). 
Indeed, using \eqref{eq:xp_explicit} and solving $\dot y_u=x_u-u y_u$ with $y_0=1$ give, for $u\neq 1$,
\begin{equation*}
    g_u=p_u-y_u
    =-e^{-t}+\frac{u-x_0}{u-1}\bigl(e^{-t}-e^{-ut}\bigr).
\end{equation*}
Here $g_u(0)=p_u(0)-y_u(0)=-1<0, \forall u >0$ and for each fixed $t>0$, $\lim_{u\to 0^+}g_u(t)=-e^{-t}-x_0(1-e^{-t})<0.$
On the other hand, as $u\to\infty$,
\begin{equation*}
    g_u=\frac{(1-x_0)e^{-t}}{u-1}+o\!\left(\frac{1}{u}\right).
\end{equation*}
Thus, for instance when $0\le x_0<1$, for each fixed $t>0$, $g_u(t)>0$ for all sufficiently large $u$, while $g_u(t)<0$ for sufficiently small $u$. Therefore, already for $0\le x_0<1$, $g_u$ does not have a fixed sign uniformly in $u$, and Theorem~\ref{thm:cDR_monotone} (i) cannot be used in general. We therefore turn to Theorem~\ref{thm:cDR_monotone} (ii), which only requires the sign of the accumulated quantity $G_u(t)$ together with $\dot\lambda_u(t)<0$.
We now show that $G_u\le 0$, $\forall t\ge 0, \forall u > 0$, assuming $y_0=y_{\mathrm{ss}}=1$ and arbitrary $x_0\ge 0$.
Using \eqref{eq:xp_explicit} and $\dot y_u=x_u-u y_u$, we rewrite $g_u$ as
\begin{equation}\label{eq:sys2:sca:g}
    g_u = p_u-y_u = \frac{1}{u}\dot y_u-\frac{x_0e^{-t}}{u}.
\end{equation}
Integrating \eqref{eq:sys2:sca:g} over $[0,t]$, we obtain
\begin{equation}\label{eq:sys2:sca:G1}
G_u
=
\frac{1}{u}\bigl(y_u-y_0\bigr)
-\frac{x_0}{u}\bigl(1-e^{-t}\bigr).
\end{equation}
Next, using $x_u$ in \eqref{eq:xp_explicit} and $y_0=1$, the $y$-subsystem may be written as
$\dot y_u+u(y_u-1)=(x_0-u)e^{-t}$ with $y_u(0)-1=0.$
By variation of parameters with integrating factor $e^{ut}$,
\begin{equation}\label{eq:sys2:sca:yminus1}
    y_u-1 = (x_0-u)\int_0^t e^{-u(t-s)}e^{-s}\,ds.
\end{equation}
Substituting \eqref{eq:sys2:sca:yminus1} into \eqref{eq:sys2:sca:G1} gives
\begin{align}
    G_u &= \frac{x_0-u}{u}\int_0^t e^{-u(t-s)}e^{-s}\,ds
    -\frac{x_0}{u}\int_0^t e^{-s}\,ds \notag\\
    &= -\int_0^t e^{-u(t-s)}e^{-s}\,ds
    +\frac{x_0}{u}\int_0^t\bigl(e^{-u(t-s)}-1\bigr)e^{-s}\,ds.
\label{eq:G-system2-final}
\end{align}
Since $e^{-u(t-s)}\le 1$ for all $0\le s\le t$, both terms in
\eqref{eq:G-system2-final} are nonpositive. Hence $G_u\le 0$, $\forall t\ge 0, \forall u > 0$.
\end{subequations}

\begin{proposition}\label{prop:sys2:sca}
    Consider the System~2 $\dot x_u(t)=-x_u(t)+u$ and $\dot y_u(t)=x_u(t)-u y_u(t)$
    with arbitrary $x_0\ge 0$ and $y_0=y_{\mathrm{ss}}=1$. Then, for every $T>0$,
    $\mathrm{cDR}(u,T)$ is monotone nonincreasing with respect to $u>0$.
\end{proposition}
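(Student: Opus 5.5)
The plan is to apply Theorem~\ref{thm:cDR_monotone}~(ii), using the two facts established just before the statement. The first fact is the kernel computation \eqref{eq:sys2:sca:lambda}. It gives $\lambda_u(t)=(1-e^{-u(T-t)})/u\ge 0$ and $\dot\lambda_u(t)=-e^{-u(T-t)}<0$ on $[0,T]$ for every $u>0$. Hence $\dot\lambda_u$ has a fixed (negative) sign independent of $u$. The second fact is the representation \eqref{eq:G-system2-final} of $G_u$. Both terms there are nonpositive, since $0\le e^{-u(t-s)}\le 1$ for $0\le s\le t$, $x_0\ge 0$ and $u>0$. So $G_u(t)\le 0$ for all $t\ge 0$ and all $u>0$, which is again a fixed sign independent of $u$.

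With both sign conditions in hand, I will write out the integration-by-parts identity from the proof of Theorem~\ref{thm:cDR_monotone}. First I check that the boundary term vanishes: $G_u(0)=0$ trivially, and $\lambda_u(T)=0$ by \eqref{eq:sys2:sca:lambda}. This gives
\begin{equation*}
\partial_u\mathrm{cDR}(u,T)=\int_0^T\lambda_u\,g_u\,dt=-\int_0^T\dot\lambda_u(t)\,G_u(t)\,dt=\int_0^T e^{-u(T-t)}G_u(t)\,dt\le 0 .
\end{equation*}
This holds for every $u>0$ and every $T>0$, so $u\mapsto\mathrm{cDR}(u,T)$ is nonincreasing on $(0,\infty)$.

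The only delicate point is justifying that $u\mapsto \mathrm{cDR}(u,T)$ is differentiable and that $\partial_u$ commutes with $\int_0^T$. I will handle this by smooth dependence on parameters: $F$ is $C^1$ in $(x,y,u)$, so $y_u(t)$ is $C^1$ in $u$ uniformly on $[0,T]$. The same argument identifies $q_u=\partial_u y_u$ as the solution of \eqref{eq:q_general}.

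A second point is the singular value $u=1$, where the closed form for $g_u$ displayed before the proposition degenerates. I will not use that formula. Instead I rely only on \eqref{eq:sys2:sca:yminus1} and \eqref{eq:G-system2-final}, which are valid for all $u>0$, including $u=1$.

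I do not expect a real obstacle beyond verifying the sign of the second term in \eqref{eq:G-system2-final}, namely $\frac{x_0}{u}\int_0^t(e^{-u(t-s)}-1)e^{-s}\,ds\le 0$, which is immediate.
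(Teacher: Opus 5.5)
Your proposal is correct and matches the paper's proof. Both apply Theorem~\ref{thm:cDR_monotone}~(ii), combining $\dot\lambda_u(t)=-e^{-u(T-t)}<0$ from the explicit kernel with $G_u\le 0$ from \eqref{eq:G-system2-final} to get $\partial_u\mathrm{cDR}(u,T)=\int_0^T e^{-u(T-t)}G_u(t)\,dt\le 0$; your extra remarks on differentiating under the integral and on avoiding the closed form of $g_u$ at $u=1$ are valid details the paper leaves implicit.
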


\begin{proof}
    The sensitivity $q_u(t)=\partial_u y_u(t)$ satisfies $\dot q_u(t)+u q_u(t)=g_u(t)$ where $g_u(t)$ is given by \eqref{eq:sys2:sca:g}.
    From \eqref{eq:sys2:sca:lambda}, $-\dot\lambda_u(t)>0$, $\forall t\in[0,T], \forall u>0$ while by \eqref{eq:G-system2-final}, $G_u\le 0$, $\forall t\ge 0, \forall u > 0$.
    Thus $-\dot\lambda_u(t)G_u(t)\le 0, \forall t\in[0,T].$
    By Theorem~\ref{thm:cDR_monotone} (ii), $\partial_u\mathrm{cDR}(u,T)\le 0$. Hence $u\mapsto \mathrm{cDR}(u,T)$ is monotone nonincreasing.
\end{proof}

\begin{figure}[h!]
    \centering
    \subfloat[\label{sys2a}]{\includegraphics[width=0.23\linewidth]{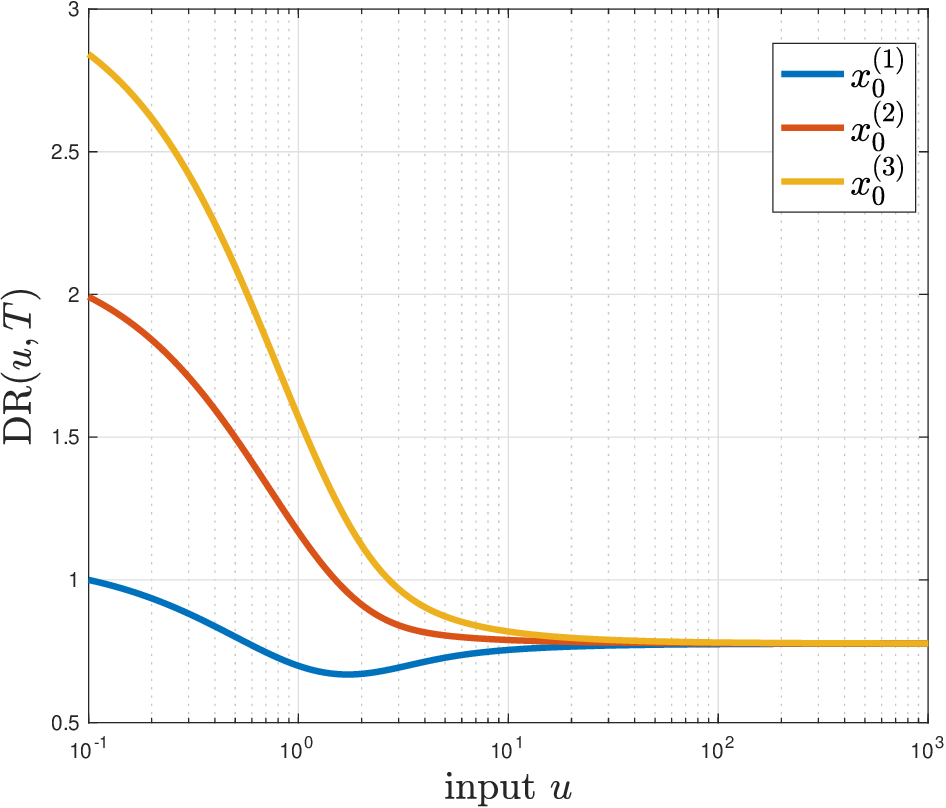}}
    \subfloat[\label{sys2b}]{\includegraphics[width=0.23\linewidth]{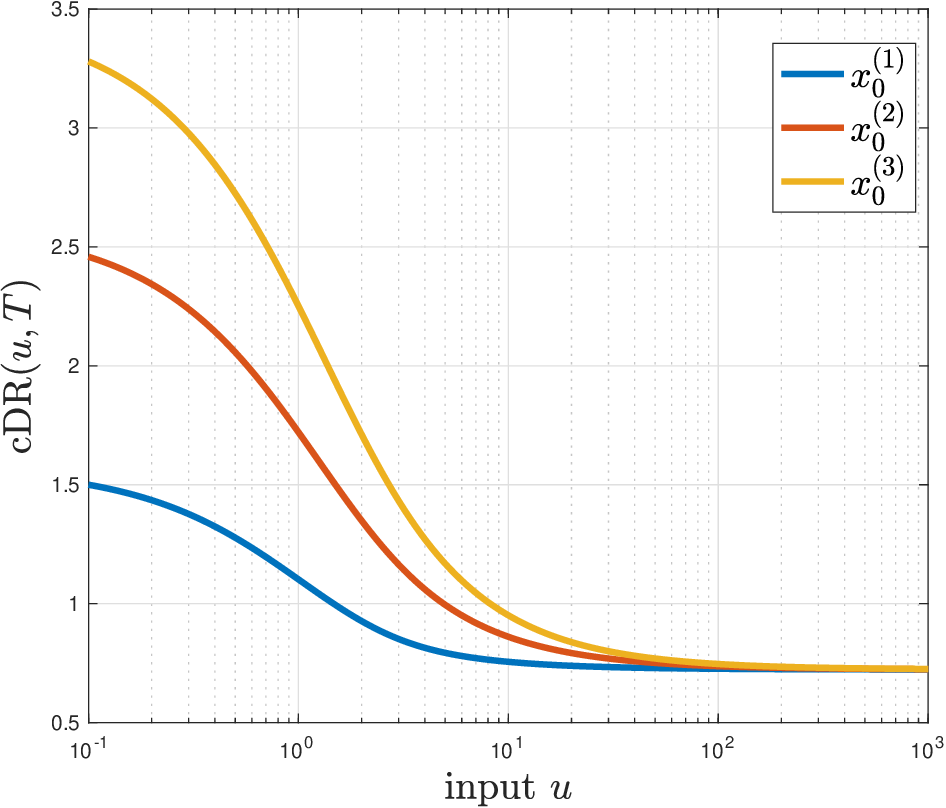}}
    \subfloat[\label{sys2c}]{\includegraphics[width=0.275\linewidth]{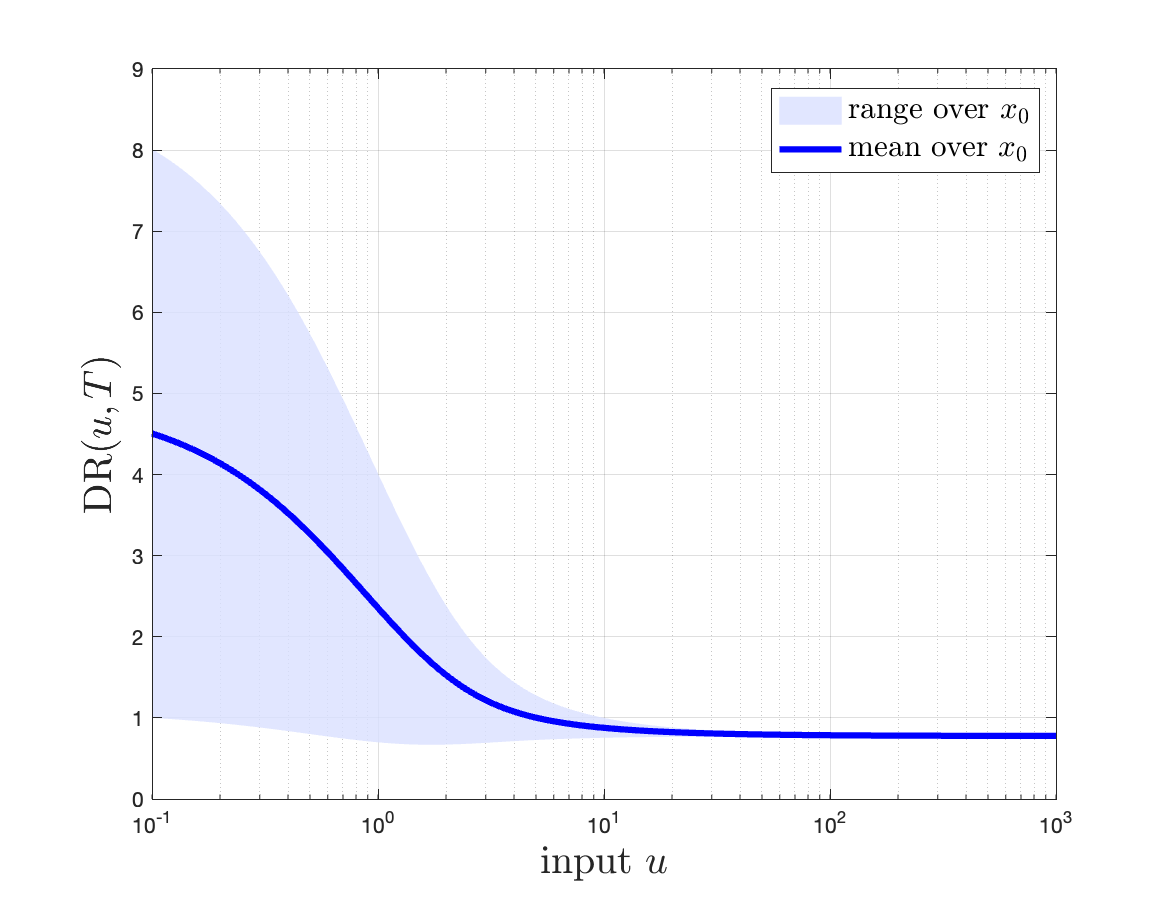}}
    \subfloat[\label{sys2d}]{\includegraphics[width=0.275\linewidth]{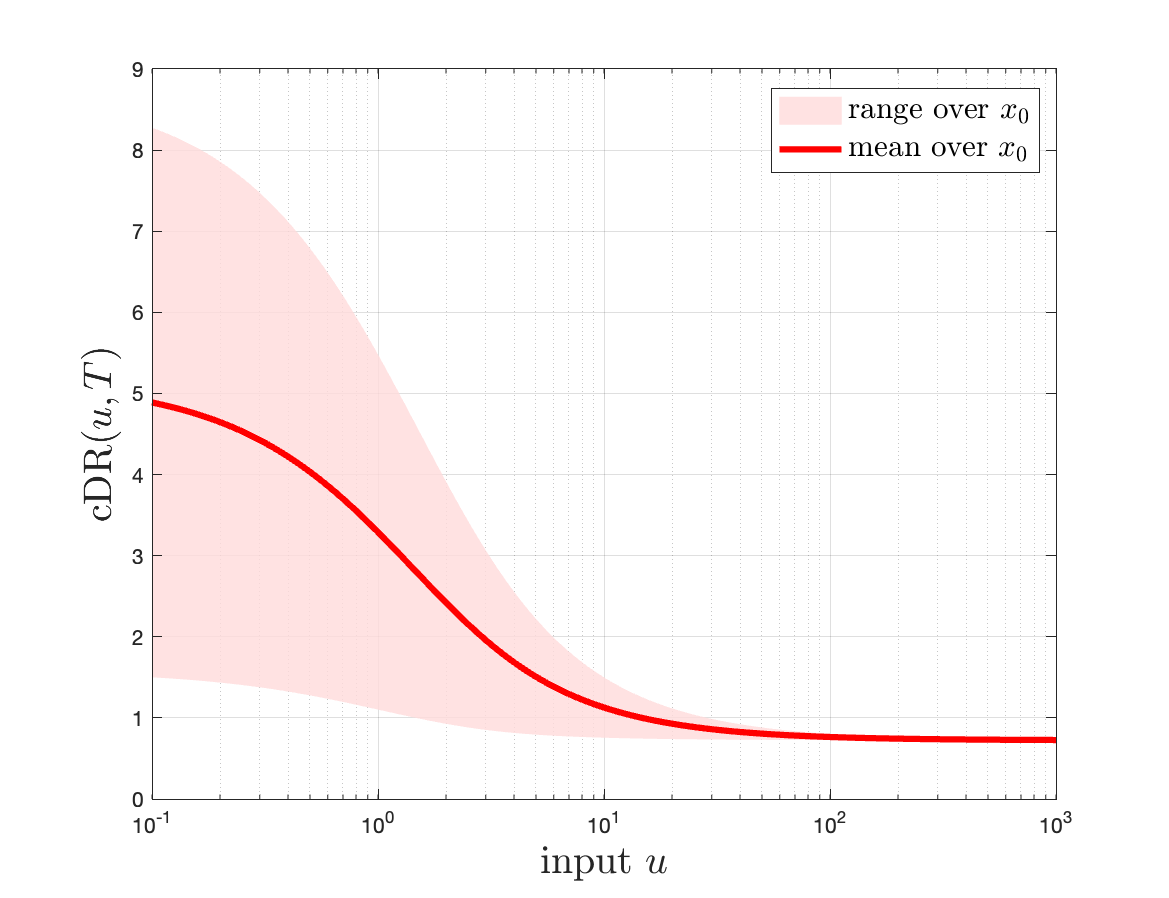}}
    \caption{System 2: Dose response $\mathrm{DR}(u,T)$ and cumulative dose response $\mathrm{cDR}(u,T)$ for $u\in[10^{-1},10^{3}]$. 
    (a)--(b) Responses for three fixed initial conditions $x_0$. 
    (c)--(d) Envelope (min--max band) and mean over $x_0\in[0.1,10]$. 
    The results show that $\mathrm{cDR}(u,T)$ is monotone nonincreasing with respect to $u$.}
    \label{fig:sys2}
\end{figure}

\begin{subequations}
\paragraph{System 3: $\dot y_u=\dfrac{u}{x_u}-y_u$.}
Here $F(x_u,y_u,u)=(u/x_u)-y_u$. Thus $\partial_xF=-u/x_u^2$, $\partial_yF=-1$, and $\partial_uF=1/x_u$.

Therefore, $a_u=1$ and 
\begin{equation}\label{eq:sys3:sca:g}
    g_u =\frac{x_u-up_u}{x_u^2} 
    = \frac{u+(x_0-u)e^{-t}-u(1-e^{-t})}{x_u^2} 
    = \frac{x_0e^{-t}}{x_u^2} \ge 0,
\end{equation}
$\forall t\ge 0, \forall u > 0$. Hence
\begin{equation}\label{eq:sys3:sca:G}
    G_u=\int_0^t \frac{x_0e^{-s}}{x_u(s)^2}\,ds \ge 0,
\end{equation}
$\forall t\ge 0, \forall u > 0$. Since $a_u=1$ is constant, the kernel $\lambda_u$ defined in Theorem~\ref{thm:cDR_monotone} is
\begin{equation}\label{eq:sys3:sca:lambda}
    \lambda_u=\int_t^T e^{-(s-t)}\,ds = 1-e^{-(T-t)}.
\end{equation}
In particular, $\lambda_u\ge 0$ for all $t\in[0,T]$, and $\dot\lambda_u=-e^{-(T-t)}<0, \forall t\in[0,T].$
\end{subequations}

\begin{proposition}\label{prop:sys3:sca}
    Consider the scalar System~3: $\dot x_u(t)=-x_u(t)+u$ and $\dot y_u=(u/x_u(t))-y_u(t)$
    with arbitrary $x_0> 0$ and $y_0=y_{\mathrm{ss}}=1$. Then, for every $T>0$,
    $\mathrm{cDR}(u,T)$ is monotone nondecreasing with respect to $u>0$.
\end{proposition}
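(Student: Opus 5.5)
The plan is to apply Theorem~\ref{thm:cDR_monotone}~(i) directly, mirroring the proof of Proposition~\ref{prop:sys1:sca} but with the opposite sign.

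First, I will record the sensitivity equation. The sensitivity $q_u=\partial_u y_u$ satisfies $\dot q_u+q_u=g_u$ with $q_u(0)=0$, where $g_u$ is given by \eqref{eq:sys3:sca:g}.

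Second, I will check well-posedness. Since $x_0>0$ and $u>0$, the explicit formula \eqref{eq:xp_explicit},
\[
x_u=u+(x_0-u)e^{-t}=ue^{0}(1-e^{-t})+x_0e^{-t},
\]
is a convex combination of $x_0$ and $u$ with weights $e^{-t}$ and $1-e^{-t}$. Hence $x_u(t)\ge\min\{x_0,u\}>0$ for all $t\ge0$. This keeps $u/x_u$ and $1/x_u^2$ smooth and bounded on $[0,T]$, which justifies differentiating under $u$ and the Fubini step in the proof of Theorem~\ref{thm:cDR_monotone}. This is the only point needing care, and the reason the statement asks for $x_0>0$ rather than $x_0\ge0$.

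Third, I will read off the signs. From \eqref{eq:sys3:sca:g}, $g_u(t)=x_0e^{-t}/x_u(t)^2\ge0$ for all $t\ge0$ and all $u>0$; the sign does not depend on $u$. From \eqref{eq:sys3:sca:lambda}, $\lambda_u(t)=1-e^{-(T-t)}\ge0$ on $[0,T]$. Hence $\lambda_u g_u\ge0$ on $[0,T]$, and the representation \eqref{eq:int_by_part} gives
\[
\partial_u\mathrm{cDR}(u,T)=\int_0^T\lambda_u(t)g_u(t)\,dt\ge0
\]
for every $u>0$.

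Finally, integrating this derivative over $u$ shows that $u\mapsto\mathrm{cDR}(u,T)$ is nondecreasing on $(0,\infty)$. In fact the integrand is strictly positive on $[0,T)$ when $x_0>0$, so the monotonicity is strict, though strictness is not needed. No real obstacle is expected beyond the positivity of $x_u$ noted above.
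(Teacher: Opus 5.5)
Your proposal is correct and takes the same route as the paper: both read off $g_u=x_0e^{-t}/x_u^2\ge 0$ and $\lambda_u=1-e^{-(T-t)}\ge 0$ and then invoke Theorem~\ref{thm:cDR_monotone}~(i). Your extra remarks are valid refinements that the paper leaves implicit: $x_u\ge\min\{x_0,u\}>0$ justifies the differentiation and Fubini steps, and the integrand is strictly positive on $[0,T)$, which gives strict monotonicity.
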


\begin{proof}
    The sensitivity $q_u(t)=\partial_u y_u(t)$ satisfies $\dot q_u(t)+q_u(t)=g_u(t)$ where $g_u(t)$ is given in \eqref{eq:sys3:sca:g}.
    From \eqref{eq:sys3:sca:lambda}, $\lambda_u(t)\ge 0$, $\forall t\in[0,T]$, and $g_u(t)\ge 0$, $\forall t\ge 0, \forall u > 0$.
    Thus $\lambda_u(t)g_u(t)\ge 0, \forall t\in[0,T].$
    By Theorem~\ref{thm:cDR_monotone} (i), $\partial_u\mathrm{cDR}(u,T)\ge 0.$
    Hence $u\mapsto \mathrm{cDR}(u,T)$ is monotone nondecreasing.
\end{proof}

\begin{figure}[h!]
    \centering
    \subfloat[\label{sys3a}]{\includegraphics[width=0.23\linewidth]{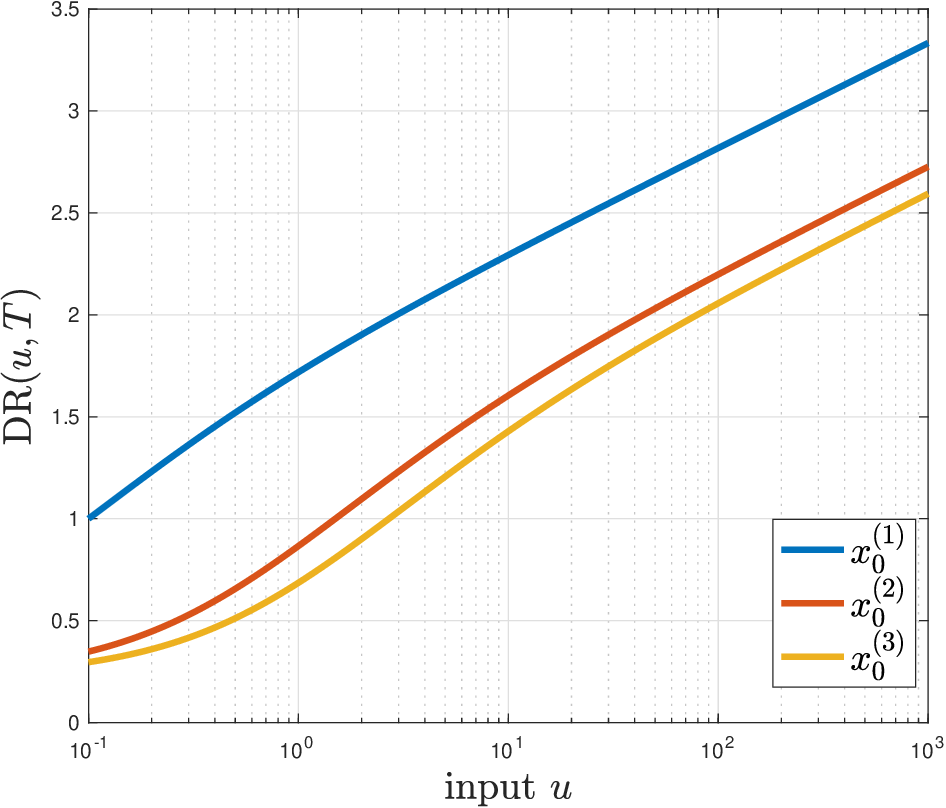}}
    \subfloat[\label{sys3b}]{\includegraphics[width=0.23\linewidth]{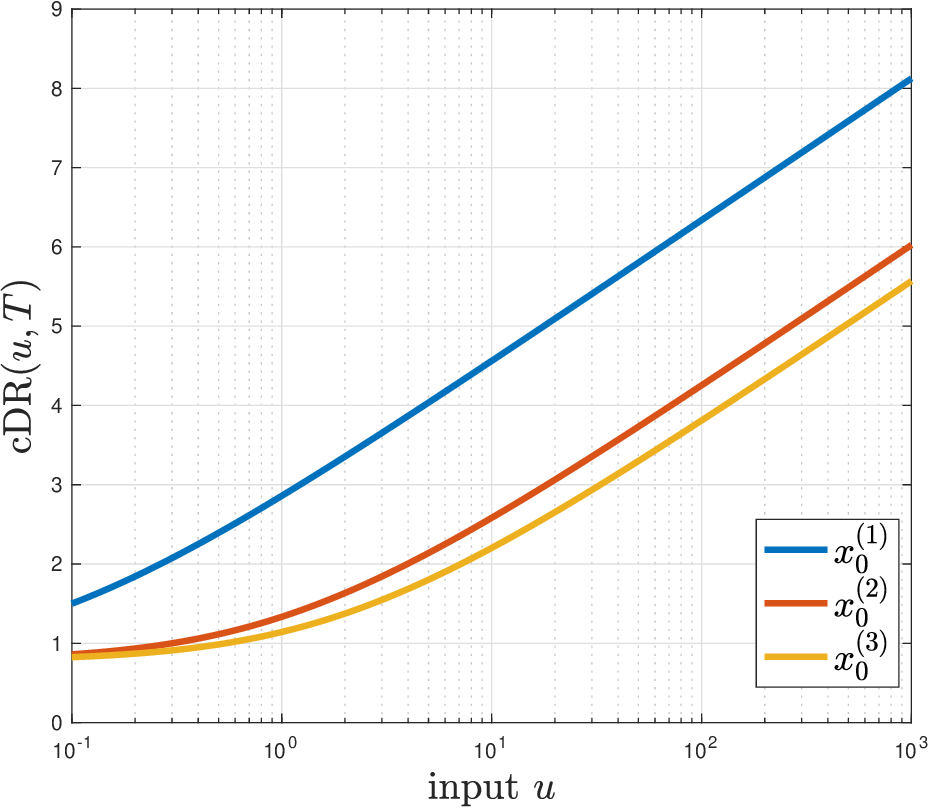}}
    \subfloat[\label{sys3c}]{\includegraphics[width=0.275\linewidth]{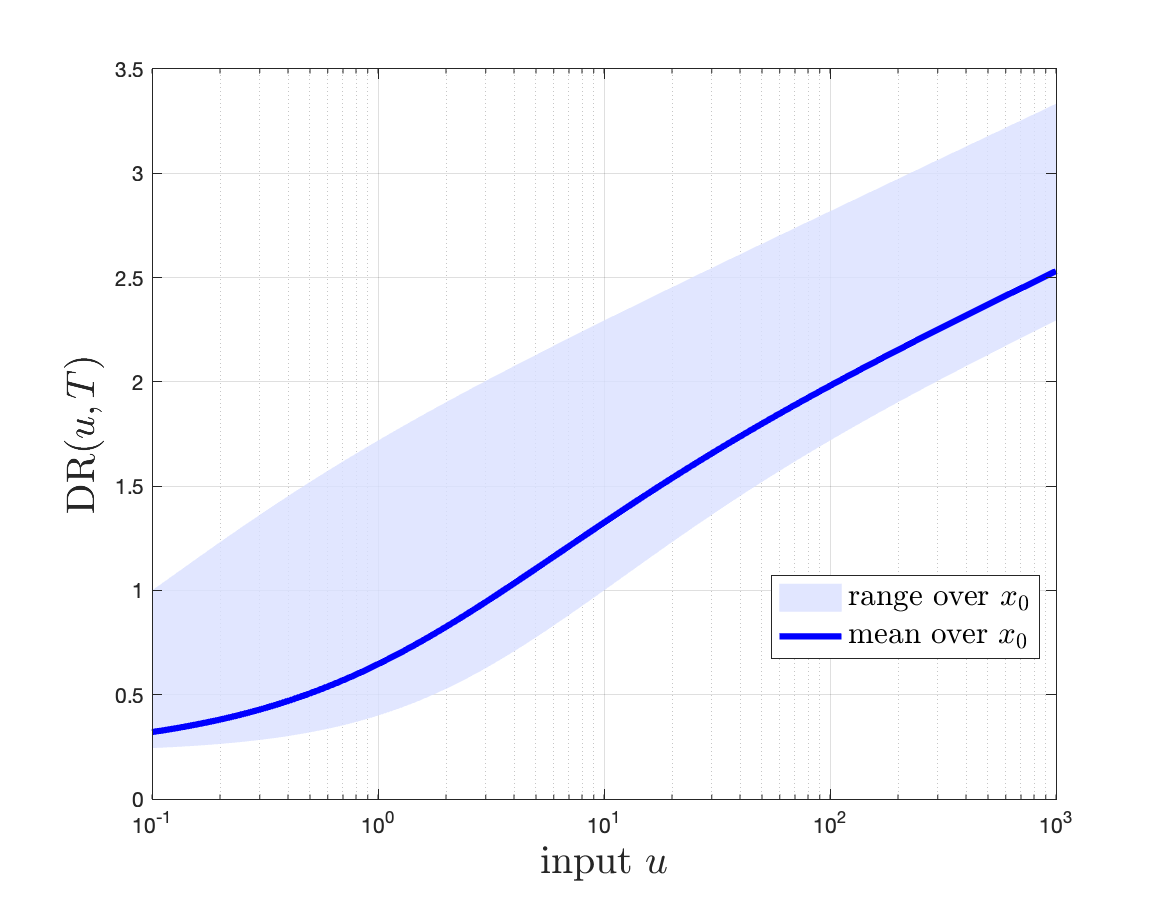}}
    \subfloat[\label{sys3d}]{\includegraphics[width=0.275\linewidth]{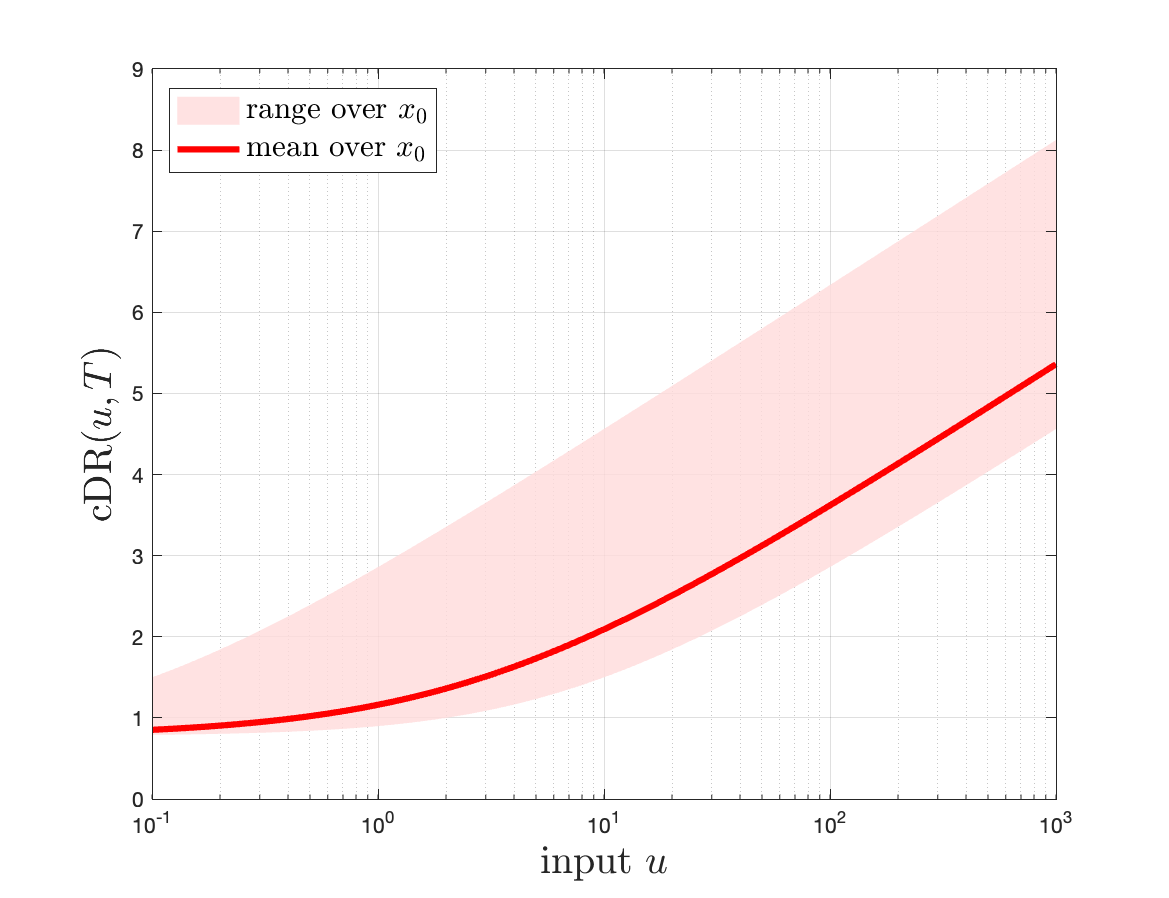}}
    \caption{System 3: Dose response $\mathrm{DR}(u,T)$ and cumulative dose response $\mathrm{cDR}(u,T)$ for $u\in[10^{-1},10^{3}]$. 
    (a)--(b) Responses for three fixed initial conditions $x_0$. 
    (c)--(d) Envelope (min--max band) and mean over $x_0\in[0.1,10]$. 
    The results show that $\mathrm{cDR}(u,T)$ is monotone nondecreasing with respect to $u$.}
    \label{fig:sys3}
\end{figure}

\begin{subequations}
\paragraph{System 4: $\dot y_u=\dfrac{1}{x_u}-\dfrac{1}{u}y_u$.}
Here $F(x_u,y_u,u)=(1/x_u)-(y_u/u)$. Thus $\partial_xF=-1/x_u^2$, $\partial_yF=-1/u$, and $\partial_uF=y_u/u^2$.
Therefore, $a_u=1/u$ and $g_u=-(p_u/x_u^2) + (y_u/u^2).$ Hence
\begin{equation}\label{eq:sys4:sca:G}
    G_u=\int_0^t \left[-\frac{p_u(s)}{x_u(s)^2}+\frac{y_u(s)}{u^2}\right]\,ds.
\end{equation}
Since $a_u=1/u$ is constant, the kernel $\lambda_u$ defined in Theorem~\ref{thm:cDR_monotone} is
\begin{equation}\label{eq:sys4:sca:lambda}
    \lambda_u=\int_t^T e^{-(s-t)/u}\,ds
    =u\bigl(1-e^{-(T-t)/u}\bigr).
\end{equation}
In particular, $\lambda_u\ge 0$ for all $t\in[0,T]$ and $u>0$, while the derivative is $\dot\lambda_u=-e^{-(T-t)/u}<0,\forall t\in[0,T], \forall u>0.$ Using $x_u=up_u+x_0e^{-t}$ and $\dot y_u=(1/x_u)-(y_u/u)$, we rewrite $g_u$ as
\begin{equation}\label{eq:sys4:sca:g}
    g_u
    =-\frac{1}{u}\dot y_u+\frac{x_0e^{-t}}{u\,x_u^2}.
\end{equation}
Integrating \eqref{eq:sys4:sca:g} over $[0,t]$ and using $y_0=1$, we obtain
\begin{equation}\label{eq:sys4:sca:G-rewrite}
    G_u =\frac{1-y_u}{u}+\frac{x_0}{u}\int_0^t \frac{e^{-s}}{x_u(s)^2}\,ds.
\end{equation}

We now analyze the sign of $\partial_u\mathrm{cDR}(u,T)$ for small and large values of $u$.

\begin{itemize}
    \item First, assume that $0<u\le x_0$. Then $x_u=u+(x_0-u)e^{-t}\ge u, \forall t\ge 0.$
    Evaluating the $y$-subsystem at $y_u=1$ gives $\dot y_u\big|_{y_u=1}=(1/x_u)-(1/u) \le 0.$
    Since $y_0=1$, the comparison principle yields $y_u\le 1$ for all $t\ge 0$. Therefore, by \eqref{eq:sys4:sca:G-rewrite},
    $G_u\ge 0$, $\forall t\ge 0$, for all $0<u\le x_0$.
    Moreover, $G_u\not\equiv 0$ for $t>0$ because the second term in \eqref{eq:sys4:sca:G-rewrite} is strictly positive. Since $-\dot\lambda_u>0, \forall t\in[0,T], \forall u >0$, Theorem~\ref{thm:cDR_monotone} (ii) implies $\partial_u\mathrm{cDR}(u,T)>0$ for all sufficiently small $u>0$.

    \item Next, we consider the case of large $u$. Using variation of parameters on
    $\dot y_u+(1/u)y_u=1/x_u$ and subtracting from $y_0=1$, we rewrite
    $G_u$ in \eqref{eq:sys4:sca:G-rewrite} as
    \begin{equation}\label{eq:sys4:sca:G-rewrite2}
        G_u(t)
        =
        \frac{1-e^{-t/u}}{u}
        -\frac1u\int_0^t \frac{e^{-(t-s)/u}}{x_u(s)}\,ds
        +\frac{x_0}{u}\int_0^t \frac{e^{-s}}{x_u(s)^2}\,ds.
    \end{equation}
    Rather than requiring a uniform sign for $G_u(t)$ on $[0,T]$, we estimate
    $\partial_u\mathrm{cDR}(u,T)$ directly. Using
    $-\dot\lambda_u(t)=e^{-(T-t)/u}$, \eqref{eq:sys4:sca:G-rewrite2} and exchanging the order of
    integration in the negative term yield
    \begin{align*}
        \partial_u\mathrm{cDR}(u,T) &= -\int_0^T \dot{\lambda}_u(t)G_u(t)\,dt \\
        &=
        \int_0^T e^{-(T-t)/u}\frac{1-e^{-t/u}}{u}\,dt -\frac1u\int_0^T
        \frac{(T-s)e^{-(T-s)/u}}{x_u(s)}\,ds \\
        &\quad
        +\frac{x_0}{u}\int_0^T
        \left(\int_s^T e^{-(T-t)/u}\,dt\right)
        \frac{e^{-s}}{x_u(s)^2}\,ds.
    \end{align*}
    The first and third terms are $O(u^{-2})$ as $u\to\infty$. For the negative term, since
    $x_u(s)=u+(x_0-u)e^{-s}$ and the dominant contribution comes from small
    $s$, we have
    \begin{equation*}
        \int_0^T \frac{(T-s)e^{-(T-s)/u}}{x_u(s)}\,ds
        =
        \frac{T\log u}{u}+O\!\left(\frac{1}{u}\right),
        \qquad u\to\infty.
    \end{equation*}
    Therefore
    \begin{equation}\label{eq:sys4:sca:cDR-large-u}
        \partial_u\mathrm{cDR}(u,T)
        =
        -\frac{T\log u}{u^2}
        +O(u^{-2}),
        \qquad u\to\infty.
    \end{equation}
    Hence $\partial_u\mathrm{cDR}(u,T)<0$ for all sufficiently large $u$.
\end{itemize}
\end{subequations}

\begin{proposition}\label{prop:sys4:sca}
    Consider the scalar System~4: $\dot x_u(t)=-x_u(t)+u$ and $\dot y_u(t)=(1/x_u(t))-(y_u(t)/u)$
    with arbitrary $x_0>0$ and $y_0=y_{\mathrm{ss}}=1$. Then there exist $u_-,u_+>0$ such that
    \begin{equation*}
        \partial_u\mathrm{cDR}(u_-,T)>0, \qquad \partial_u\mathrm{cDR}(u_+,T)<0.
    \end{equation*}
    In particular, $\mathrm{cDR}(u,T)$ is not monotone with respect to $u$.
\end{proposition}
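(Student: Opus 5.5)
The plan is to assemble the two asymptotic regimes computed just before the statement, using the integration-by-parts form
\[
\partial_u\mathrm{cDR}(u,T)=-\int_0^T\dot\lambda_u(t)\,G_u(t)\,dt ,
\]
which is established in the proof of Theorem~\ref{thm:cDR_monotone}. The kernel is explicit here: $-\dot\lambda_u(t)=e^{-(T-t)/u}>0$ on $[0,T]$ by \eqref{eq:sys4:sca:lambda}. Hence the sign of $\partial_u\mathrm{cDR}$ is governed by $G_u$, for which we use the representation \eqref{eq:sys4:sca:G-rewrite}.

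\textbf{Small $u$.} Choose $u_-\in(0,x_0]$. Then $x_u(t)=u+(x_0-u)e^{-t}\ge u$, so $\dot y_u|_{y_u=1}=1/x_u-1/u\le 0$. By the comparison principle with $y_0=1$ we get $y_u\le 1$ on $[0,T]$. In \eqref{eq:sys4:sca:G-rewrite} the first term is therefore nonnegative, and the second term is strictly positive for every $t>0$ because $x_0>0$ and $x_u>0$. Thus $-\dot\lambda_{u_-}G_{u_-}\ge 0$ and it is not identically zero. Theorem~\ref{thm:cDR_nonmonotone}(ii), or simply the strict positivity of the integral, then gives $\partial_u\mathrm{cDR}(u_-,T)>0$.

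\textbf{Large $u$.} Here we do not look for a pointwise sign of $G_u$. Instead we use \eqref{eq:sys4:sca:G-rewrite2} and exchange the order of integration to obtain the three-term expression for $\partial_u\mathrm{cDR}(u,T)$ displayed before the statement, and then bound each term as $u\to\infty$.
\begin{itemize}
\item The first term is at most $\frac{1}{u}\int_0^T(1-e^{-t/u})\,dt\le T^2/(2u^2)$.
\item For the third term, write $x_u(s)=u(1-e^{-s})+x_0e^{-s}$, so $x_u(s)^{-2}\le \min\{x_0^{-2}e^{2s},\,u^{-2}(1-e^{-s})^{-2}\}$. Splitting the $s$-integral at $s\approx x_0/u$ shows that $\int_0^T e^{-s}x_u^{-2}\,ds=O(1/u)$. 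With the prefactor $x_0/u$ and the inner integral bounded by $T$, the third term is $O(u^{-2})$.
\item The second term is the negative one. We need a lower bound on $\int_0^T (T-s)e^{-(T-s)/u}x_u(s)^{-1}\,ds$. Use $e^{-(T-s)/u}\ge e^{-T/u}$ and $x_u(s)\le u s+x_0$, valid for $u\ge x_0$ since $1-e^{-s}\le s$. On $[0,T/2]$ we have $T-s\ge T/2$, so the integral is at least $\tfrac{T}{2}e^{-T/u}\int_0^{T/2}\frac{ds}{us+x_0}=\tfrac{T}{2u}e^{-T/u}\log\!\bigl(1+\tfrac{uT}{2x_0}\bigr)$. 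This lower bound is of order $(\log u)/u$.
\end{itemize}
Combining the three estimates, $\partial_u\mathrm{cDR}(u,T)\le -c\,\frac{\log u}{u^2}+C u^{-2}$ for some constants $c,C>0$ and all large $u$. This is negative for sufficiently large $u$, so any such value can serve as $u_+$.

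The main obstacle is the large-$u$ estimate. The positive terms must be shown to be genuinely $O(u^{-2})$, in particular the third term: its integrand $e^{-s}/x_u(s)^2$ is of size $x_0^{-2}$ near $s=0$ and only becomes $O(u^{-2})$ once $s\gtrsim x_0/u$. At the same time the negative term must carry the extra $\log u$ factor. Only a one-sided lower bound is needed for the negative term, not the precise asymptotic constant $T\log u/u$, so the crude estimate above suffices. Finally, $\partial_u\mathrm{cDR}(u_-,T)>0>\partial_u\mathrm{cDR}(u_+,T)$, and since $\partial_u\mathrm{cDR}$ takes both signs, $u\mapsto\mathrm{cDR}(u,T)$ is not monotone.
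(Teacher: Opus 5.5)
Your proposal is correct and follows the paper's proof. In the small-$u$ regime you argue $x_u\ge u\Rightarrow y_u\le 1\Rightarrow G_u\ge 0$ with $G_u\not\equiv 0$, and in the large-$u$ regime you use the three-term expression for $\partial_u\mathrm{cDR}(u,T)$ obtained from \eqref{eq:sys4:sca:G-rewrite2} by exchanging the order of integration. Your explicit bounds make rigorous the asymptotic $-T\log u/u^2+O(u^{-2})$ that the paper asserts without detailed justification: the $O(u^{-2})$ estimates on the two positive terms and the $\tfrac{T}{2u}e^{-T/u}\log\bigl(1+\tfrac{uT}{2x_0}\bigr)$ lower bound on the negative integral.
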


\begin{proof}
The sensitivity $q_u(t)=\partial_u y_u(t)$ satisfies $\dot q_u(t)+(1/u)q_u(t)=g_u(t)$, where $g_u(t)$ is given by \eqref{eq:sys4:sca:g}. Also, $-\dot\lambda_u(t)>0$ for all $t\in[0,T]$.

\begin{enumerate}
    \item If $0<u\le x_0$, then $x_u(t)\ge u$ and thus $y_u(t)\le 1$ for all $t\ge 0$. By \eqref{eq:sys4:sca:G-rewrite}, this implies $G_u(t)\ge 0$ on $[0,T]$, with $G_u\not\equiv 0$. Hence, by Theorem~\ref{thm:cDR_monotone} (ii), $\partial_u\mathrm{cDR}(u,T)>0$.

    \item For large $u$, by \eqref{eq:sys4:sca:cDR-large-u}, $\partial_u\mathrm{cDR}(u,T)=-(T\log u/u^2)+O(u^{-2})$ as $u\to\infty.$
    Hence $\partial_u\mathrm{cDR}(u,T)<0$ for all sufficiently large $u$.
\end{enumerate}
Thus, $\partial_u\mathrm{cDR}(u,T)>0$ for small $u$ and $\partial_u\mathrm{cDR}(u,T)<0$ for large $u$, and hence $\mathrm{cDR}(u,T)$ is not monotone.
\end{proof}

\begin{figure}[h!]
    \centering
    \subfloat[\label{sys4a}]{\includegraphics[width=0.23\linewidth]{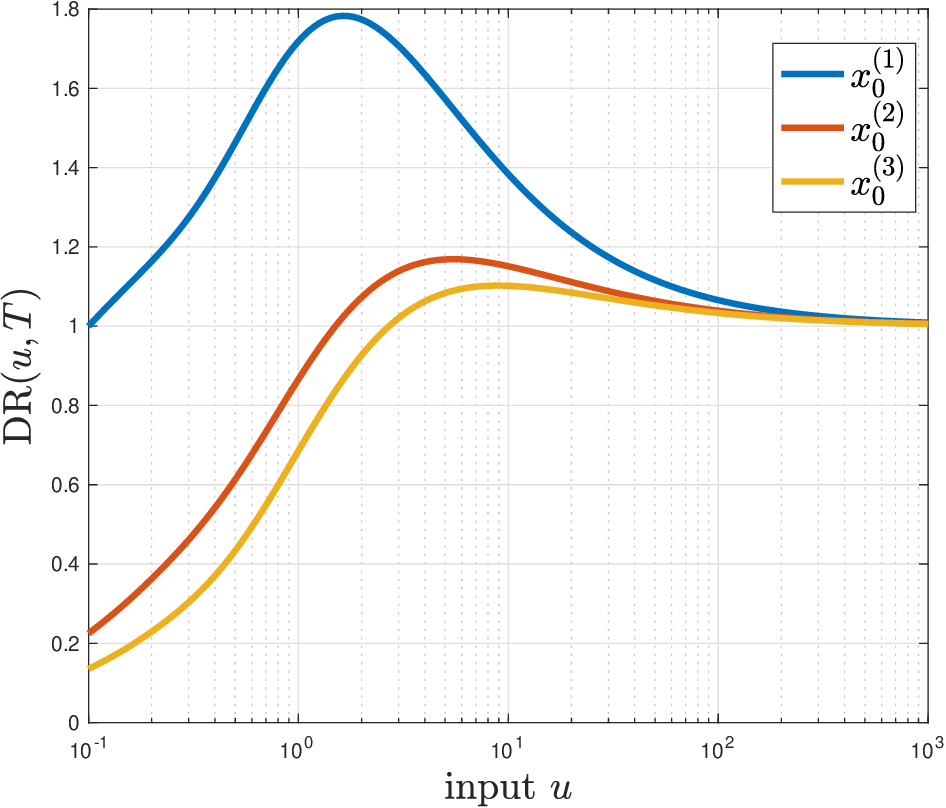}}
    \subfloat[\label{sys4b}]{\includegraphics[width=0.23\linewidth]{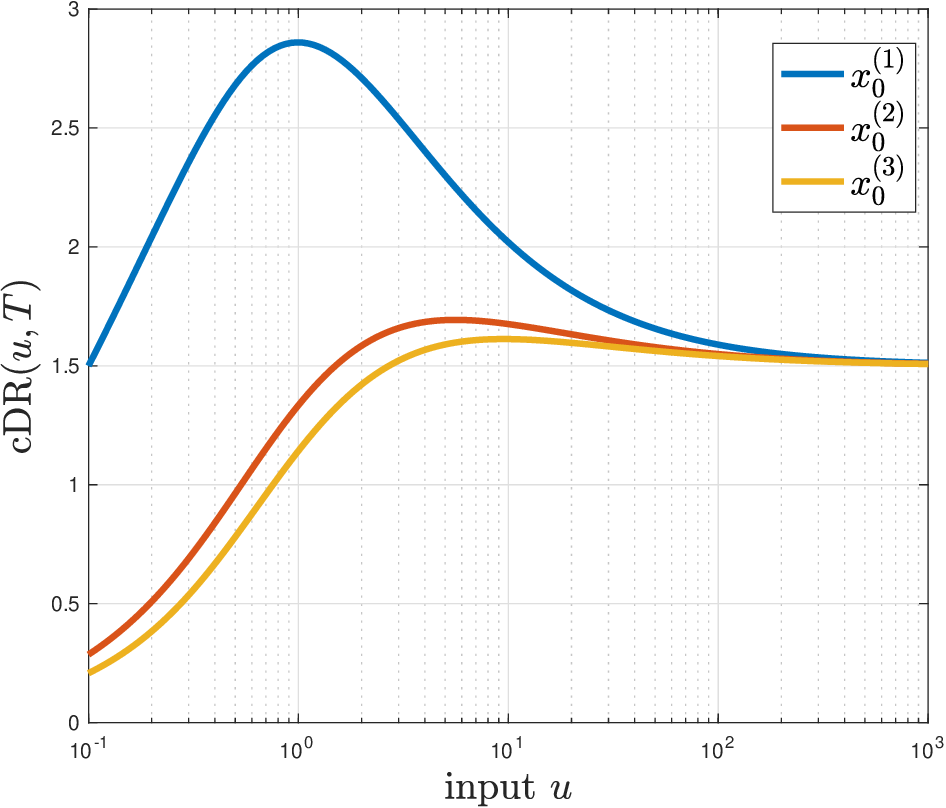}}
    \subfloat[\label{sys4c}]{\includegraphics[width=0.275\linewidth]{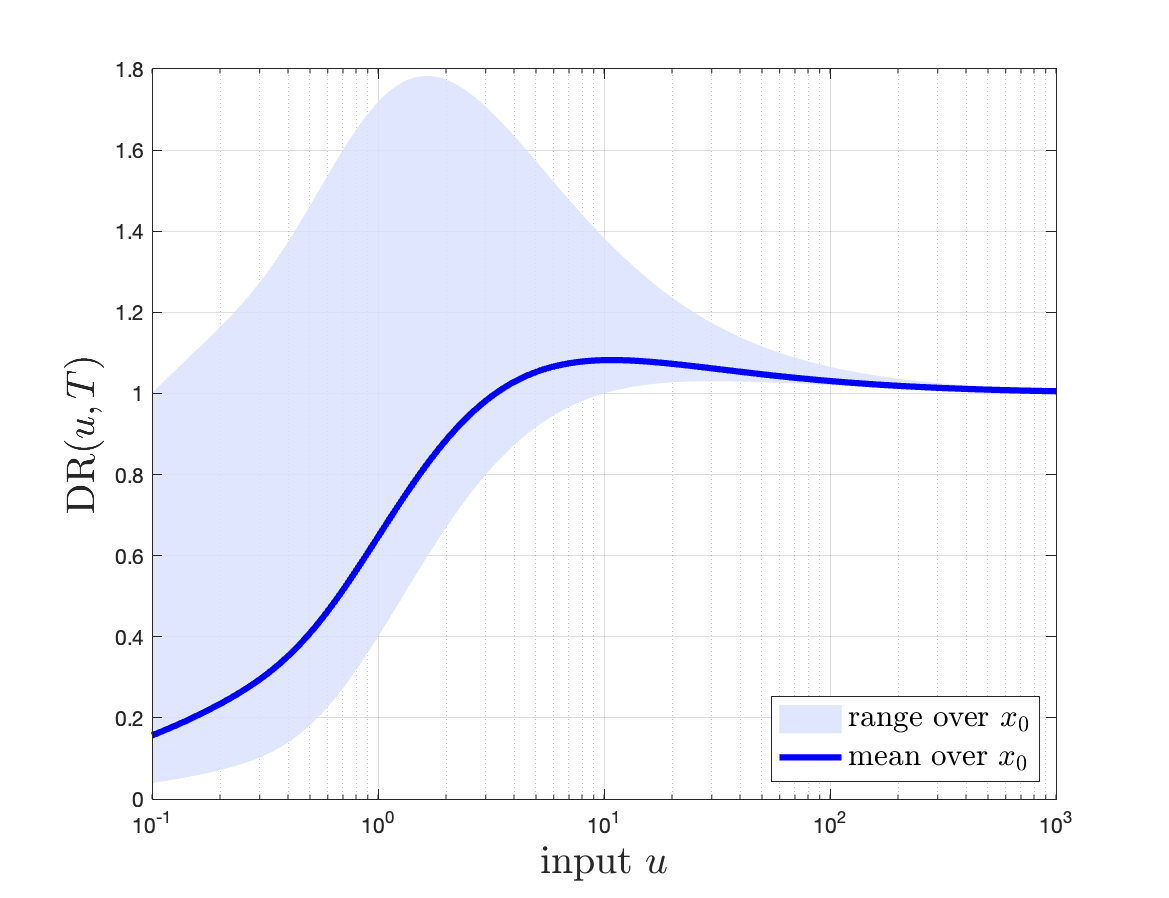}}
    \subfloat[\label{sys4d}]{\includegraphics[width=0.275\linewidth]{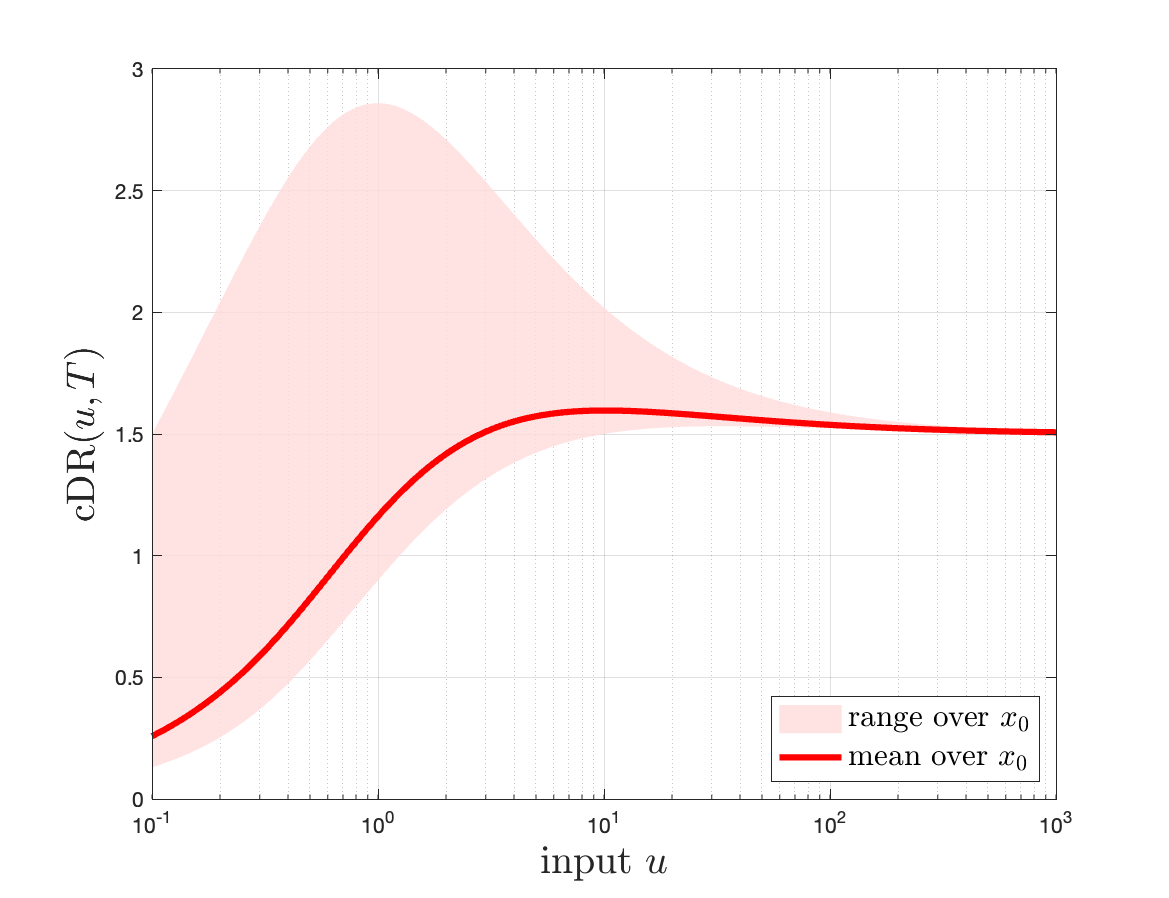}}
    \caption{System 4: Dose response $\mathrm{DR}(u,T)$ and cumulative dose response $\mathrm{cDR}(u,T)$ for $u\in[10^{-1},10^{3}]$. 
    (a)--(b) Responses for three fixed initial conditions $x_0$. 
    (c)--(d) Envelope (min--max band) and mean over $x_0\in[0.1,10]$. 
    The results show that $\mathrm{cDR}(u,T)$ is not monotone with respect to $u$, exhibiting a change in monotonicity.}
    \label{fig:sys4}
\end{figure}

\begin{subequations}
\paragraph{System 5: $\dot y_u=u-x_u y_u$.}
Here $F(x_u,y_u,u)=u-x_uy_u$. Thus $\partial_xF=-y_u$, $\partial_yF=-x_u$, and $\partial_uF=1$.

Therefore, $a_u=x_u$ and $g_u=1-y_up_u.$ Hence
\begin{equation}\label{eq:sys5:sca:G}
    G_u=\int_0^t \bigl[1-y_u(s)p_u(s)\bigr]\,ds
    = t-\int_0^t y_u(s)p_u(s)\,ds.
\end{equation}

Since $a_u=x_u\ge 0$, the kernel $\lambda_u$ defined in Theorem~\ref{thm:cDR_monotone} satisfies
\begin{equation}\label{eq:sys5:sca:lambda}
    \lambda_u=\int_t^T \exp\!\left(-\int_t^s x_u(\tau)\,d\tau\right)\,ds.
\end{equation}
In particular, $\lambda_u\ge 0$ for all $t\in[0,T]$ and $u>0$.

We now distinguish two cases.

\begin{itemize}
    \item If $x_0\ge u$, then $x_u=u+(x_0-u)e^{-t}\ge u, \forall t\ge 0$. Since $y_0=y_{\mathrm{ss}}=1$, evaluating the $y$-subsystem at $y_u=1$ gives $\dot y_u\big|_{y_u=1}=u-x_u\le 0.$
    Hence, by comparison principle, $y_u\le 1$ for all $t\ge 0$. Moreover, since $0\le p_u\coloneqq 1-e^{-t}\le 1$, it follows that
    $y_up_u\le 1$, and therefore
    \begin{equation}\label{eq:sys5:sca:gcase1}
        g_u=1-y_up_u\ge 0, \quad \forall t \ge 0, \forall u >0.
    \end{equation}

    \item Next, suppose that $x_0<u$. Then $x_u=u+(x_0-u)e^{-t}\le u, \forall t\ge 0.$
    Again, since $y_0=1$, evaluating at $y_u=1$ gives $\dot y_u\big|_{y_u=1}=u-x_u\ge 0,$
    so by comparison principle $y_u\ge 1$ for all $t\ge 0$.
    Using $x_u=u+(x_0-u)e^{-t}$ in the $y$-subsystem, we rewrite $g_u$ as
    \begin{equation}\label{eq:sys5:sca:g}
        \begin{aligned}
        g_u\coloneqq 1-y_up_u 
        &= 1-(1-e^{-t})y_u \\
        &= 1-y_u+\frac{u-x_0+x_0}{u}e^{-t}y_u \\
        &= 1-\frac{x_u}{u}y_u+\frac{x_0}{u}e^{-t}y_u \eqqcolon \frac{1}{u}\dot y_u+\frac{x_0}{u}e^{-t}y_u.
        \end{aligned}
    \end{equation}
    Integrating \eqref{eq:sys5:sca:g} over $[0,t]$ and using $y_0=1$, we obtain
    \begin{equation}\label{eq:sys5:sca:Gcase2}
        G_u
        =\frac{1}{u}(y_u-1)+\frac{x_0}{u}\int_0^t e^{-s}y_u(s)\,ds \ge 0,
    \end{equation}
    since $y_u\ge 1$ and $y_u\ge 0$. Thus, both terms in \eqref{eq:sys5:sca:Gcase2} are nonnegative and hence $G_u\ge 0, \forall t \ge 0, \forall u >0$.
    
    It remains to show that $\dot\lambda_u\le 0$ when $x_0<u$. Differentiating \eqref{eq:sys5:sca:lambda} gives
    $\dot\lambda_u=-1+x_u\lambda_u.$
    Since $x_0<u$, the function $x_u=u+(x_0-u)e^{-t}$ is nondecreasing. Thus, for every $s\in[t,T]$,
    $x_u(\tau)\ge x_u(t), \forall \tau\in[t,s].$ Therefore,
    \begin{equation*}
        \exp\!\left(-\int_t^s x_u(\tau)\,d\tau\right) \le e^{-(s-t)x_u}.
    \end{equation*}
    Integrating from $t$ to $T$ yields
    \begin{equation*}
        x_u\lambda_u \le x_u\int_t^T e^{-(s-t)x_u}\,ds
        = 1-e^{-(T-t)x_u}\le 1.
    \end{equation*}
    Hence
    \begin{equation}\label{eq:sys5:sca:lambdadot}
        \dot\lambda_u=-1+x_u\lambda_u\le 0, \quad \forall t\in[0,T], \forall u > 0.
    \end{equation}
\end{itemize}
\end{subequations}

\begin{proposition}\label{prop:sys5:sca}
    Consider the scalar System~5: $\dot x_u(t)=-x_u(t)+u$ and $\dot y_u(t)=u-x_u(t)y_u(t)$
    with arbitrary $x_0\ge 0$ and $y_0=y_{\mathrm{ss}}=1$. Then, for every $T>0$,
    $\mathrm{cDR}(u,T)$ is monotone nondecreasing with respect to $u>0$.
\end{proposition}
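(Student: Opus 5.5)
The plan is to combine the two case computations just carried out for System~5 with the integral representation \eqref{eq:int_by_part} of $\partial_u\mathrm{cDR}(u,T)$, applying Theorem~\ref{thm:cDR_monotone} in whichever form matches the case. The sensitivity $q_u=\partial_u y_u$ satisfies $\dot q_u + x_u q_u = g_u$ with $q_u(0)=0$ and $g_u = 1-y_u p_u$. Since $x_u\ge 0$, the kernel $\lambda_u$ in \eqref{eq:sys5:sca:lambda} is nonnegative on $[0,T]$ and satisfies $\lambda_u(T)=0$. We fix $u>0$ and split according to the sign of $x_0-u$, exactly as in the preceding discussion.

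\textbf{Case $x_0\ge u$.} Here \eqref{eq:sys5:sca:gcase1} gives $g_u\ge 0$ on $[0,T]$. Together with $\lambda_u\ge 0$, the integrand $\lambda_u g_u$ in \eqref{eq:int_by_part} is nonnegative, so $\partial_u\mathrm{cDR}(u,T)\ge 0$. This is form (i) of Theorem~\ref{thm:cDR_monotone}.

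\textbf{Case $x_0<u$.} Here $g_u$ need not have a sign, so we use form (ii) of Theorem~\ref{thm:cDR_monotone}. By \eqref{eq:sys5:sca:Gcase2}, $G_u\ge 0$, and by \eqref{eq:sys5:sca:lambdadot}, $\dot\lambda_u\le 0$. Integrating by parts, with the boundary terms vanishing because $G_u(0)=0$ and $\lambda_u(T)=0$, gives
\[
\partial_u\mathrm{cDR}(u,T) = -\int_0^T \dot\lambda_u G_u\,dt \ge 0 .
\]

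Combining the two cases, $\partial_u\mathrm{cDR}(u,T)\ge 0$ for every $u>0$, so $u\mapsto\mathrm{cDR}(u,T)$ is nondecreasing.

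The main obstacle is that the hypothesis of Theorem~\ref{thm:cDR_monotone} appears to require one condition to hold for all $u$, while here different values of $u$ fall into different cases. This is harmless, because the conclusion we actually need is pointwise in $u$: the sign of $\partial_u\mathrm{cDR}(u,T)$ at each $u$. Both cases produce the same sign, $\ge 0$. So the argument should state monotonicity directly from the pointwise sign of the derivative, citing the representation \eqref{eq:int_by_part} rather than the uniform version of the theorem.

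There is also a regularity point: the case boundary $u=x_0$ must cause no trouble. This is fine because $\mathrm{cDR}(\cdot,T)$ is $C^1$ in $u$ by smooth dependence on parameters, and a $C^1$ function whose derivative is nonnegative everywhere is nondecreasing.
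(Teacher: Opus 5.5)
Your proposal is correct and follows the paper's proof essentially verbatim. It uses the same case split, $x_0\ge u$ versus $x_0<u$: in the first case Theorem~\ref{thm:cDR_monotone}(i) applies via $g_u\ge 0$ and $\lambda_u\ge 0$, and in the second case (ii) applies via $G_u\ge 0$ and $\dot\lambda_u\le 0$. Your two extra remarks make explicit what the paper leaves implicit. First, the sign conclusion is pointwise in $u$ through \eqref{eq:int_by_part}, so choosing (i) or (ii) depending on $u$ is legitimate. Second, a nonnegative derivative at every $u>0$ gives monotonicity by the mean value theorem, including at the boundary $u=x_0$.
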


\begin{proof}
    The sensitivity $q_u(t)=\partial_u y_u(t)$ satisfies $\dot q_u(t)+x_u(t)q_u(t)=g_u(t)$ where $g_u(t)=1-y_u(t)p_u(t)$.
    \begin{enumerate}
        \item If $x_0\ge u$, then by \eqref{eq:sys5:sca:gcase1}, $g_u(t)\ge 0$, $\forall t \ge 0, \forall u >0$. Since $\lambda_u(t)\ge 0$, $\forall t\in[0,T], \forall u>0$ by \eqref{eq:sys5:sca:lambda}, it follows that
        $\lambda_u(t)g_u(t)\ge 0, \forall t\in[0,T].$
        Hence, by Theorem~\ref{thm:cDR_monotone} (i), $\partial_u\mathrm{cDR}(u,T)\ge 0.$

        \item If $x_0<u$, then by \eqref{eq:sys5:sca:Gcase2}, $G_u(t)\ge 0$, $\forall t \ge 0, \forall u >0$, and by \eqref{eq:sys5:sca:lambdadot}, $\dot\lambda_u(t)\le 0$, $\forall t\in[0,T], \forall u>0$.
        Thus $-\dot\lambda_u(t)\,G_u(t)\ge 0, \forall t\in[0,T].$
        Hence, by Theorem~\ref{thm:cDR_monotone} (ii), $\partial_u\mathrm{cDR}(u,T)\ge 0.$
    \end{enumerate}
    Therefore, in either case, $u\mapsto \mathrm{cDR}(u,T)$ is monotone nondecreasing.
\end{proof}

\begin{figure}[h!]
    \centering
    \subfloat[\label{sys5a}]{\includegraphics[width=0.23\linewidth]{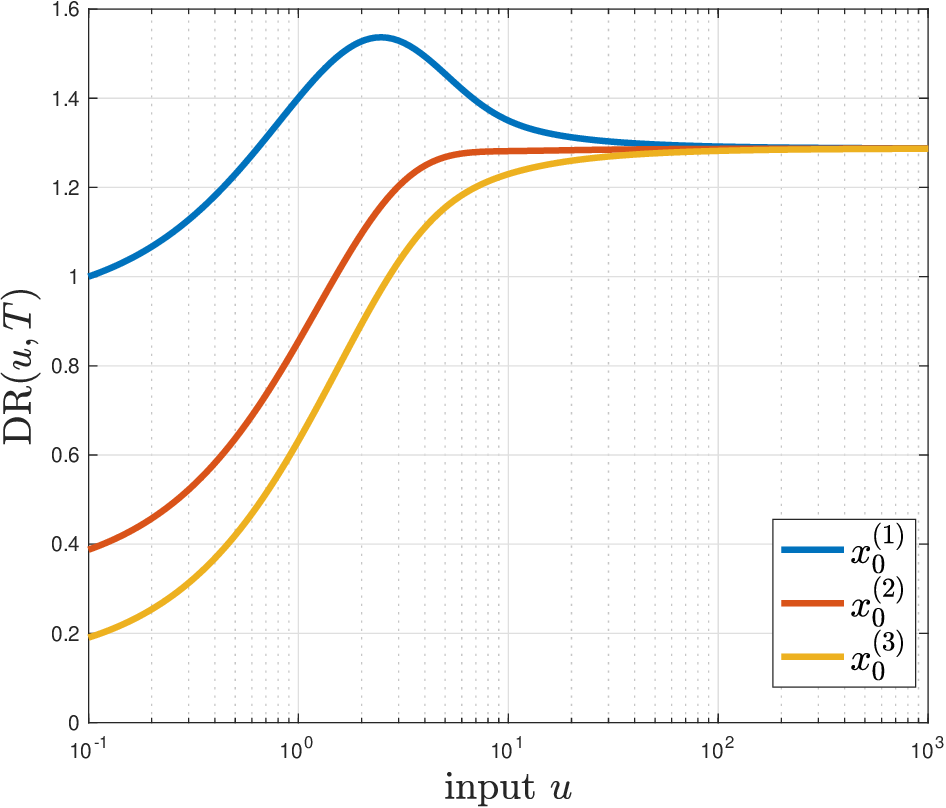}}
    \subfloat[\label{sys5b}]{\includegraphics[width=0.23\linewidth]{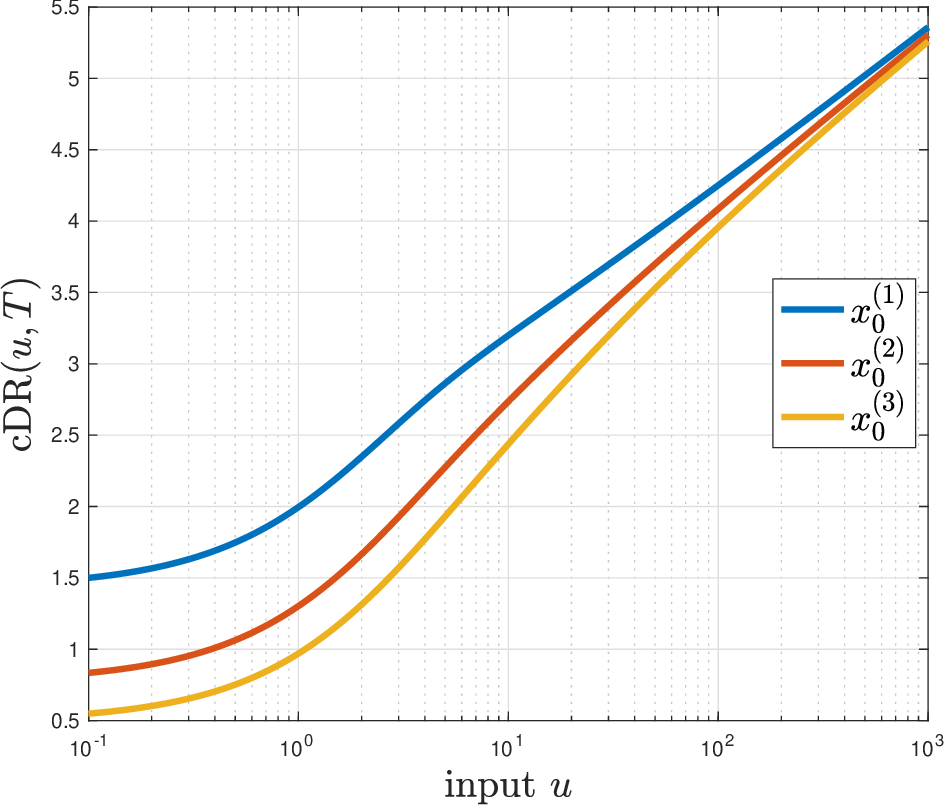}}
    \subfloat[\label{sys5c}]{\includegraphics[width=0.275\linewidth]{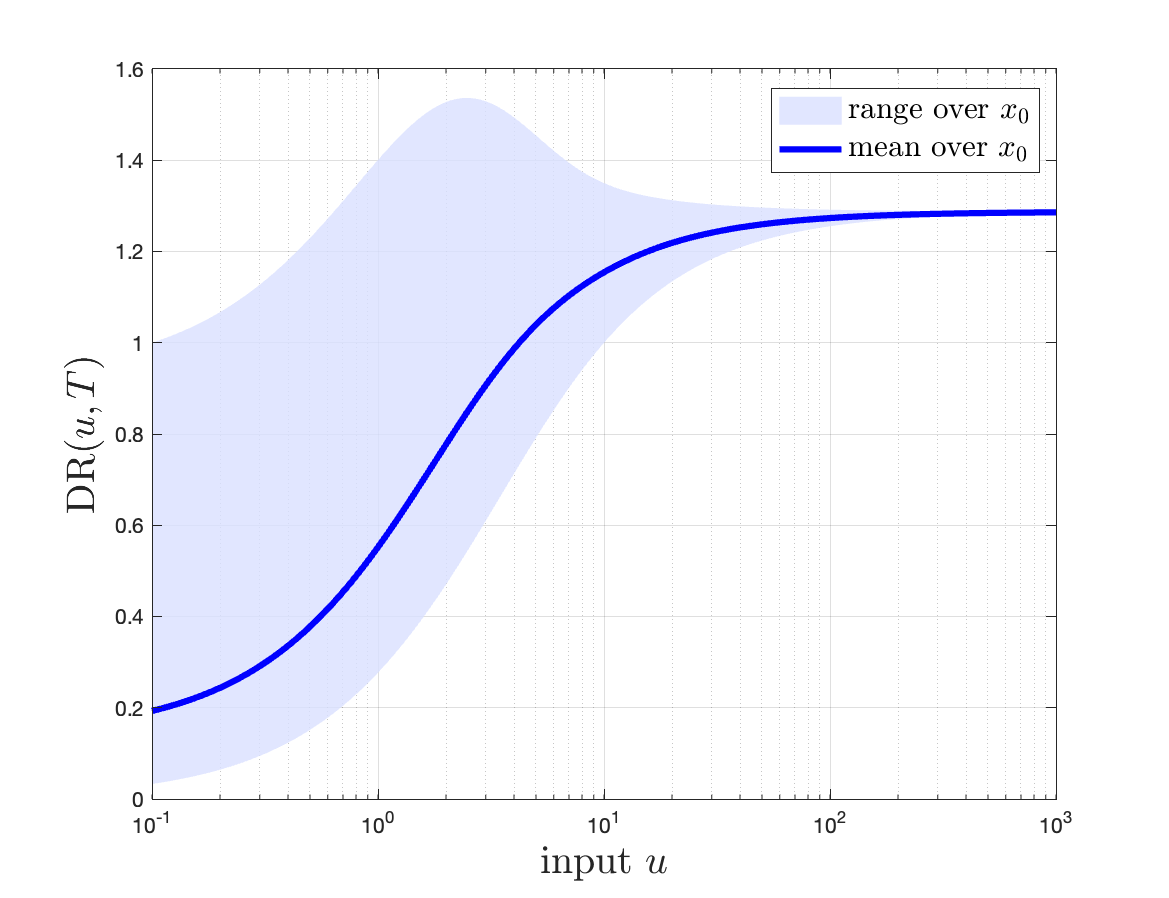}}
    \subfloat[\label{sys5d}]{\includegraphics[width=0.275\linewidth]{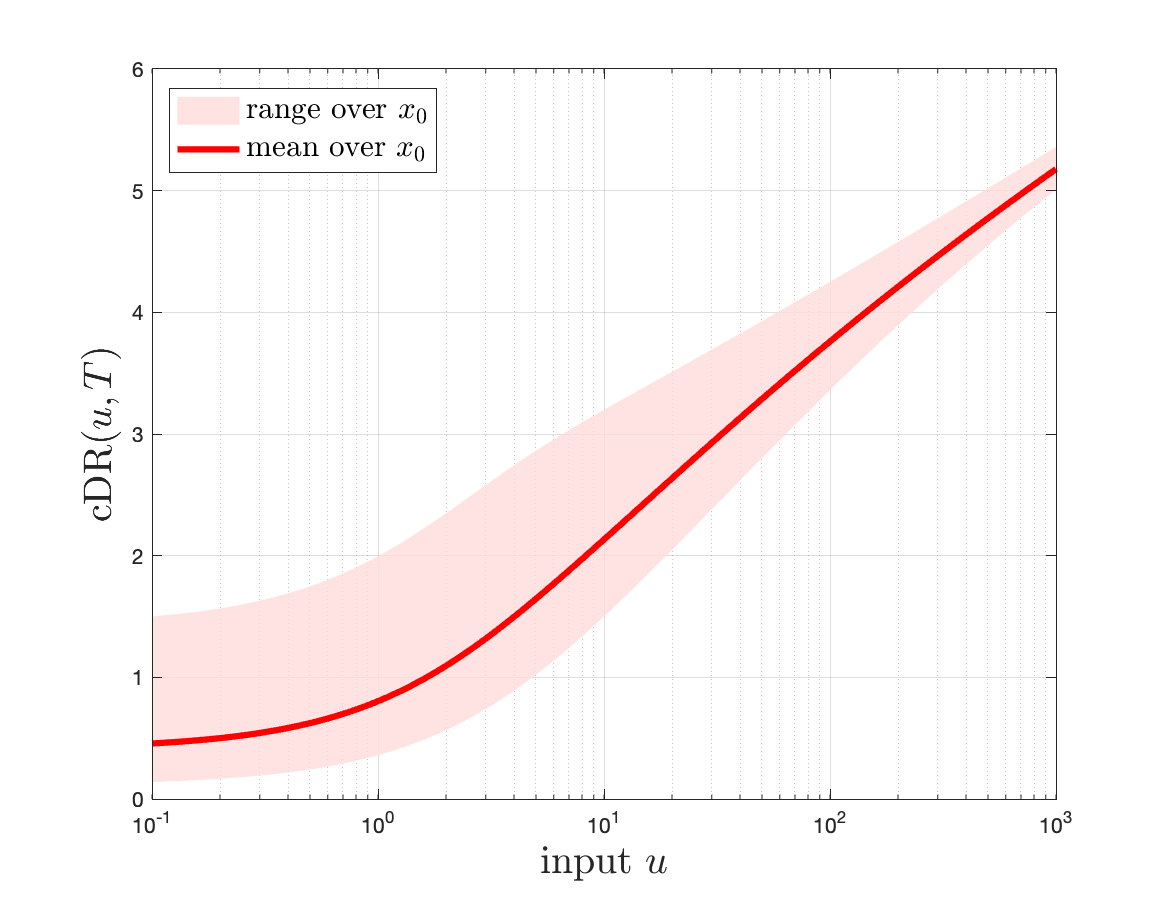}}
    \caption{System 5: Dose response $\mathrm{DR}(u,T)$ and cumulative dose response $\mathrm{cDR}(u,T)$ for $u\in[10^{-1},10^{3}]$. 
    (a)--(b) Responses for three fixed initial conditions $x_0$. 
    (c)--(d) Envelope (min--max band) and mean over $x_0\in[0.1,10]$. 
    The results show that $\mathrm{cDR}(u,T)$ is monotone nondecreasing with respect to $u$.}
    \label{fig:sys5}
\end{figure}

\begin{subequations}
\paragraph{System 6: $\dot y_u=1-\dfrac{x_u}{u}y_u$.}
Here $F(x_u,y_u,u)=1-(x_uy_u/u)$. Thus $\partial_xF=-y_u/u$, $\partial_yF=-x_u/u$, and $\partial_uF=x_uy_u/u^2$.
Therefore, $a_u=x_u/u$ and 
\begin{equation}\label{eq:sys6:sca:g}
    g_u=\frac{y_u(x_u-up_u)}{u^2} = \frac{x_0e^{-t}}{u^2}y_u \ge 0,
\end{equation}
$\forall t\ge 0, \forall u > 0$. Hence
\begin{equation}\label{eq:sys6:sca:G}
    G_u=\int_0^t \frac{x_0e^{-s}}{u^2}y_u(s)\,ds \ge 0,
\end{equation}
$\forall t\ge 0, \forall u > 0$. Since $a_u=x_u/u\ge 0$, the kernel $\lambda_u$ defined in Theorem~\ref{thm:cDR_monotone} satisfies
\begin{equation}\label{eq:sys6:sca:lambda}
    \lambda_u=\int_t^T \exp\!\left(-\int_t^s \frac{x_u(\tau)}{u}\,d\tau\right)\,ds.
\end{equation}
In particular, $\lambda_u\ge 0$ for all $t\in[0,T]$ and $u > 0$.
\end{subequations}

\begin{proposition}\label{prop:sys6:sca}
    Consider the scalar System~6: $\dot x_u(t)=-x_u(t)+u$ and $\dot y_u(t)=1-(x_u(t)y_u(t)/u)$
    with arbitrary $x_0\ge 0$ and $y_0=y_{\mathrm{ss}}=1$. Then, for every $T>0$,
    $\mathrm{cDR}(u,T)$ is monotone nondecreasing with respect to $u>0$.
\end{proposition}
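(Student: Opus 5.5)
The plan is to apply Theorem~\ref{thm:cDR_monotone}~(i), following the pattern of the proofs of Propositions~\ref{prop:sys1:sca} and~\ref{prop:sys3:sca}. Two facts have already been computed for System~6. First, \eqref{eq:sys6:sca:g} gives
\[
g_u=\frac{x_0e^{-t}}{u^2}\,y_u.
\]
Second, \eqref{eq:sys6:sca:lambda} gives a kernel $\lambda_u\ge 0$ on $[0,T]$ for every $u>0$. So I only need to confirm that each factor has a sign that is fixed and independent of $u$; the two sign checks are below.

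\textbf{Sign of $\lambda_u$.} The coefficient $a_u=x_u/u$ is nonnegative. This holds because $x_u=u+(x_0-u)e^{-t}$ is a convex combination of $x_0\ge 0$ and $u>0$. The exponential in \eqref{eq:sys6:sca:lambda} is therefore positive, so $\lambda_u(t)\ge 0$ for $t\in[0,T]$, with $\lambda_u(t)>0$ for $t<T$.

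\textbf{Sign of $g_u$.} The factor $x_0e^{-t}/u^2$ is nonnegative. It remains to show $y_u(t)\ge 0$ for all $t\ge 0$. At $y_u=0$ the right-hand side $1-(x_u/u)y_u$ equals $1>0$. With $y_0=1>0$, the comparison principle (or forward invariance of the nonnegative half-line) gives $y_u(t)\ge 0$; in fact $y_u(t)>0$. Hence $g_u\ge 0$ on $[0,T]$ for every $u>0$.

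\textbf{Conclusion.} Since $\lambda_u g_u\ge 0$ on $[0,T]$, the representation \eqref{eq:int_by_part} from Theorem~\ref{thm:cDR_monotone}~(i) gives
\[
\partial_u\mathrm{cDR}(u,T)=\int_0^T\lambda_u g_u\,dt\ge 0
\]
for all $u>0$. Thus $u\mapsto\mathrm{cDR}(u,T)$ is nondecreasing for every $T>0$.

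The only step needing any care is the exact cancellation $x_u-up_u=x_0e^{-t}$ behind \eqref{eq:sys6:sca:g}. This follows from \eqref{eq:xp_explicit}, and it is what makes $g_u$ sign-definite. It also covers the degenerate case: when $x_0=0$ we get $g_u\equiv 0$, so the derivative is zero and the monotonicity claim holds trivially. I expect no real obstacle beyond that.
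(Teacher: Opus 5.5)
Your proposal is correct and follows the paper's proof: both invoke Theorem~\ref{thm:cDR_monotone}~(i), using $\lambda_u\ge 0$ from \eqref{eq:sys6:sca:lambda} and $g_u=x_0e^{-t}y_u/u^2\ge 0$ from \eqref{eq:sys6:sca:g}, and your forward-invariance argument for $y_u\ge 0$ makes explicit what the paper takes for granted. One small point: $\lambda_u\ge 0$ needs no sign information on $a_u$, because its integrand is an exponential, so your convex-combination step showing $x_u\ge 0$ is harmless but unnecessary.
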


\begin{proof}
    The sensitivity $q_u(t)=\partial_u y_u(t)$ satisfies $\dot q_u(t)+(x_u(t)/u)q_u(t)=g_u(t)$ where $g_u(t)$ is given in \eqref{eq:sys6:sca:g}.
    From \eqref{eq:sys6:sca:lambda}, $\lambda_u(t)\ge 0$, $\forall t\in[0,T], \forall u>0$, and $g_u(t)\ge 0$, $\forall t\ge 0, \forall u > 0$.
    Thus $\lambda_u(t)g_u(t)\ge 0, \forall t\in[0,T].$
    By Theorem~\ref{thm:cDR_monotone} (i), $\partial_u\mathrm{cDR}(u,T)\ge 0.$
    Hence $u\mapsto \mathrm{cDR}(u,T)$ is monotone nondecreasing.
\end{proof}

\begin{figure}[h!]
    \centering
    \subfloat[\label{sys6a}]{\includegraphics[width=0.23\linewidth]{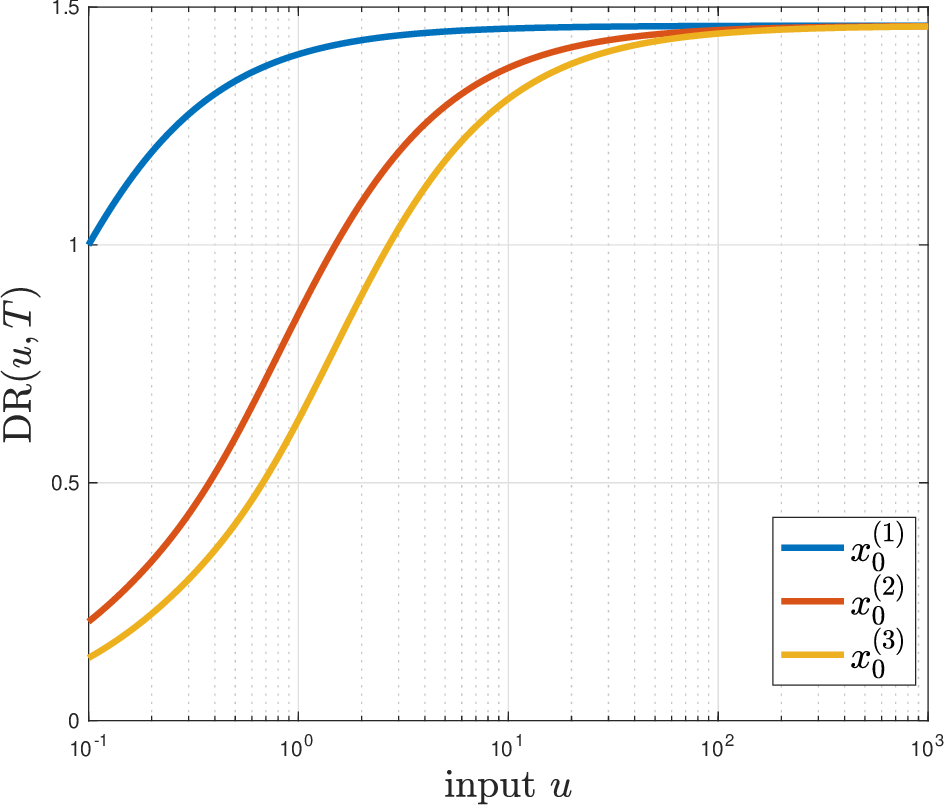}}
    \subfloat[\label{sys6b}]{\includegraphics[width=0.23\linewidth]{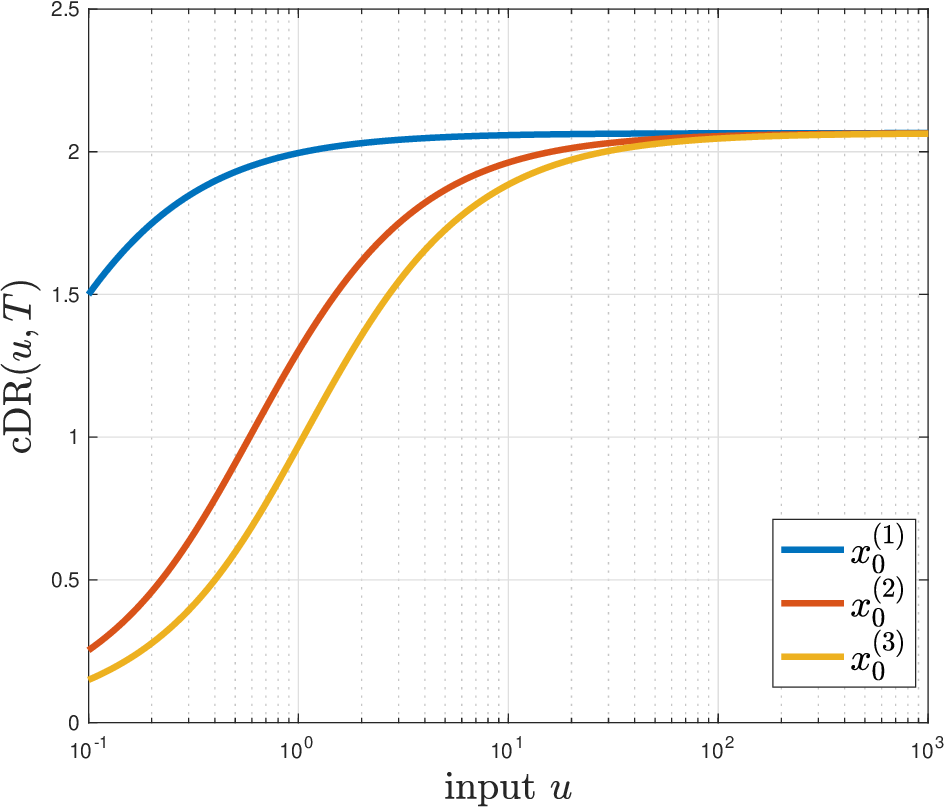}}
    \subfloat[\label{sys6c}]{\includegraphics[width=0.275\linewidth]{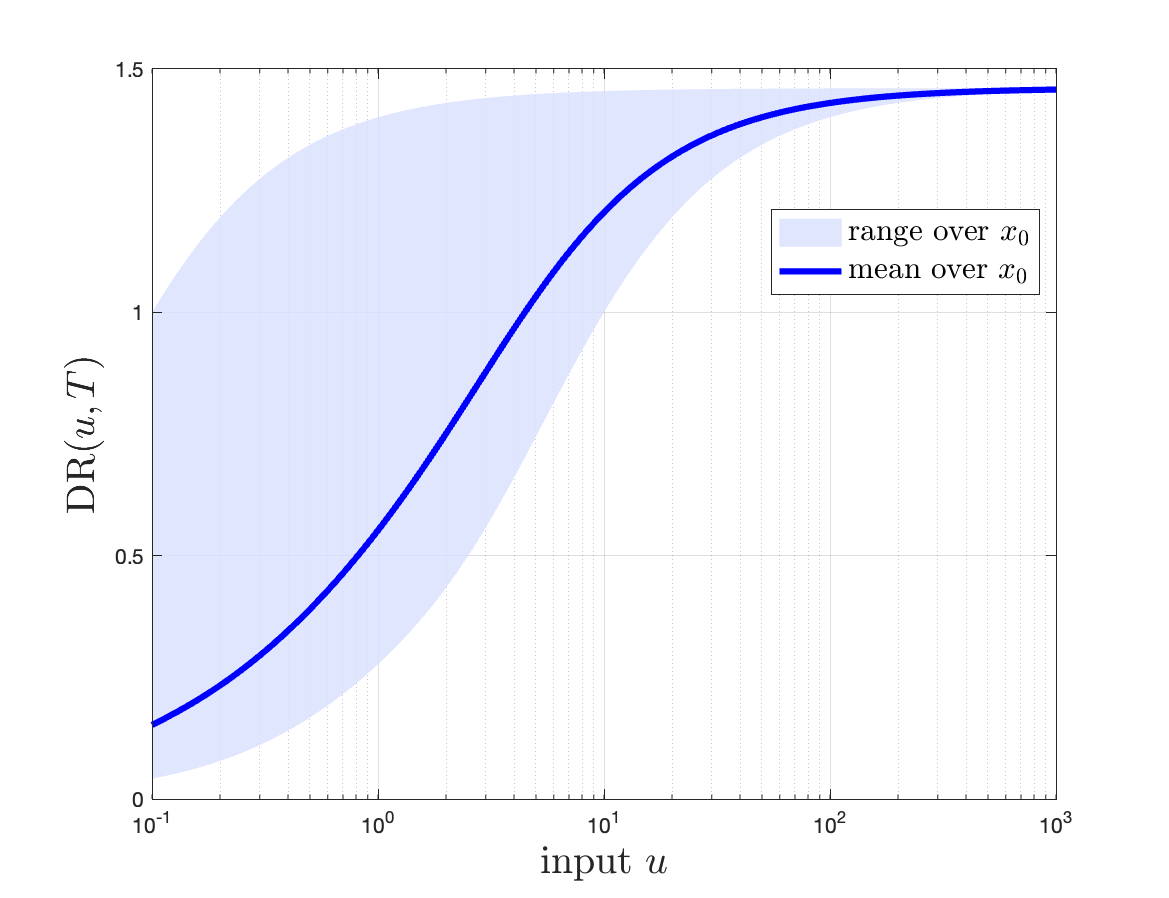}}
    \subfloat[\label{sys6d}]{\includegraphics[width=0.275\linewidth]{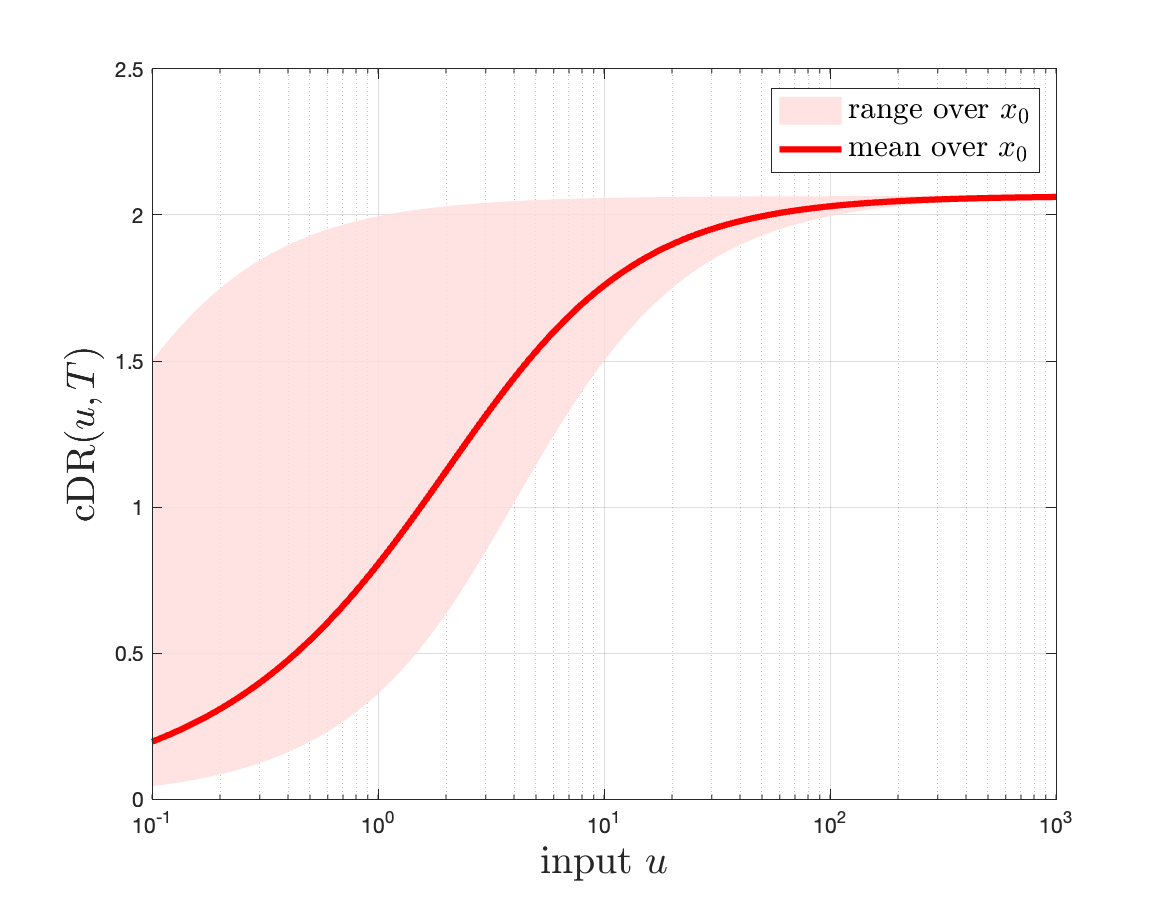}}
    \caption{System 6: Dose response $\mathrm{DR}(u,T)$ and cumulative dose response $\mathrm{cDR}(u,T)$ for $u\in[10^{-1},10^{3}]$. 
    (a)--(b) Responses for three fixed initial conditions $x_0$. 
    (c)--(d) Envelope (min--max band) and mean over $x_0\in[0.1,10]$. 
    The results show that $\mathrm{cDR}(u,T)$ is monotone nondecreasing with respect to $u$.}
    \label{fig:sys6}
\end{figure}

\begin{subequations}
\paragraph{System 7: $\dot y_u=\dfrac{1}{u}-\dfrac{1}{x_u}y_u$.}
Here $F(x_u,y_u,u)=(1/u)-(y_u/x_u)$. Thus $\partial_xF=y_u/x_u^2$, $\partial_yF=-1/x_u$, and $\partial_uF=-1/u^2$.
Therefore, $a_u=1/x_u$ and $g_u=(y_up_u/x_u^2)-(1/u^2).$
Using $x_u=up_u+x_0e^{-t}$ and $\dot y_u=(1/u)-(y_u/x_u)$, we rewrite $g_u$ as
\begin{equation}\label{eq:sys7:sca:g}
    g_u=-\frac{1}{u}\dot y_u-\frac{x_0e^{-t}}{u}\frac{y_u}{x_u^2}.
\end{equation}
Integrating \eqref{eq:sys7:sca:g} over $[0,t]$ and using $y_0=1$, we obtain
\begin{equation}\label{eq:sys7:sca:G}
    G_u=\frac{1-y_u}{u}-\frac{x_0}{u}\int_0^t \frac{e^{-s}y_u(s)}{x_u(s)^2}\,ds.
\end{equation}
Since $a_u=1/x_u>0$, the kernel $\lambda_u$ defined in Theorem~\ref{thm:cDR_monotone} satisfies
\begin{equation}\label{eq:sys7:sca:lambda}
    \lambda_u=\int_t^T \exp\!\left(-\int_t^s \frac{1}{x_u(\tau)}\,d\tau\right)\,ds.
\end{equation}
In particular, $\lambda_u\ge 0$ for all $t\in[0,T]$ and $u>0$.

We now analyze two cases.

\begin{itemize}
    \item If $0<u\le x_0$, then $x_u=u+(x_0-u)e^{-t}\ge u$ for all $t\ge 0$. Evaluating the $y$-subsystem at $y_u=1$ gives
    $\dot y_u\big|_{y_u=1}=(1/u)-(1/x_u)\ge 0.$ Since $y_u(0)=1$, the comparison principle yields $y_u\ge 1$ for all $t\ge 0$. By \eqref{eq:sys7:sca:G}, this implies $G_u\le 0,\forall t\in[0,T].$
    Moreover, since $x_u$ is nonincreasing, $a_u=1/x_u$ is nondecreasing. Hence, by the same comparison argument used for $\lambda_u$ in System~5, $\dot\lambda_u\le 0$. Hence $-\dot\lambda_uG_u\le 0,\forall t\in[0,T],$
    and therefore, by Theorem~\ref{thm:cDR_monotone} (ii), $\partial_u\mathrm{cDR}(u,T)<0.$

    \item Next, we consider the case of large $u$. Let $z_u:=1-y_u$. Then
    \begin{equation*}
        \dot z_u+\frac{1}{x_u}z_u
        = \frac{1}{x_u}-\frac{1}{u}
        = \frac{u-x_0}{u}\frac{e^{-t}}{x_u}, \qquad z_u(0)=0.
    \end{equation*}
    By variation of parameters,
    \begin{equation*}
        z_u =\frac{u-x_0}{u}\int_0^t
        \exp\!\left(-\int_s^t \frac{1}{x_u(r)}\,dr\right)\frac{e^{-s}}{x_u(s)}\,ds.
    \end{equation*}
    Since $x_u(s)=u+(x_0-u)e^{-s}\sim x_0+us$ as $s\to 0$, it follows that, for each fixed $t>0$,
    \begin{equation}\label{eq:sys7:sca:z-asym}
        z_u \sim \frac{\log u}{u}, \qquad u\to\infty.
    \end{equation}
    On the other hand, the second term in \eqref{eq:sys7:sca:G} satisfies
    \begin{equation}\label{eq:sys7:sca:second}
        \frac{x_0}{u}\int_0^t \frac{e^{-s}y_u(s)}{x_u(s)^2}\,ds = O(u^{-2}).
    \end{equation}
    Substituting \eqref{eq:sys7:sca:z-asym} and \eqref{eq:sys7:sca:second} into \eqref{eq:sys7:sca:G}, we obtain
    \begin{equation}\label{eq:sys7:sca:G-asym}
        G_u=\frac{\log u}{u^2}+O(u^{-2}), \qquad u\to\infty.
    \end{equation}
    In particular, for every fixed $t>0$, one has $G_u>0$ for all sufficiently large $u$.
    
    Rather than requiring a uniform sign for $G_u(t)$ on $[0,T]$, we estimate
    $\partial_u\mathrm{cDR}(u,T)$ directly. Since
    \[
    -\dot\lambda_u(t)
    =
    \frac{1}{x_u(t)}
    \int_t^T
    \exp\!\left(-\int_t^s \frac{1}{x_u(r)}\,dr\right)\,ds
    -1,
    \]
    and $x_u(t)\sim x_0+ut$ for small $t$, the dominant contribution again comes from the logarithmic growth near $t=0$. Using \eqref{eq:sys7:sca:G-asym}, one obtains
    \begin{equation}\label{eq:sys7:sca:cDR-large-u}
        \partial_u\mathrm{cDR}(u,T)
        =
        \frac{T\log u}{u^2}
        +O(u^{-2}),
        \qquad u\to\infty.
    \end{equation}
    Hence $\partial_u\mathrm{cDR}(u,T)>0$ for all sufficiently large $u$.
\end{itemize}
\end{subequations}

\begin{proposition}\label{prop:sys7:sca}
Consider the scalar System~7 $\dot x_u(t)=-x_u(t)+u$ and $\dot y_u(t)=\dfrac{1}{u}-\dfrac{1}{x_u(t)}y_u(t)$
with arbitrary $x_0>0$ and $y_0=y_{\mathrm{ss}}=1$. Then there exist $u_-,u_+>0$ such that
\begin{equation*}
    \partial_u\mathrm{cDR}(u_-,T)<0, \qquad
    \partial_u\mathrm{cDR}(u_+,T)>0.
\end{equation*}
In particular, $\mathrm{cDR}(u,T)$ is not monotone with respect to $u$.
\end{proposition}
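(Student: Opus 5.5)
The plan is to exhibit $u_-$ in the regime $0<u\le x_0$ and $u_+$ in the regime $u\to\infty$. Both are treated through the integration-by-parts representation from the proof of Theorem~\ref{thm:cDR_monotone}:
\[
\partial_u\mathrm{cDR}(u,T)=-\int_0^T\dot\lambda_u(t)\,G_u(t)\,dt,
\]
with $G_u$ written as in \eqref{eq:sys7:sca:G}. The key observation is that the sign of $1-y_u$, and hence of the first term of \eqref{eq:sys7:sca:G}, is fixed by comparing $u$ with $x_0$.

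\emph{Small $u$ (take any $u_-\in(0,x_0]$).} The first case above already gives $y_u\ge 1$, hence $G_u\le 0$, and $\dot\lambda_u\le 0$ on $[0,T]$. It remains to upgrade $\partial_u\mathrm{cDR}\le 0$ to a strict inequality, for which two facts suffice.
\begin{itemize}
\item For $t>0$ the second term in \eqref{eq:sys7:sca:G} is strictly negative, since $x_0>0$ and $y_u>0$. Hence $G_u(t)<0$ for every $t\in(0,T]$.
\item Since $\dot\lambda_u=-1+\lambda_u/x_u$ and $\lambda_u(T)=0$, we have $\dot\lambda_u(T)=-1$. By continuity, $-\dot\lambda_u>0$ on some interval $(T-\epsilon,T]$.
\end{itemize}
The integrand $-\dot\lambda_u G_u$ is therefore $\le 0$ everywhere and $<0$ on a set of positive measure, which gives $\partial_u\mathrm{cDR}(u_-,T)<0$.

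\emph{Large $u$.} I would avoid the pointwise asymptotics and use one-sided bounds, uniform in $t$, for $u>x_0$.
\begin{itemize}
\item \emph{Sign of $z_u=1-y_u$.} For $u>x_0$ we have $x_u\le u$, so $\dot y_u|_{y_u=1}\le 0$ and the comparison principle gives $0\le y_u\le 1$. Thus $z_u\ge 0$, and since also $x_u\ge x_0$,
\[
G_u(t)\ \ge\ -\frac{x_0}{u}\int_0^T\frac{e^{-s}}{x_u(s)^2}\,ds\ \ge\ -\frac{C}{u^2}.
\]
The last bound follows from $x_u(s)\ge x_0+(u-x_0)(1-e^{-s})$, which is $\ge x_0+c\,us$ on $[0,T]$ for large $u$, so the integral is $O(1/u)$.
\item \emph{Lower bound on $z_u$.} Fix $\delta\in(0,T)$. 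In the variation-of-parameters formula for $z_u$, the exponential factor is $\ge e^{-T/x_0}$ because $1/x_u\le 1/x_0$. Also $e^{-s}/x_u(s)\ge e^{-T}/(x_0+us)$. Hence for $t\ge\delta$,
\[
z_u(t)\ \ge\ c_1\,\frac{u-x_0}{u}\cdot\frac{\log(1+u\delta/x_0)}{u},
\]
and therefore $G_u(t)\ge c_2\log u/u^2-C/u^2$ uniformly on $[\delta,T]$.
\item \emph{The weight $-\dot\lambda_u$.} We have $|\dot\lambda_u|\le 1+T/x_0$ everywhere, since $\lambda_u\le T$. Moreover, $1/x_u\to 0$ uniformly on $[\delta,T]$, so $\lambda_u\to T-t$ and $-\dot\lambda_u=1-\lambda_u/x_u\to 1$ uniformly there. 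In particular $-\dot\lambda_u\ge\tfrac12$ on $[\delta,T]$ for large $u$.
\end{itemize}

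Splitting the integral at $\delta$ then gives
\[
\partial_u\mathrm{cDR}(u,T)\ \ge\ \tfrac12(T-\delta)\,c_2\,\frac{\log u}{u^2}\;-\;\frac{C'}{u^2}.
\]
Here $C'$ absorbs the $O(u^{-2})$ part on $[\delta,T]$. On $[0,\delta]$ it absorbs the contribution of the negative part of $G_u$, bounded by $(1+T/x_0)\,\delta\,C/u^2$; there $G_u$ may be positive, but that only helps the lower bound. The right-hand side is positive for $u$ large, which yields $u_+$.

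The main obstacle is the uniform logarithmic lower bound on $z_u$ together with the uniform convergence $-\dot\lambda_u\to 1$ on $[\delta,T]$. The delicate constants come from the initial layer where $x_u(s)\approx x_0+us$, and these must be controlled independently of $t\in[\delta,T]$.
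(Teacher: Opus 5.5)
Your route is the paper's: $u_-\in(0,x_0]$ via Theorem~\ref{thm:cDR_monotone}(ii) with $G_u\le 0$ and $\dot\lambda_u\le 0$, and $u_+$ from the $\log u/u^2$ size of $G_u$ weighted by $-\dot\lambda_u\to 1$. Your uniform one-sided bounds tighten the paper's pointwise asymptotics. Using $\dot\lambda_u(T)=-1$ also properly secures the strict inequality at $u_-$, which ``$G_u\not\equiv 0$'' alone does not.

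However, one step of the large-$u$ estimate fails as written. On $[0,\delta]$ you say that positive values of $G_u$ ``only help the lower bound''. That requires $-\dot\lambda_u\ge 0$ there, but you have only shown $|\dot\lambda_u|\le 1+T/x_0$. In fact $-\dot\lambda_u=1-\lambda_u/x_u$, with $x_u(0)=x_0$ and $\lambda_u(0)\to T$ as $u\to\infty$. Hence $-\dot\lambda_u(0)\to 1-T/x_0<0$ whenever $T>x_0$, which the proposition allows. Where the weight is negative, positive $G_u$ hurts rather than helps. Your estimates only give $|G_u|\lesssim \log u/u^2$ on $[0,\delta]$, so a priori this piece is $-O(\delta\log u/u^2)$. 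That is the same order as your main term, not $O(u^{-2})$ as your bookkeeping with $C'$ assumes.

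The repair is short. Since $\lambda_u\le T$, the weight $-\dot\lambda_u$ lies in $[0,1]$ wherever $x_u(t)\ge T$. Your bound $x_u(t)\ge x_0+c\,ut$ gives this for $t\ge t_u:=T/(cu)$.
\begin{itemize}
\item On $[0,t_u]$, use $z_u(t)\le t/x_0$. This is immediate from the variation-of-parameters formula when $u>x_0$, since every factor there is at most $1$ or $1/x_0$. Then $|G_u|=O(u^{-2})$ on this interval, and since its length is $O(1/u)$ it contributes $O(u^{-3})$.
\item On $[t_u,\delta]$, the weight is in $[0,1]$ and $G_u\ge -C/u^2$, so the integrand is $\ge -C/u^2$ and your $C'/u^2$ accounting becomes valid.
\end{itemize}
Alternatively, bound $z_u\le\int_0^t ds/x_u(s)\le C_3\log u/u$ with $C_3$ independent of $\delta$. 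Your $c_2$ can also be taken independent of $\delta$, since the $\log(\delta/x_0)$ term goes into the $u^{-2}$ constant. Choosing $\delta$ small then makes the $[0,\delta]$ loss smaller than $\tfrac12(T-\delta)c_2\log u/u^2$. With either fix, the lower bound $\partial_u\mathrm{cDR}(u,T)\ge \tfrac12(T-\delta)c_2\log u/u^2 - C'/u^2$ holds, and the proof is complete.
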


\begin{proof}
The sensitivity $q_u(t)=\partial_u y_u(t)$ satisfies $\dot q_u(t)+\dfrac{1}{x_u(t)}q_u(t)=g_u(t)$, where $g_u(t)$ is given by \eqref{eq:sys7:sca:g}.

\begin{enumerate}
    \item If $0<u\le x_0$, then $x_u(t)\ge u$ and therefore $y_u(t)\ge 1$ for all $t\ge 0$. By \eqref{eq:sys7:sca:G}, this yields $G_u(t)\le 0$ for all $t\in[0,T]$, with $G_u\not\equiv 0$. Moreover, since $\dot\lambda_u(t)\le 0$, Theorem~\ref{thm:cDR_monotone} (ii) implies
    $\partial_u\mathrm{cDR}(u,T)<0.$

    \item For large $u$, by \eqref{eq:sys7:sca:cDR-large-u}, $\partial_u\mathrm{cDR}(u,T)=(T\log u/u^2)+O(u^{-2})$ as
    $u\to\infty.$ Hence $\partial_u\mathrm{cDR}(u,T)>0$ for all sufficiently large $u$.
\end{enumerate}
Thus there exist $u_-,u_+>0$ such that $\partial_u\mathrm{cDR}(u_-,T)<0$ and $\partial_u\mathrm{cDR}(u_+,T)>0,$
and therefore $\mathrm{cDR}(u,T)$ is not monotone.
\end{proof}

\begin{figure}[h!]
    \centering
    \subfloat[\label{sys7a}]{\includegraphics[width=0.23\linewidth]{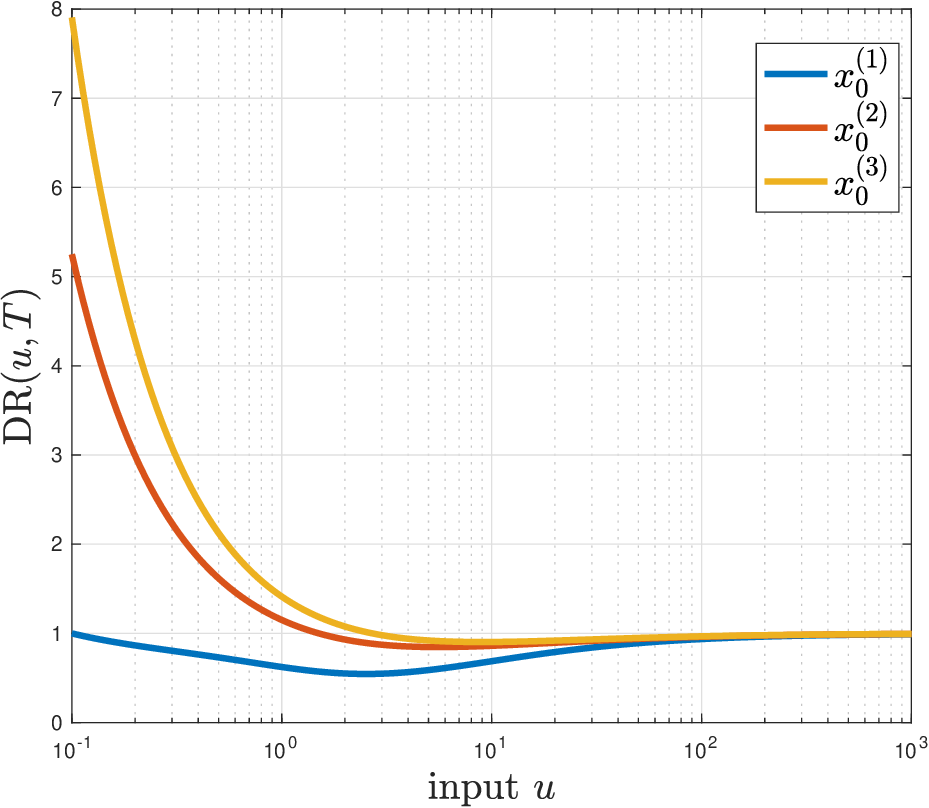}}
    \subfloat[\label{sys7b}]{\includegraphics[width=0.23\linewidth]{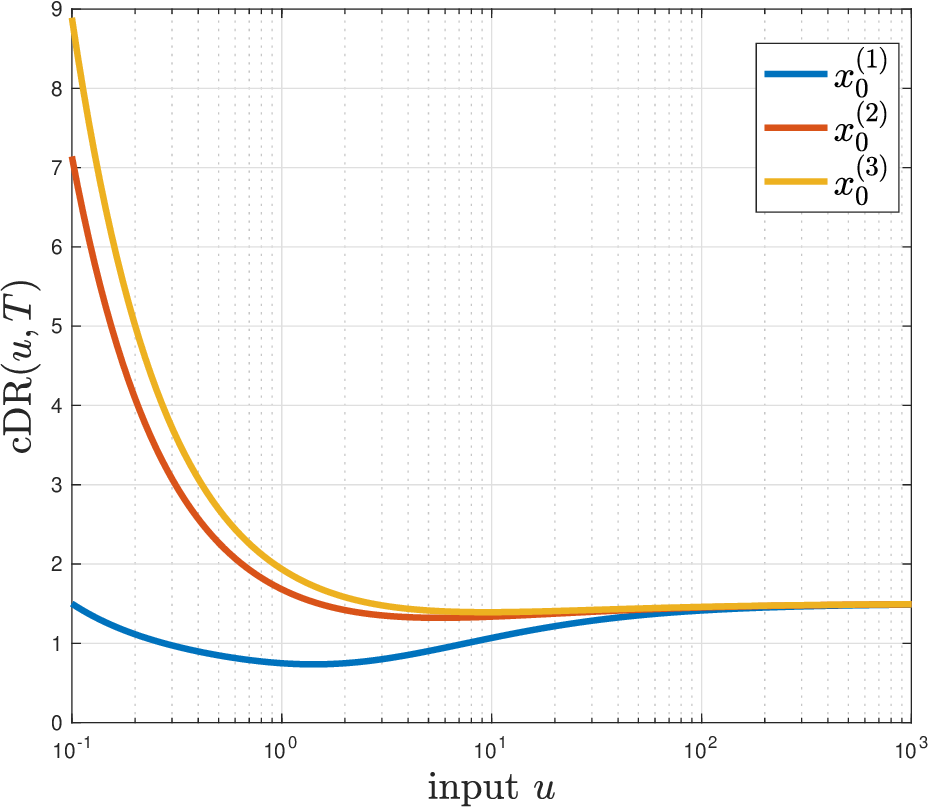}}
    \subfloat[\label{sys7c}]{\includegraphics[width=0.275\linewidth]{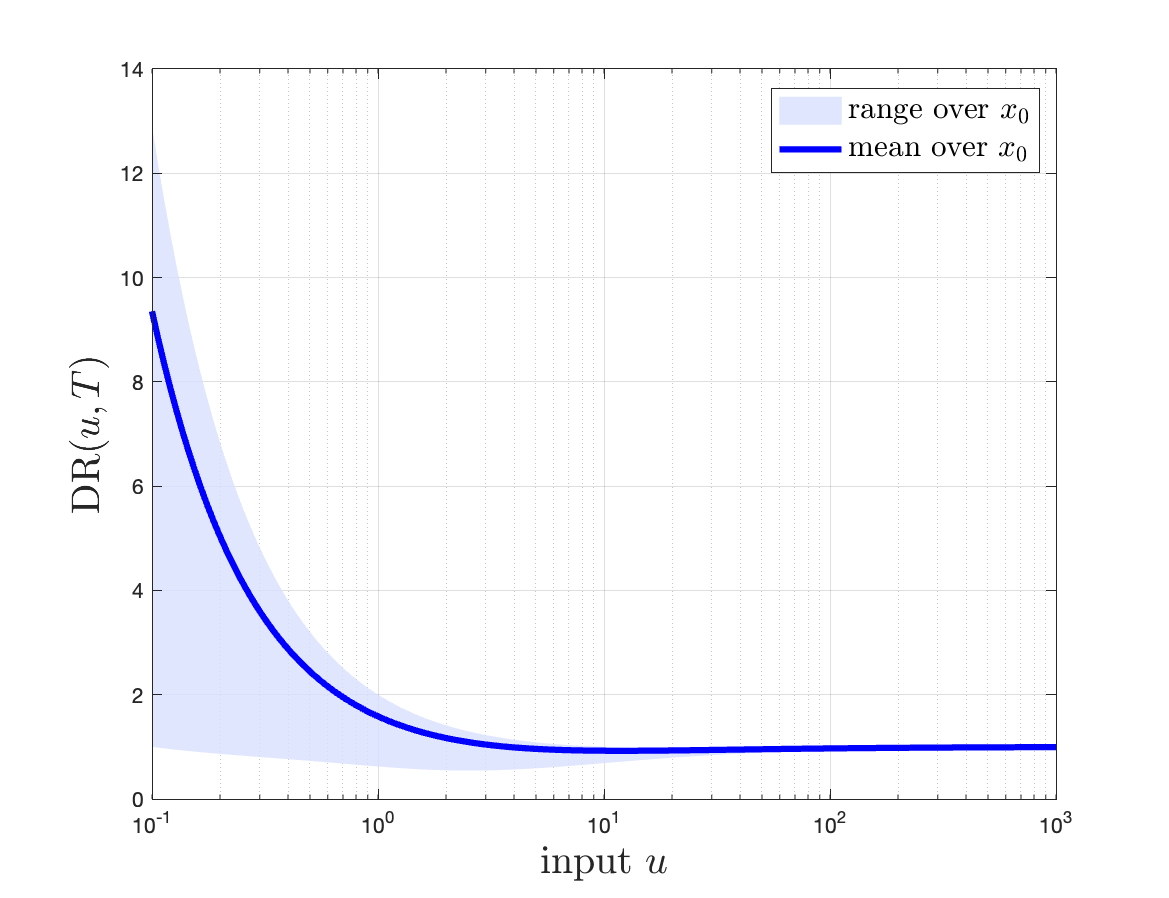}}
    \subfloat[\label{sys7d}]{\includegraphics[width=0.275\linewidth]{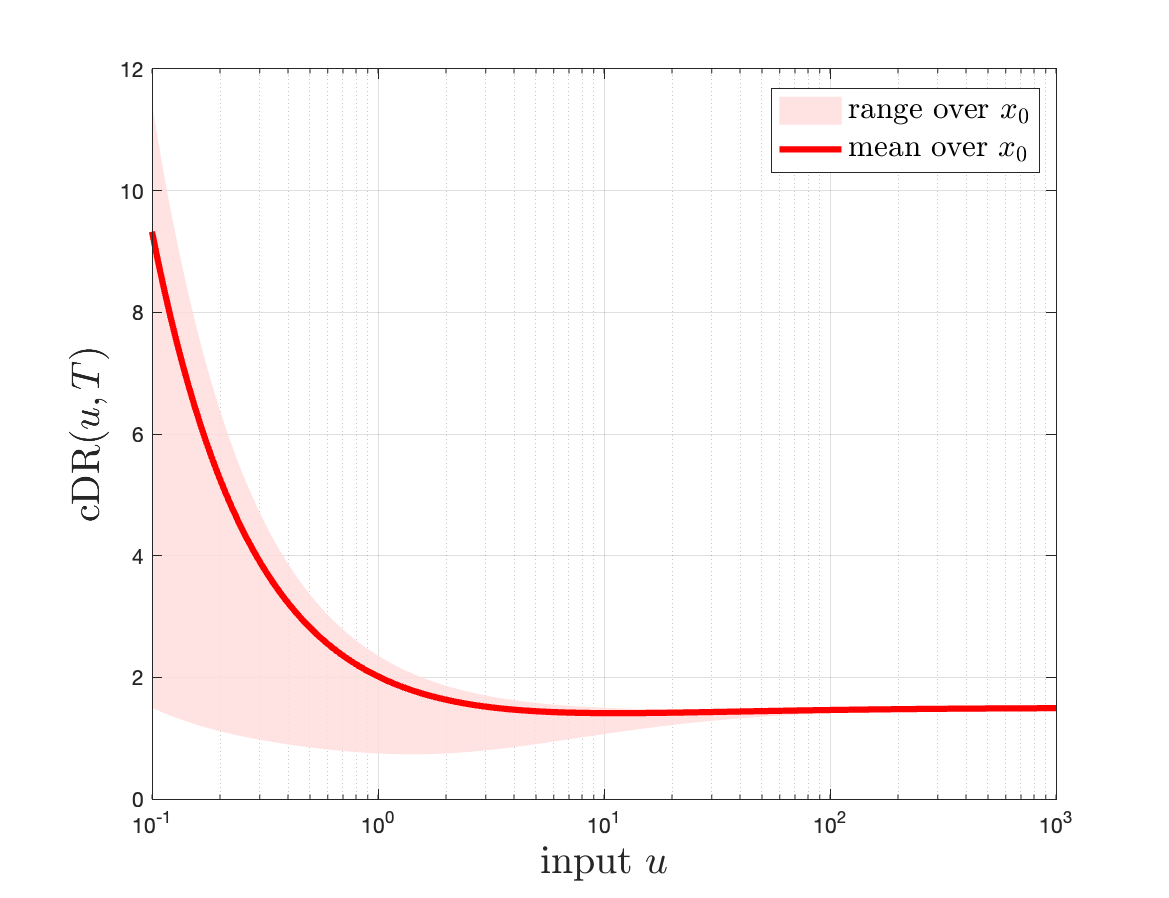}}
    \caption{System 7: Dose response $\mathrm{DR}(u,T)$ and cumulative dose response $\mathrm{cDR}(u,T)$ for $u\in[10^{-1},10^{3}]$. 
    (a)--(b) Responses for three fixed initial conditions $x_0$. 
    (c)--(d) Envelope (min--max band) and mean over $x_0\in[0.1,10]$. 
    The results show that $\mathrm{cDR}(u,T)$ is not monotone with respect to $u$, exhibiting a change in monotonicity.}
    \label{fig:sys7}
\end{figure}

\begin{subequations}
\paragraph{System 8: $\dot y_u=1-\dfrac{u}{x_u}y_u$.}
Here $F(x_u,y_u,u)=1-(uy_u/x_u)$. Thus $\partial_xF=uy_u/x_u^2$, $\partial_yF=-u/x_u$, and $\partial_uF=-y_u/x_u$.
Therefore, $a_u=u/x_u$ and 
\begin{equation}\label{eq:sys8:sca:g}
    g_u=\frac{uy_u}{x_u^2}p_u-\frac{y_u}{x_u}
    =\frac{y_u(up_u-x_u)}{x_u^2} = -\frac{x_0e^{-t}}{x_u^2}y_u \le 0,
\end{equation}
$\forall t\ge 0, \forall u > 0$. Hence
\begin{equation}\label{eq:sys8:sca:G}
    G_u=-\int_0^t \frac{x_0e^{-s}}{x_u(s)^2}y_u(s)\,ds \le 0,
\end{equation}
$\forall t\ge 0, \forall u > 0$. Since $a_u=u/x_u\ge 0$, the kernel $\lambda_u$ defined in Theorem~\ref{thm:cDR_monotone} satisfies
\begin{equation}\label{eq:sys8:sca:lambda}
    \lambda_u=\int_t^T \exp\!\left(-\int_t^s \frac{u}{x_u(\tau)}\,d\tau\right)\,ds.
\end{equation}
In particular, $\lambda_u\ge 0$ for all $t\in[0,T]$ and $u>0$.
\end{subequations}

\begin{proposition}\label{prop:sys8:sca}
    Consider the scalar System~8: $\dot x_u(t)=-x_u(t)+u$ and $\dot y_u(t)=1-(uy_u(t))/x_u(t)$
    with arbitrary $x_0> 0$ and $y_0=y_{\mathrm{ss}}=1$. Then, for every $T>0$,
    $\mathrm{cDR}(u,T)$ is monotone nonincreasing with respect to $u>0$.
\end{proposition}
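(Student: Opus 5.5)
The plan is to follow the same template as Propositions~\ref{prop:sys1:sca}, \ref{prop:sys3:sca} and \ref{prop:sys6:sca}, applying Theorem~\ref{thm:cDR_monotone}~(i). The sensitivity $q_u=\partial_u y_u$ satisfies $\dot q_u+(u/x_u)q_u=g_u$, with $a_u=u/x_u$ and $g_u$ as in \eqref{eq:sys8:sca:g}. The whole argument therefore reduces to two sign facts on $[0,T]$, uniform in $u>0$: first $\lambda_u\ge 0$, and second $g_u\le 0$. Their product is then nonpositive, and \eqref{eq:int_by_part} gives $\partial_u\mathrm{cDR}(u,T)=\int_0^T\lambda_u g_u\,dt\le 0$.

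\textbf{Step 1: well-posedness and positivity of the trajectory.} Because $x_0>0$ and $u>0$, the formula $x_u=u+(x_0-u)e^{-t}$ from \eqref{eq:xp_explicit} is a convex combination of $x_0$ and $u$. Hence $x_u(t)\ge\min\{x_0,u\}>0$ for all $t\ge0$. This gives three things:
\begin{itemize}
\item the vector field $1-uy/x_u$ is smooth along the trajectory, and differentiating in $u$ is legitimate;
\item $a_u=u/x_u$ is finite and positive;
\item $y_u\ge 0$ for all $t$, since $\dot y_u|_{y_u=0}=1>0$ and $y_0=1$, so the comparison principle keeps $y_u$ nonnegative.
\end{itemize}

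\textbf{Step 2: sign of $\lambda_u$.} From \eqref{eq:sys8:sca:lambda} the integrand is an exponential and hence positive. So $\lambda_u(t)\ge 0$, with equality only at $t=T$.

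\textbf{Step 3: sign of $g_u$.} Start from $g_u=\partial_xF\,p_u+\partial_uF=(uy_u/x_u^2)p_u-y_u/x_u=y_u(up_u-x_u)/x_u^2$. Use the identity $x_u-up_u=x_0e^{-t}$, which holds because $x_u=up_u+x_0e^{-t}$. This gives $g_u=-x_0e^{-t}y_u/x_u^2$. By Step 1, $y_u\ge0$ and $x_u>0$, so $g_u\le 0$ for all $t\ge0$ and all $u>0$.

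\textbf{Conclusion.} Combining Steps 2 and 3 gives $\lambda_u g_u\le 0$ on $[0,T]$ for every $u>0$. Theorem~\ref{thm:cDR_monotone}~(i) then yields $\partial_u\mathrm{cDR}(u,T)\le 0$, so $u\mapsto\mathrm{cDR}(u,T)$ is nonincreasing.

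I expect no serious obstacle; the only care needed is in Step 1. Nonnegativity of $y_u$ carries the sign of $g_u$, so it must be justified explicitly. Positivity of $x_u$ is also essential, which is why $x_0>0$ is required rather than $x_0\ge0$. Without it, $a_u$ and $\partial_xF$ could blow up if the trajectory started at $x=0$.
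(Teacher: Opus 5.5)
Your proposal is correct and follows essentially the same route as the paper: Theorem~\ref{thm:cDR_monotone}~(i), combining $\lambda_u\ge 0$ with $g_u=-x_0e^{-t}y_u/x_u^2\le 0$, which comes from the identity $x_u-up_u=x_0e^{-t}$. Your Step~1 additionally makes explicit two facts the paper's proof uses without comment: $x_u\ge\min\{x_0,u\}>0$ as a convex combination, and $y_u\ge 0$ by comparison.
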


\begin{proof}
    The sensitivity $q_u(t)=\partial_u y_u(t)$ satisfies $\dot q_u(t)+(u/x_u(t))q_u(t)=g_u(t)$ where $g_u(t)$ is given in \eqref{eq:sys8:sca:g}.
    From \eqref{eq:sys8:sca:lambda}, $\lambda_u(t)\ge 0$, $\forall t\in[0,T], \forall u>0$, and $g_u(t)\le 0$, $\forall t\ge 0, \forall u > 0$.
    Thus $\lambda_u(t)g_u(t)\le 0, \forall t\in[0,T].$
    By Theorem~\ref{thm:cDR_monotone} (i), $\partial_u\mathrm{cDR}(u,T)\le 0.$
    Hence $u\mapsto \mathrm{cDR}(u,T)$ is monotone nonincreasing.
\end{proof}

\begin{figure}[h!]
    \centering
    \subfloat[\label{sys8a}]{\includegraphics[width=0.23\linewidth]{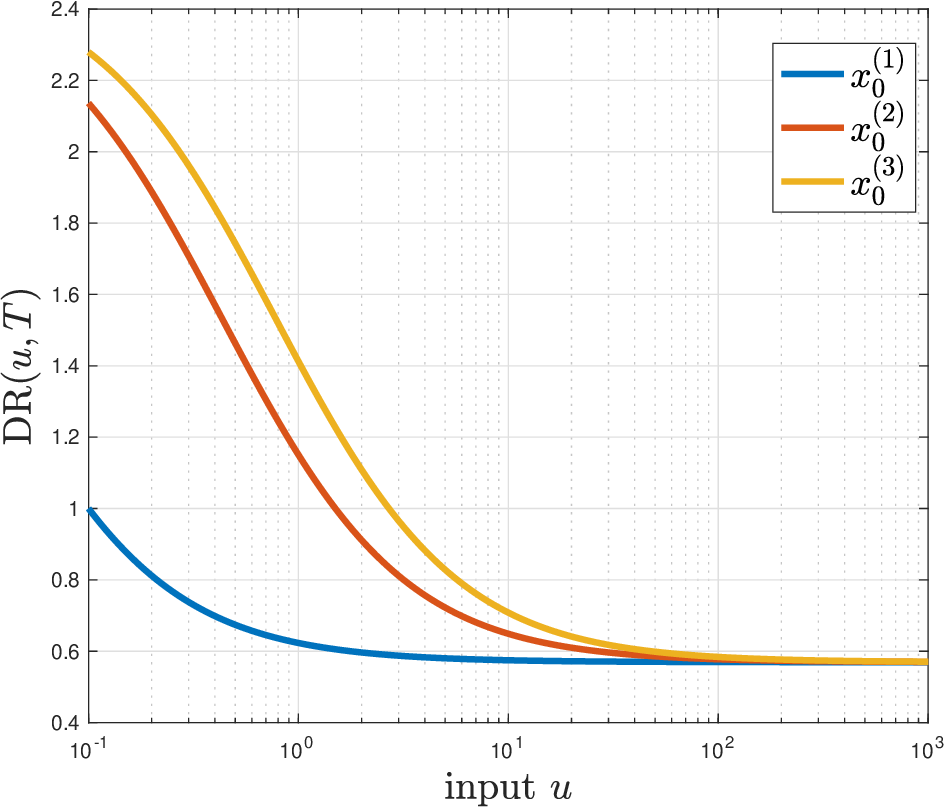}}
    \subfloat[\label{sys8b}]{\includegraphics[width=0.23\linewidth]{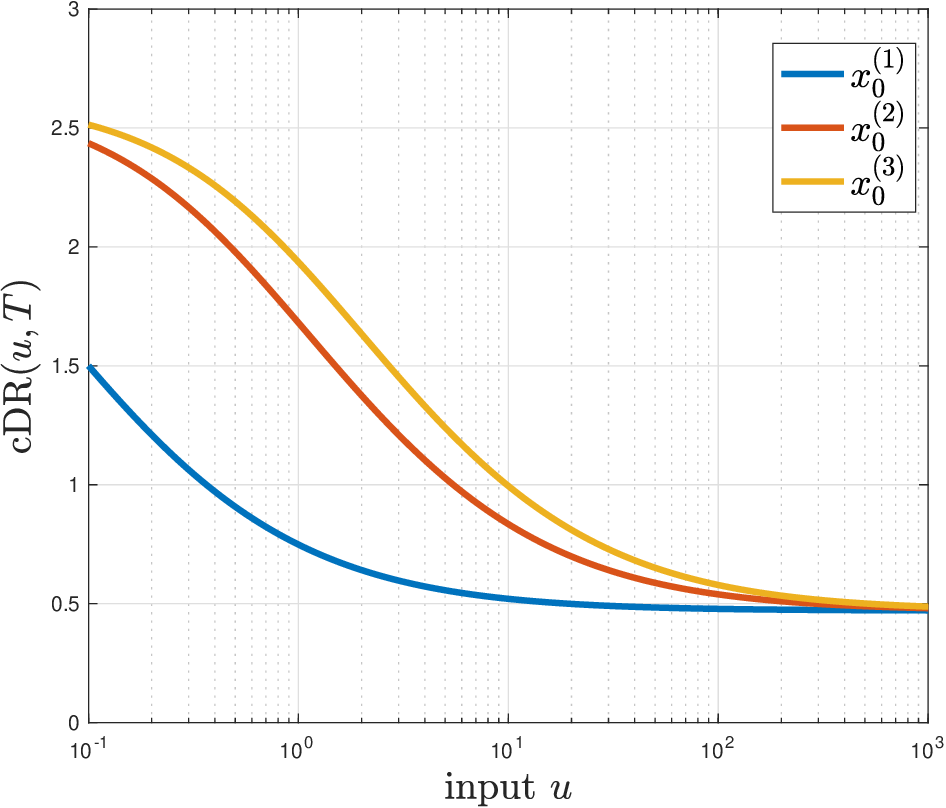}}
    \subfloat[\label{sys8c}]{\includegraphics[width=0.275\linewidth]{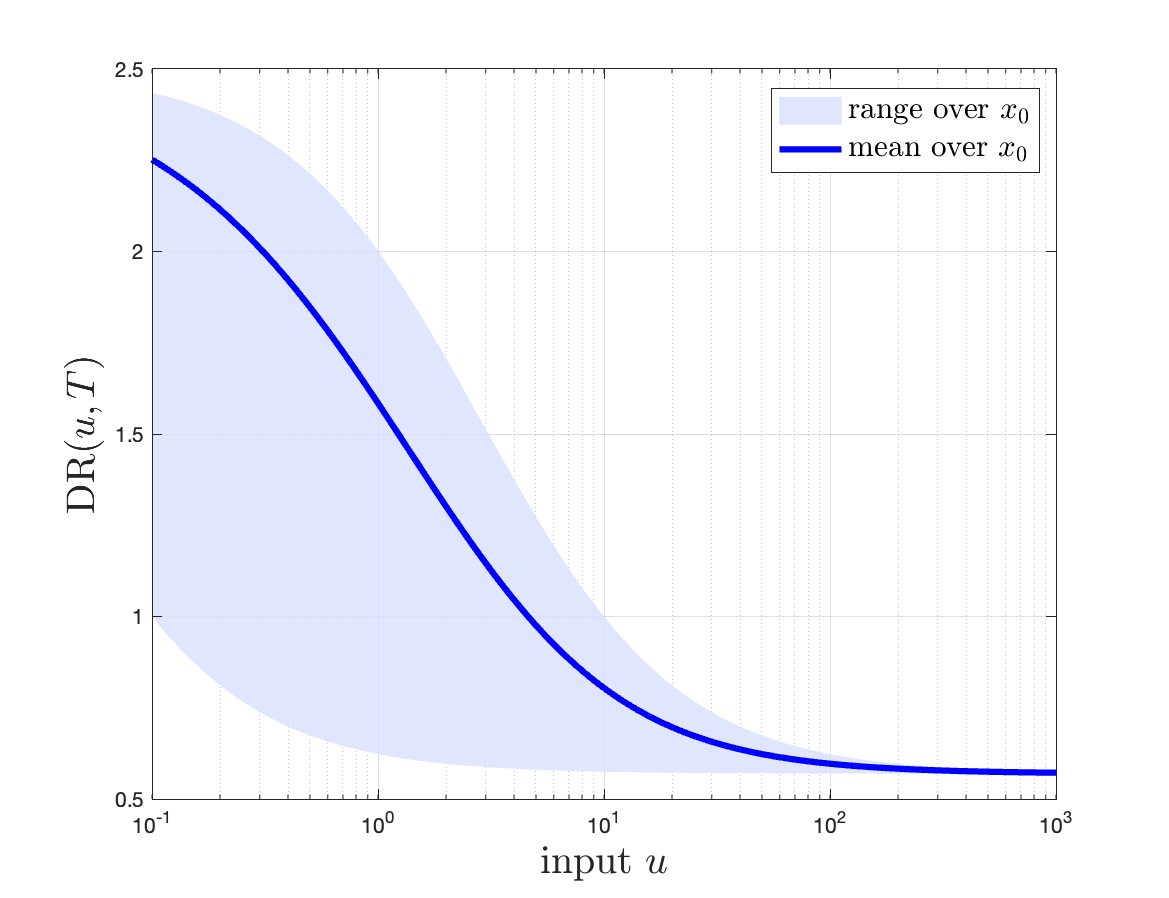}}
    \subfloat[\label{sys8d}]{\includegraphics[width=0.275\linewidth]{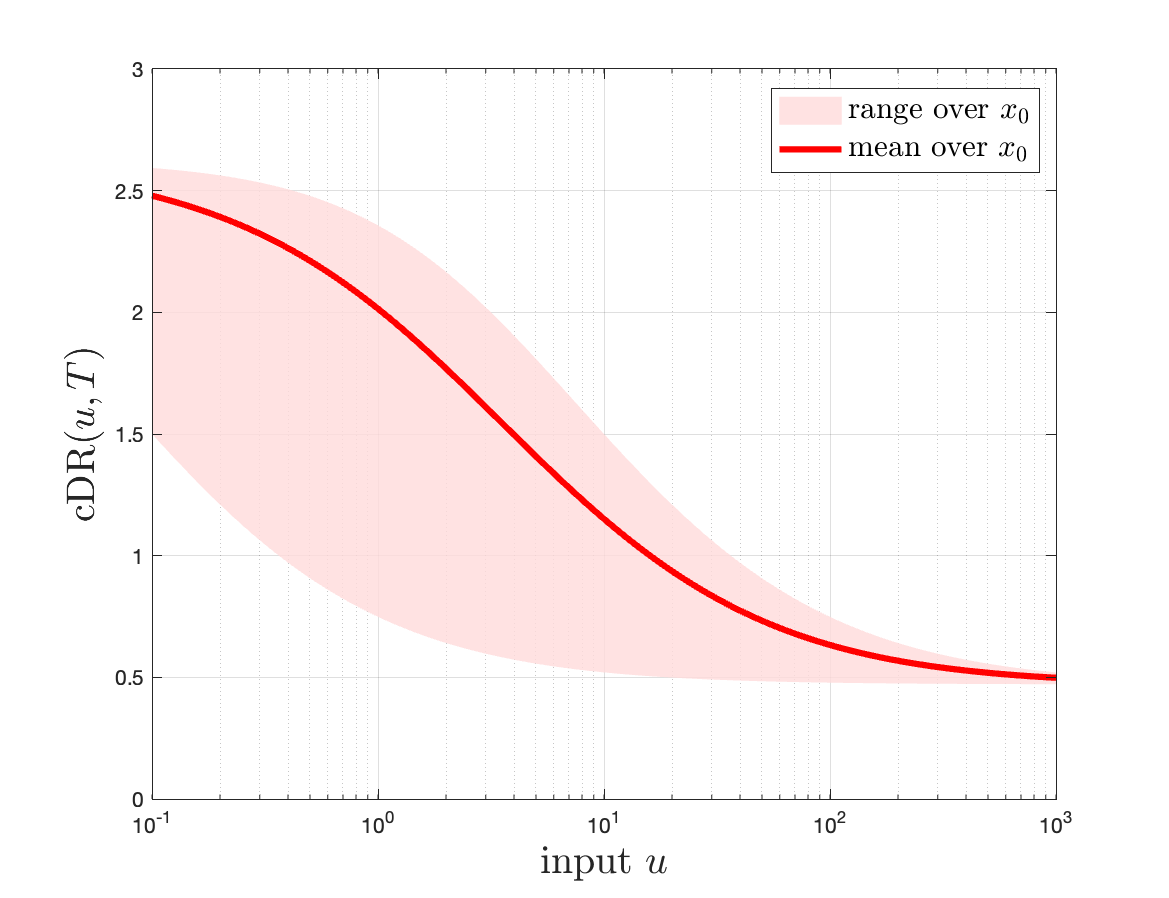}}
    \caption{System 8: Dose response $\mathrm{DR}(u,T)$ and cumulative dose response $\mathrm{cDR}(u,T)$ for $u\in[10^{-1},10^{3}]$. 
    (a)--(b) Responses for three fixed initial conditions $x_0$. 
    (c)--(d) Envelope (min--max band) and mean over $x_0\in[0.1,10]$. 
    The results show that $\mathrm{cDR}(u,T)$ is monotone nonincreasing with respect to $u$.}
    \label{fig:sys8}
\end{figure}

\newpage
\section{Monotonicity of $\mathrm{cDR}(u,T)$ for the generalized IFFM systems}
\label{Apx:vector}

In this appendix, we analyze the monotonicity properties of
$\mathrm{cDR}(u,T)$ for the generalized IFFM systems of Table~\ref{tab:systems}.
For each system, we compute the quantities
\begin{equation*}
    a_u(t), \qquad g_u(t), \qquad
    G_u(t):=\int_0^t g_u(s)\,ds,
\end{equation*}
and apply Theorem~\ref{thm:cDR_monotone} to determine the sign of
$\partial_u\mathrm{cDR}(u,T)$.

For all systems, we consider $\dot x_u=Ax_u+bu,$ with initial condition $x_u(0)=x_0$, independent of $u$. 
Differentiating the $x$-subsystem with respect to $u$ gives $\dot p_u = Ap_u + b$ with $p_u(0)=0$
where $p_u:=\partial_u x_u\in\mathbb{R}^n$. Hence,
\begin{equation}\label{eq:p_vec_explicit}
p_u(t)=\int_0^t e^{A(t-s)}\,b\,ds
      = A^{-1}(e^{At}-I)b.
\end{equation}
Since $A$ is Metzler, $e^{At}\ge 0$ for all $t\ge 0$, and since $b\ge 0$, it follows that
$p_u(t)\ge 0, \forall t\ge 0.$ Moreover, by variation of parameters formula,
\begin{equation}\label{eq:x_vec_explicit}
    x_u(t)=e^{At}x_0+\int_0^t e^{A(t-s)}bu\,ds
    = e^{At}x_0 + up_u(t).
\end{equation}

Let $q_u:=\partial_u y_u$. If $\dot y_u = F(x_u,y_u,u),$ then differentiating with respect to $u$ yields
\begin{equation}\label{eq:q_vec_general}
    \dot q_u = \partial_x F(x_u,y_u,u)\,p_u
    +\partial_y F(x_u,y_u,u)\,q_u +\partial_u F(x_u,y_u,u).
\end{equation}
Rewriting \eqref{eq:q_vec_general} in the form $\dot q_u+a_u(t)q_u=g_u(t),$ we identify
\begin{equation*}
    a_u(t)=-\partial_yF(x_u,y_u,u), \qquad
    g_u(t)=\partial_xF(x_u,y_u,u)\,p_u+\partial_uF(x_u,y_u,u).
\end{equation*}

\subsection*{Vector case of System 1}

\begin{subequations}
We consider the vector analogue of System~1:
\begin{equation}\label{eq:sys1-vector}
    \dot x_u = A x_u + bu, \qquad
    \dot y_u = \frac{c^\top x_u}{\beta u} - d y_u,
\end{equation}
where $A\in\mathbb{R}^{n\times n}$ is Metzler and Hurwitz, $b,c\in\mathbb{R}^n_+$, and $\beta,d\in\mathbb{R}_+$. 
We assume that the initial condition $x_u(0)=x_0$ is independent of $u$, with $x_0\ge 0$, and that $y_u(0)=y_0$ is also independent of $u$.

Let $q_u:=\partial_u y_u\in\mathbb{R}$. Differentiating the $y$-equation in \eqref{eq:sys1-vector} with respect to $u$ gives
\begin{equation*}
    \dot q_u = \frac{c^\top p_u}{\beta u}-\frac{c^\top x_u}{\beta u^2}-d q_u,
    \qquad q_u(0)=0.
\end{equation*}
Therefore, $q_u$ satisfies $\dot q_u + a_u q_u = g_u$, $q_u(0)=0$ with
\begin{equation*}
    a_u=d, \qquad
    g_u=\frac{c^\top p_u}{\beta u}-\frac{c^\top x_u}{\beta u^2}.
\end{equation*}
Using \eqref{eq:x_vec_explicit}, we rewrite $g_u$ as
\begin{equation}\label{eq:sys1:vec:g}
    g_u
    = \frac{uc^\top p_u-c^\top x_u}{\beta u^2}
    = \frac{uc^\top p_u-c^\top\!\bigl[e^{At}x_0+up_u\bigr]}{\beta u^2}
    = -\frac{c^\top e^{At}x_0}{\beta u^2}.
\end{equation}
Since $e^{At}\ge 0$, $c\in\mathbb{R}^n_+$, and $x_0\ge 0$, we conclude that $g_u(t)\le 0, \forall t\ge 0,\forall u > 0.$
Integrating $g_u$ over $[0,t]$, we obtain
\begin{equation}\label{eq:sys1:vec:G}
    G_u:=\int_0^t g_u(s)\,ds = -\frac{1}{\beta u^2}\int_0^t c^\top e^{As}x_0\,ds \le 0, \quad \forall t\ge 0,\forall u > 0.
\end{equation}
Since $a_u=d$ is constant, the kernel $\lambda_u$ defined in Theorem~\ref{thm:cDR_monotone} is
\begin{equation}\label{eq:sys1:vec:lambda}
    \lambda_u = \int_t^T e^{-d(s-t)}\,ds = \frac{1-e^{-d(T-t)}}{d}.
\end{equation}
Therefore, $\lambda_u\ge 0$ for all $t\in[0,T]$ and $u>0$.
\end{subequations}

\begin{proposition}
    Consider the system \eqref{eq:sys1-vector}, where $A$ is Metzler and Hurwitz, $b,c\in\mathbb{R}^n_+$, $\beta,d\in\mathbb{R}_+$, and $x_0\ge 0$ is independent of $u$.
    Then, for every $T>0$, $\mathrm{cDR}(u,T)$ is monotone nonincreasing with respect to $u>0$.
\end{proposition}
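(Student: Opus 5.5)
The plan is to invoke Theorem~\ref{thm:cDR_monotone}~(i), exactly as in the scalar Proposition~\ref{prop:sys1:sca}. All the ingredients have already been computed just above the statement, so the proof reduces to collecting sign information uniformly in $u>0$.

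First, the sensitivity $q_u=\partial_u y_u$ satisfies \eqref{eq:q_general} with $a_u=d$ and with $g_u$ given by \eqref{eq:sys1:vec:g}. Substituting the decomposition \eqref{eq:x_vec_explicit}, $x_u=e^{At}x_0+up_u$, the terms involving $p_u$ cancel exactly. This leaves $g_u=-c^\top e^{At}x_0/(\beta u^2)$. Since $A$ is Metzler, $e^{At}\ge 0$ for all $t\ge0$. Together with $c\in\mathbb{R}^n_+$, $x_0\ge0$ and $\beta>0$, this gives $g_u(t)\le 0$ for all $t\ge 0$ and every $u>0$. The sign is therefore fixed and independent of $u$.

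Second, because $a_u=d$ is a positive constant, the kernel is explicit by \eqref{eq:sys1:vec:lambda}: $\lambda_u(t)=(1-e^{-d(T-t)})/d\ge 0$ on $[0,T]$, again independently of $u$. Hence $\lambda_u g_u\le0$ on $[0,T]$. The integral representation \eqref{eq:int_by_part} from the proof of Theorem~\ref{thm:cDR_monotone} then gives $\partial_u\mathrm{cDR}(u,T)=\int_0^T\lambda_u g_u\,dt\le 0$ for every $u>0$. It follows that $u\mapsto\mathrm{cDR}(u,T)$ is nonincreasing.

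There is no real obstacle here. The only point needing care is justifying the differentiation under the integral and the sensitivity equation, i.e.\ that $y_u$ is $C^1$ in $u$. This follows from the smooth dependence of ODE solutions on parameters, since the right-hand side is smooth for $u>0$. Note also that the initial condition $y_0$ plays no role, because $g_u$ does not involve $y_u$. So, unlike Lemma~\ref{lem:IFFL1}, no restriction $y_0=y_{\mathrm{ss}}$ or $x_0\in\operatorname{span}\{-A^{-1}b\}$ is needed.
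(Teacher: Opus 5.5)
Your proposal is correct and follows the paper's proof essentially verbatim: you apply Theorem~\ref{thm:cDR_monotone}~(i) with $g_u=-c^\top e^{At}x_0/(\beta u^2)\le 0$ from \eqref{eq:sys1:vec:g} and $\lambda_u=(1-e^{-d(T-t)})/d\ge 0$ from \eqref{eq:sys1:vec:lambda}, which gives $\partial_u\mathrm{cDR}(u,T)\le 0$ for every $u>0$. Your side remark on $y_0$ is also right, with one qualifier: $y_0$ must not depend on $u$, so that $q_u(0)=0$. The paper assumes this, and it holds for $y_0=y_{\mathrm{ss}}$ here as well.
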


\begin{proof}
    The sensitivity $q_u(t)=\partial_u y_u(t)$ satisfies $\dot q_u(t) + d q_u(t) = g_u(t)$ where $g_u(t)$ is given in \eqref{eq:sys1:vec:g}. From \eqref{eq:sys1:vec:lambda}, $\lambda_u\ge 0, \forall t\in[0,T], \forall u > 0$ and $g_u(t)\le 0$, $\forall t\ge 0, \forall u > 0$. Thus $\lambda_u(t)g_u(t)\le 0, \forall t\in[0,T]$. By Theorem~\ref{thm:cDR_monotone} (i), $\partial_u \mathrm{cDR}(u,T) \le 0.$
    Hence $u\mapsto \mathrm{cDR}(u,T)$ is monotone nonincreasing.
\end{proof}

\begin{figure}[h!]
    \centering
    \subfloat[\label{v1a}]{\includegraphics[width=0.3\linewidth]{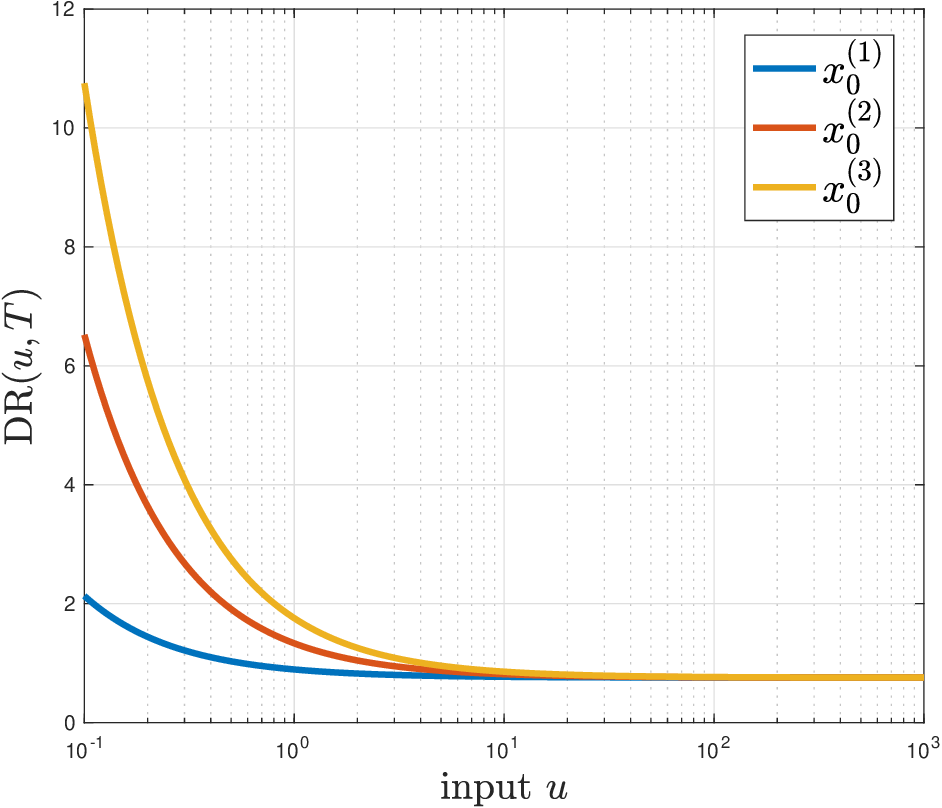}}
    \subfloat[\label{v1b}]{\includegraphics[width=0.3\linewidth]{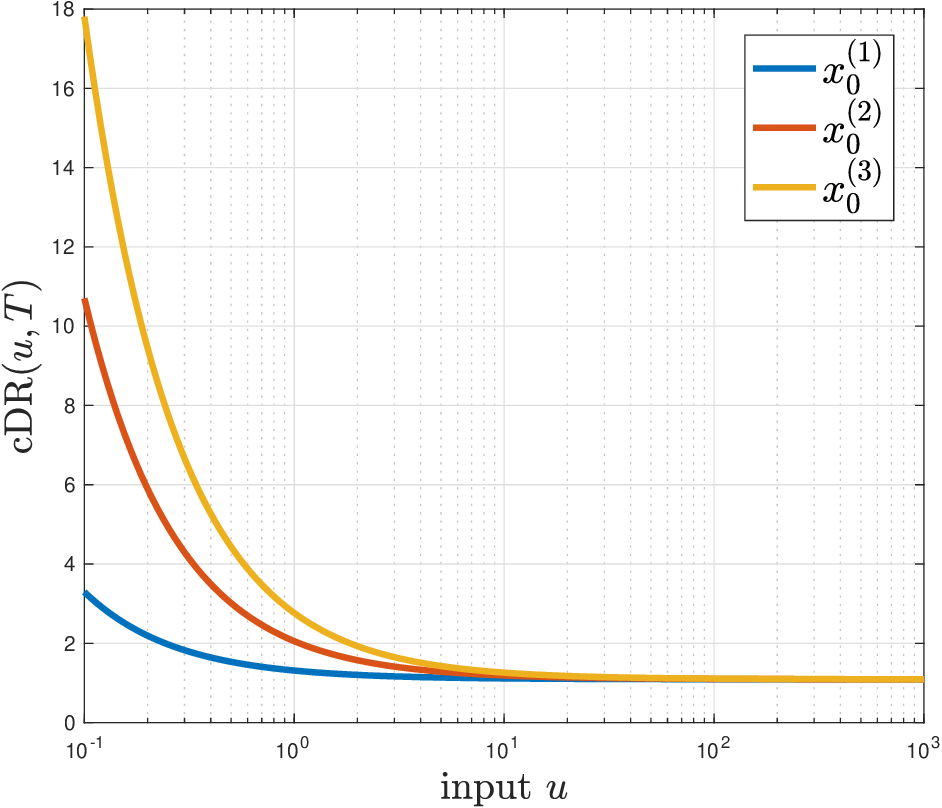}}
    \caption{Vector System 1: Dose response $\mathrm{DR}(u,T)$ and cumulative dose response $\mathrm{cDR}(u,T)$ for $u\in[10^{-1},10^{3}]$. 
    Results are shown for three fixed initial conditions $x_0$. 
    The curves illustrate that $\mathrm{cDR}(u,T)$ is monotone nonincreasing with respect to $u$.}
    \label{fig:v_sys1}
\end{figure}

\subsection*{Vector case of System 3}

\begin{subequations}
We consider the vector analogue of System~3:
\begin{equation}\label{eq:sys3-vector}
    \dot x_u = A x_u + bu, \qquad
    \dot y_u = \frac{\beta u}{K+c^\top x_u} - d y_u,
\end{equation}
where $A\in\mathbb{R}^{n\times n}$ is Metzler and Hurwitz, $b,c\in\mathbb{R}^n_+$, and $K,\beta,d\in\mathbb{R}_+$. 
We assume that the initial condition $x_u(0)=x_0$ is independent of $u$, with $x_0\ge 0$, and that $y_u(0)=y_0$ is also independent of $u$.

Let $q_u:=\partial_u y_u\in\mathbb{R}$. Differentiating the $y$-equation in \eqref{eq:sys3-vector} with respect to $u$ gives
\begin{equation*}
    \dot q_u = \frac{\beta}{K+c^\top x_u}
    -\frac{\beta uc^\top p_u}{(K+c^\top x_u)^2}
    -d q_u,
    \qquad q_u(0)=0.
\end{equation*}
Therefore, $q_u$ satisfies $\dot q_u + a_u q_u = g_u$, $q_u(0)=0$ with
\begin{equation*}
    a_u=d, \qquad
    g_u=\frac{\beta}{K+c^\top x_u}
    -\frac{\beta uc^\top p_u}{(K+c^\top x_u)^2}.
\end{equation*}
Using \eqref{eq:x_vec_explicit}, we rewrite $g_u$ as
\begin{equation}\label{eq:sys3:vec:g}
    g_u
    = \frac{\beta\bigl[K+c^\top x_u-uc^\top p_u\bigr]}{(K+c^\top x_u)^2}
    = \frac{\beta\bigl[K+c^\top e^{At}x_0\bigr]}{(K+c^\top x_u)^2}.
\end{equation}
Since $K>0$, $e^{At}\ge 0$, $c\in\mathbb{R}^n_+$, and $x_0\ge 0$, we conclude that $g_u\ge 0, \forall t\ge 0, \forall u>0.$
Integrating $g_u$ over $[0,t]$, we obtain
\begin{equation}\label{eq:sys3:vec:G}
    G_u:=\int_0^t g_u(s)\,ds = \beta\int_0^t \frac{K+c^\top e^{At}x_0}{(K+c^\top x_u)^2}\,ds \ge 0, 
    \quad \forall t\ge 0,\forall u > 0.
\end{equation}
Since $a_u=d$ is constant, the kernel $\lambda_u$ defined in Theorem~\ref{thm:cDR_monotone} is
\begin{equation}\label{eq:sys3:vec:lambda}
    \lambda_u = \int_t^T e^{-d(s-t)}\,ds = \frac{1-e^{-d(T-t)}}{d}.
\end{equation}
Therefore, $\lambda_u\ge 0$ for all $t\in[0,T]$ and $u>0$.
\end{subequations}

\begin{proposition}\label{prop:sys3:vec}
    Consider the system \eqref{eq:sys3-vector}, where $A$ is Metzler and Hurwitz, $b,c\in\mathbb{R}^n_+$, $K,\beta,d\in\mathbb{R}_+$, and $x_0\ge 0$ is independent of $u$.
    Then, for every $T>0$, $\mathrm{cDR}(u,T)$ is monotone nondecreasing with respect to $u>0$.
\end{proposition}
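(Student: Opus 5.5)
The plan is to apply Theorem~\ref{thm:cDR_monotone}~(i), using the quantities already computed for the vector System~3 in \eqref{eq:sys3:vec:g}--\eqref{eq:sys3:vec:lambda}. The argument mirrors the scalar Proposition~\ref{prop:sys3:sca}.

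First, confirm that the sensitivity $q_u=\partial_u y_u$ satisfies \eqref{eq:q_general} with constant coefficient $a_u=d$ and forcing $g_u$ as derived above. The derivation uses $x_u=e^{At}x_0+up_u$ from \eqref{eq:x_vec_explicit}, which relies on $x_0$ being independent of $u$. Then $K+c^\top x_u-uc^\top p_u=K+c^\top e^{At}x_0$, so $g_u$ takes the closed form \eqref{eq:sys3:vec:g}.

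Second, establish the sign of $g_u$. Since $A$ is Metzler, $e^{At}\ge 0$ entrywise for $t\ge 0$. Combined with $c\in\mathbb{R}^n_+$ and $x_0\ge 0$, this gives $c^\top e^{At}x_0\ge 0$, so the numerator $\beta(K+c^\top e^{At}x_0)$ is positive. The denominator $(K+c^\top x_u)^2$ is positive because $x_u\ge 0$ by positivity of the $x$-subsystem and $K>0$. This step is also the only place where care is needed: $K>0$ ensures both that the right-hand side is well defined and that $g_u>0$ strictly, uniformly in $u>0$ and $t\in[0,T]$.

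Third, note that $\lambda_u=(1-e^{-d(T-t)})/d\ge 0$ on $[0,T]$ by \eqref{eq:sys3:vec:lambda}. Then $\lambda_u g_u\ge 0$ on $[0,T]$ with signs independent of $u$. Plugging this into the representation \eqref{eq:int_by_part} gives $\partial_u\mathrm{cDR}(u,T)=\int_0^T\lambda_u g_u\,dt\ge 0$ for every $u>0$, so the map $u\mapsto\mathrm{cDR}(u,T)$ is nondecreasing. Alternatively, as in Section~\ref{subsec:{\IFFM}2}, $q_u\ge 0$ follows directly by comparison, which gives monotonicity already at the level of DR.
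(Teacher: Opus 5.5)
Your proposal is correct and follows the paper's own proof. Like the paper, you take $a_u=d$, reduce $g_u$ to the closed form $\beta(K+c^\top e^{At}x_0)/(K+c^\top x_u)^2\ge 0$ using $x_u=e^{At}x_0+up_u$, note that $\lambda_u=(1-e^{-d(T-t)})/d\ge 0$, and conclude with Theorem~\ref{thm:cDR_monotone}~(i). Your closing remark, that $q_u\ge 0$ follows by comparison, is exactly the DR-level argument the paper gives in the main text for IFFM2, which is this same system.
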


\begin{proof}
    The sensitivity $q_u(t)=\partial_u y_u(t)$ satisfies $\dot q_u(t) + d q_u(t) = g_u(t)$ where $g_u(t)$ is given in \eqref{eq:sys3:vec:g}. From \eqref{eq:sys3:vec:lambda}, $\lambda_u\ge 0, \forall t\in[0,T], \forall u > 0$ and $g_u(t)\ge 0$, $\forall t\ge 0, \forall u > 0$. Thus $\lambda_u(t)g_u(t)\ge 0, \forall t\in[0,T]$. By Theorem~\ref{thm:cDR_monotone} (i), $\partial_u \mathrm{cDR}(u,T) \ge 0.$
    Hence $u\mapsto \mathrm{cDR}(u,T)$ is monotone nondecreasing.
\end{proof}

\begin{figure}[h!]
    \centering
    \subfloat[\label{v3a}]{\includegraphics[width=0.3\linewidth]{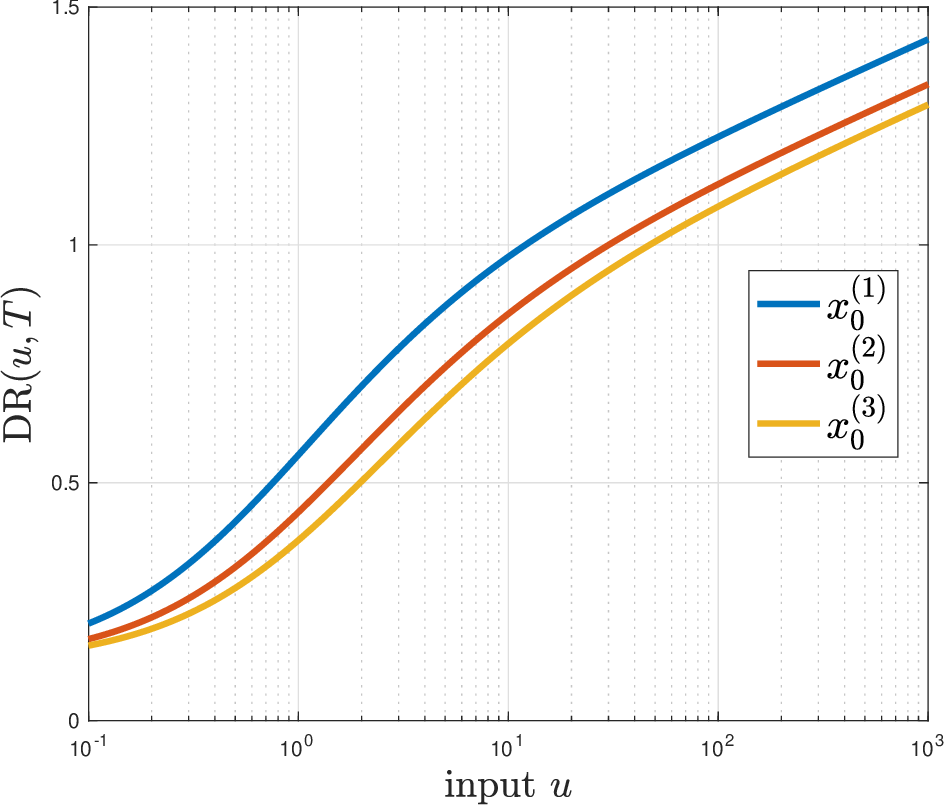}}
    \subfloat[\label{v3b}]{\includegraphics[width=0.3\linewidth]{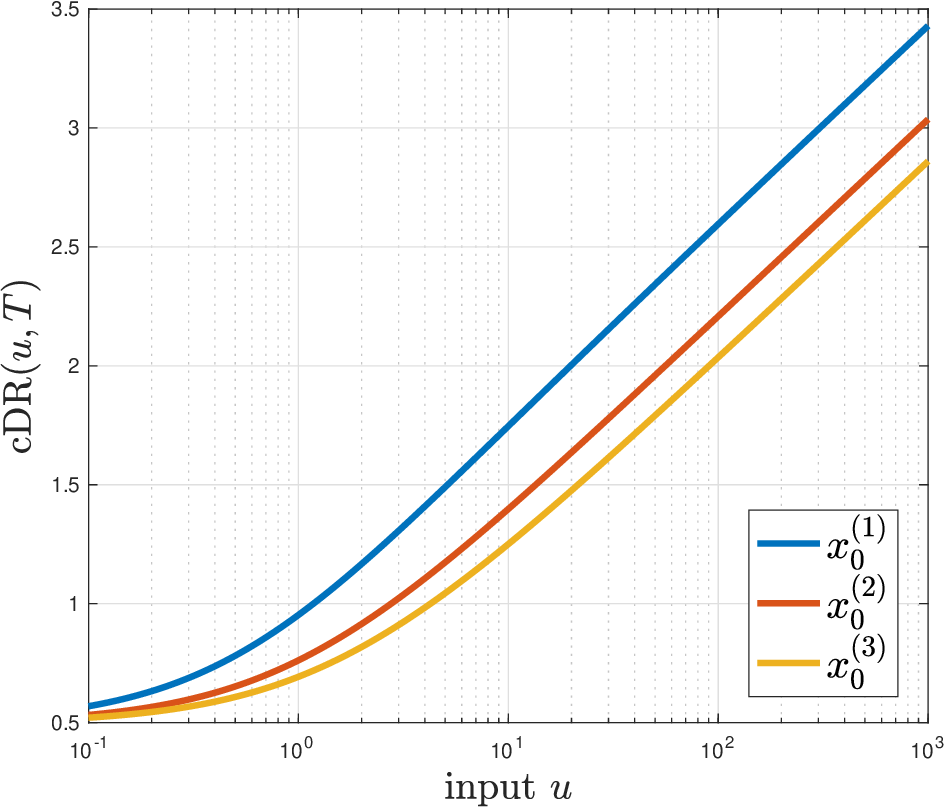}}
    \caption{Vector System 3: Dose response $\mathrm{DR}(u,T)$ and cumulative dose response $\mathrm{cDR}(u,T)$ for $u\in[10^{-1},10^{3}]$. 
    Results are shown for three fixed initial conditions $x_0$. 
    The curves illustrate that $\mathrm{cDR}(u,T)$ is monotone nondecreasing with respect to $u$.}
    \label{fig:v_sys3}
\end{figure}

\subsection*{Vector case of System 6}

\begin{subequations}
We consider the vector analogue of System~6:
\begin{equation}\label{eq:sys6-vector}
    \dot x_u = A x_u + bu, \qquad
    \dot y_u = d-\frac{c^\top x_u}{\beta u} y_u,
\end{equation}
where $A\in\mathbb{R}^{n\times n}$ is Metzler and Hurwitz, $b,c\in\mathbb{R}^n_+$, and $\beta,d\in\mathbb{R}_+$. 
We assume that the initial condition $x_u(0)=x_0$ is independent of $u$, with $x_0\ge 0$, and that $y_u(0)=y_0$ is also independent of $u$.

Let $q_u:=\partial_u y_u\in\mathbb{R}$. Differentiating the $y$-equation in \eqref{eq:sys6-vector} with respect to $u$ gives
\begin{equation*}
    \dot q_u = -\frac{c^\top p_u}{\beta u}y_u
    +\frac{c^\top x_u}{\beta u^2}y_u
    -\frac{c^\top x_u}{\beta u}q_u,
    \qquad q_u(0)=0.
\end{equation*}
Therefore, $q_u$ satisfies $\dot q_u + a_u q_u = g_u$, $q_u(0)=0$ with
\begin{equation*}
    a_u=\frac{c^\top x_u}{\beta u}, \qquad
    g_u=-\frac{c^\top p_u}{\beta u}y_u+\frac{c^\top x_u}{\beta u^2}y_u.
\end{equation*}
Using \eqref{eq:x_vec_explicit}, we rewrite $g_u$ as
\begin{equation}\label{eq:sys6:vec:g}
    g_u
    = \frac{\bigl(c^\top x_u-u c^\top p_u\bigr)y_u}{\beta u^2}
    = \frac{c^\top e^{At}x_0}{\beta u^2}y_u.
\end{equation}
Since $e^{At}\ge 0$, $c\in\mathbb{R}^n_+$, $x_0\ge 0$, and $y_u\ge 0$, we conclude that $g_u\ge 0, \forall t\ge 0, \forall u > 0.$
Integrating $g_u$ over $[0,t]$, we obtain
\begin{equation}\label{eq:sys6:vec:G}
    G_u:=\int_0^t g_u(s)\,ds
    = \frac{1}{\beta u^2}\int_0^t c^\top e^{As}x_0\,y_u(s)\,ds \ge 0, \quad \forall t\ge 0,\forall u > 0.
\end{equation}
Since $a_u=(c^\top x_u)/(\beta u)\ge 0$, the kernel $\lambda_u$ defined in Theorem~\ref{thm:cDR_monotone} satisfies
\begin{equation}\label{eq:sys6:vec:lambda}
    \lambda_u = \int_t^T \exp\!\left(-\int_t^s \frac{c^\top x_u(\tau)}{\beta u}\,d\tau\right)\,ds.
\end{equation}
Therefore, $\lambda_u\ge 0$ for all $t\in[0,T]$ and $u>0$.
\end{subequations}

\begin{proposition}\label{prop:sys6:vec}
    Consider the system \eqref{eq:sys6-vector}, where $A$ is Metzler and Hurwitz, $b,c\in\mathbb{R}^n_+$, $\beta,d\in\mathbb{R}_+$, and $x_0\ge 0$ is independent of $u$.
    Then, for every $T>0$, $\mathrm{cDR}(u,T)$ is monotone nondecreasing with respect to $u>0$.
\end{proposition}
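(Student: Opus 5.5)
The plan is to apply Theorem~\ref{thm:cDR_monotone}~(i) directly, using the two facts established just before the statement. From the sensitivity computation, $q_u=\partial_u y_u$ satisfies $\dot q_u+a_uq_u=g_u$ with $q_u(0)=0$, where $a_u=c^\top x_u/(\beta u)$ and $g_u$ is given by \eqref{eq:sys6:vec:g}. Substituting \eqref{eq:x_vec_explicit} produces the cancellation $c^\top x_u-uc^\top p_u=c^\top e^{At}x_0$. Hence
\[
g_u=\frac{c^\top e^{At}x_0}{\beta u^2}\,y_u .
\]

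The first step is to check the sign of each factor in this expression. Since $A$ is Metzler, $e^{At}\ge 0$; in addition $c\in\mathbb{R}^n_+$ and $x_0\ge 0$, so $c^\top e^{At}x_0\ge 0$. The only point needing justification is $y_u\ge 0$. This follows from forward invariance of the nonnegative half-line for the $y$-equation in \eqref{eq:sys6-vector}: at $y_u=0$ we have $\dot y_u=d>0$, so with $y_0\ge 0$ the comparison principle gives $y_u(t)\ge 0$ for all $t\ge 0$. Therefore $g_u\ge 0$ on $[0,T]$ for every $u>0$.

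The second step concerns the kernel. Because $x_u\ge 0$ and $c>0$, we have $a_u\ge 0$, so the integrand in \eqref{eq:sys6:vec:lambda} is positive and $\lambda_u\ge 0$ on $[0,T]$. The sign of $\lambda_u$ is fixed, independent of $u$, exactly as \eqref{eq:sys6:vec:lambda} records.

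With both signs fixed, the representation \eqref{eq:int_by_part} gives $\partial_u\mathrm{cDR}(u,T)=\int_0^T\lambda_u g_u\,dt\ge 0$ for all $u>0$. Consequently $u\mapsto\mathrm{cDR}(u,T)$ is nondecreasing. No step here is a real obstacle; the only care needed is to verify positivity of $y_u$ explicitly rather than assume it, since $g_u$ carries the factor $y_u$.
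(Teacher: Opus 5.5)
Your proposal is correct and follows essentially the same route as the paper: Theorem~\ref{thm:cDR_monotone}~(i), using the cancellation $g_u = c^\top e^{At}x_0\,y_u/(\beta u^2)\ge 0$ together with $\lambda_u\ge 0$. Your explicit forward-invariance argument for $y_u\ge 0$, namely $\dot y_u = d>0$ at $y_u=0$, fills in a step the paper only asserts. Note also that $\lambda_u\ge 0$ holds because the integrand is an exponential, so it does not actually require $a_u\ge 0$.
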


\begin{proof}
    The sensitivity $q_u(t)=\partial_u y_u(t)$ satisfies $\dot q_u(t) + \dfrac{c^\top x_u(t)}{\beta u} q_u(t) = g_u(t)$ where $g_u(t)$ is given in \eqref{eq:sys6:vec:g}. From \eqref{eq:sys6:vec:lambda}, $\lambda_u\ge 0, \forall t\in[0,T], \forall u > 0$ and $g_u(t)\ge 0$, $\forall t\ge 0, \forall u > 0$. Thus $\lambda_u(t)g_u(t)\ge 0, \forall t\in[0,T]$. By Theorem~\ref{thm:cDR_monotone} (i), $\partial_u \mathrm{cDR}(u,T) \ge 0.$
    Hence $u\mapsto \mathrm{cDR}(u,T)$ is monotone nondecreasing.
\end{proof}

\begin{figure}[h!]
    \centering
    \subfloat[\label{v6a}]{\includegraphics[width=0.3\linewidth]{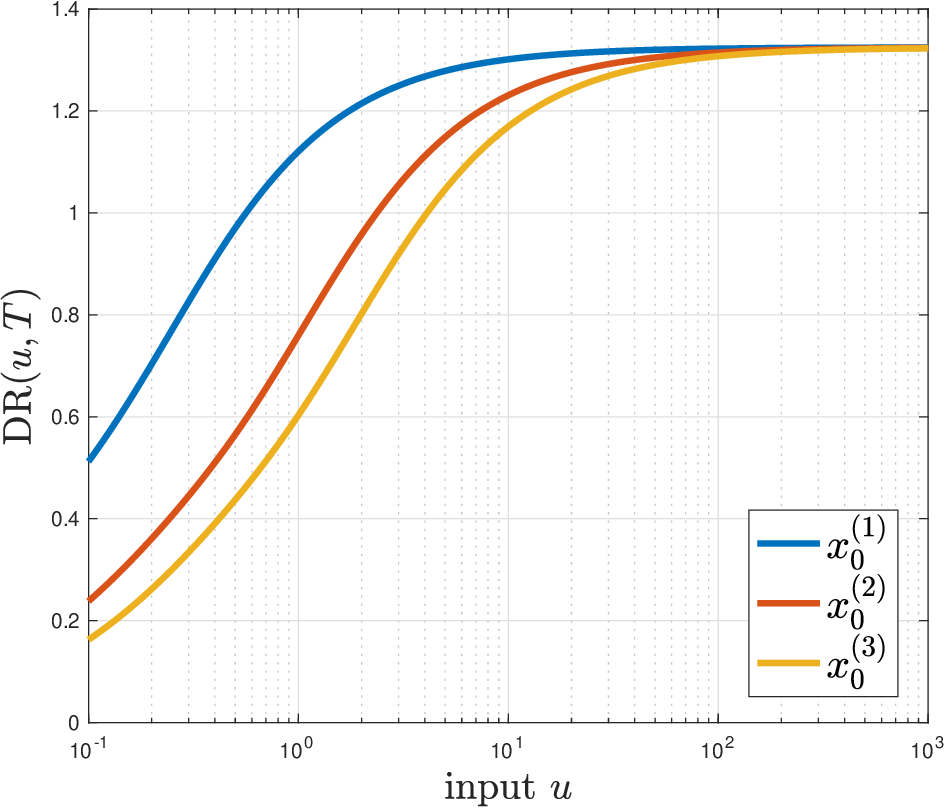}}
    \subfloat[\label{v6b}]{\includegraphics[width=0.3\linewidth]{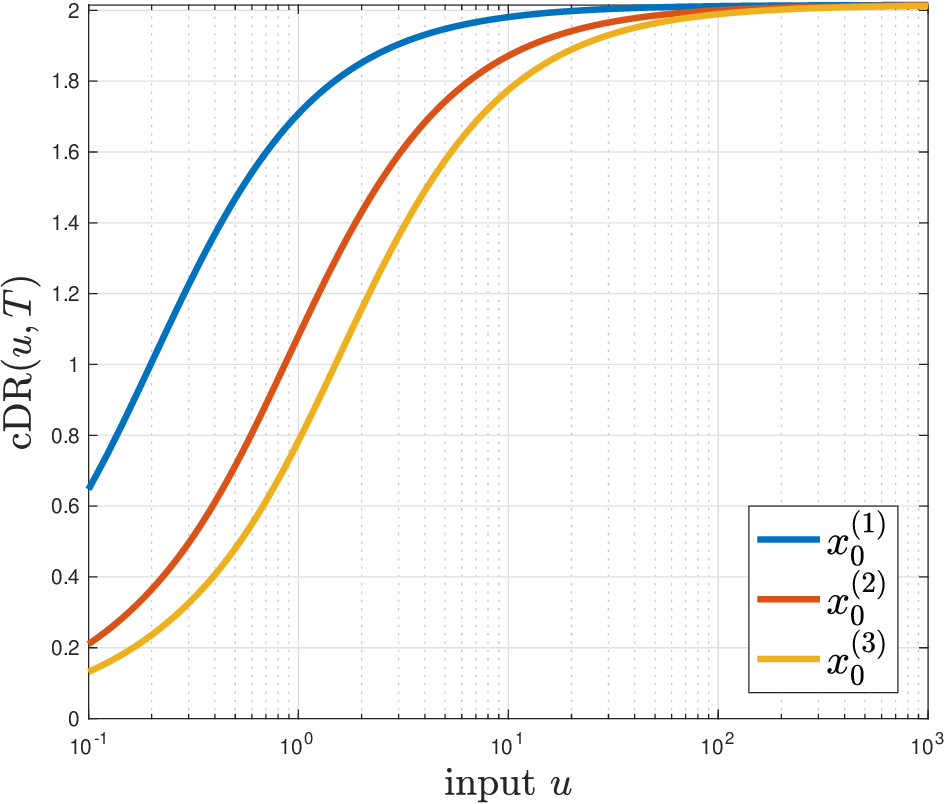}}
    \caption{Vector System 6: Dose response $\mathrm{DR}(u,T)$ and cumulative dose response $\mathrm{cDR}(u,T)$ for $u\in[10^{-1},10^{3}]$. 
    Results are shown for three fixed initial conditions $x_0$. 
    The curves illustrate that $\mathrm{cDR}(u,T)$ is monotone nondecreasing with respect to $u$.}
    \label{fig:v_sys6}
\end{figure}

\subsection*{Vector case of System 7}

\begin{subequations}
We consider the vector analogue of System~7:
\begin{equation}\label{eq:sys7-vector}
    \dot x_u = A x_u + bu, \qquad
    \dot y_u = \frac{1}{\beta u} - \frac{d y_u}{K+c^\top x_u},
\end{equation}
where $A\in\mathbb{R}^{n\times n}$ is Metzler and Hurwitz, $b,c\in\mathbb{R}^n_+$, and $K,\beta,d\in\mathbb{R}_+$. 
We assume that the initial condition $x_u(0)=x_0$ is independent of $u$, with $x_0\ge 0$, and that $y_u(0)=y_0$ is also independent of $u$.

Let $q_u:=\partial_u y_u\in\mathbb{R}$. Differentiating the $y$-equation in \eqref{eq:sys7-vector} with respect to $u$ gives
\begin{equation*}
    \dot q_u = -\frac{1}{\beta u^2}
    +\frac{dc^\top p_u}{(K+c^\top x_u)^2}y_u
    -\frac{d}{K+c^\top x_u}q_u,
    \qquad q_u(0)=0.
\end{equation*}
Therefore, $q_u$ satisfies $\dot q_u + a_u q_u = g_u$, $q_u(0)=0$ with
\begin{equation}\label{eq:sys7:vec:ag}
    a_u=\frac{d}{K+c^\top x_u}, \qquad
    g_u=\frac{dc^\top p_u}{(K+c^\top x_u)^2}y_u-\frac{1}{\beta u^2}.
\end{equation}
Integrating $g_u$ over $[0,t]$, we obtain
\begin{equation}\label{eq:sys7:vec:ag}
    G_u:=\int_0^t g_u(s)\,ds
    =\int_0^t\left[\frac{dc^\top p_u(s)}{(K+c^\top x_u(s))^2}y_u(s)-\frac{1}{\beta u^2}\right]ds.
\end{equation}
Since $a_u=d/(K+c^\top x_u)\ge 0$, the kernel $\lambda_u$ defined in Theorem~\ref{thm:cDR_monotone} satisfies
\begin{equation}\label{eq:sys7:vec:lambda}
    \lambda_u = \int_t^T \exp\!\left(-\int_t^s \frac{d}{K+c^\top x_u(\tau)}\,d\tau\right)\,ds.
\end{equation}
Therefore, $\lambda_u\ge 0$ for all $t\in[0,T]$ and $u>0$.
\end{subequations}

\begin{proposition}\label{prop:sys7:vec}
    Consider the system \eqref{eq:sys7-vector}, where $A$ is Metzler and Hurwitz, $b,c\in\mathbb{R}^n_+$, $K,\beta,d\in\mathbb{R}_+$, and $x_0\ge 0$ is independent of $u$. Then there exist $u_-,u_+>0$ such that
    \begin{equation*}
        \partial_u\mathrm{cDR}(u_-,T)<0, \qquad \partial_u\mathrm{cDR}(u_+,T)>0.
    \end{equation*}
    In particular, $\mathrm{cDR}(u,T)$ is not monotone with respect to $u$.
\end{proposition}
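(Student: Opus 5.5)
The plan is to work directly with the representation \eqref{eq:int_by_part}, $\partial_u\mathrm{cDR}(u,T)=\int_0^T\lambda_u(t)g_u(t)\,dt$, with $g_u$ and $\lambda_u$ as in \eqref{eq:sys7:vec:ag}--\eqref{eq:sys7:vec:lambda}. This avoids Theorem~\ref{thm:cDR_monotone}(ii), because for a general $x_0\ge 0$ the coefficient $a_u=d/(K+c^\top x_u)$ need not be monotone in $t$, so the sign of $\dot\lambda_u$ is not available. I will use two facts about the kernel. First, $0\le\lambda_u(t)\le T-t\le T$. Second, since $0<a_u\le d/K$, we have $\lambda_u(t)\ge \int_t^T e^{-d(s-t)/K}ds$, which is bounded below by a positive constant independent of $u$ for $t\in[0,T/2]$. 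I also assume $y_0>0$, as throughout. Then $\dot y_u\ge -(d/K)y_u$ gives $y_u(t)\ge y_0e^{-dT/K}=:\underline y>0$ uniformly in $u$, and $\dot y_u\le 1/(\beta u)$ gives $y_u(t)\le y_0+T/(\beta u)$.

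\textbf{Small $u$.} By \eqref{eq:p_vec_explicit}, $0\le c^\top p_u\le c^\top(-A^{-1}b)$, and $K+c^\top x_u\ge K$. Therefore
\begin{equation*}
g_u(t)\le \frac{d\,c^\top(-A^{-1}b)}{K^2}\Bigl(y_0+\frac{T}{\beta u}\Bigr)-\frac{1}{\beta u^2}.
\end{equation*}
The positive part is $O(u^{-1})$ while the negative part is $-1/(\beta u^2)$. Hence there is $u_-$ small enough that $g_{u_-}(t)<0$ on $[0,T]$. Since $\lambda_{u_-}>0$ on $[0,T)$, the integral is strictly negative, which is exactly Theorem~\ref{thm:cDR_nonmonotone}(i) on the $u_-$ side, and so $\partial_u\mathrm{cDR}(u_-,T)<0$.

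\textbf{Large $u$.} I split $\partial_u\mathrm{cDR}=I_1-I_2$, where
\begin{equation*}
I_1=\int_0^T\lambda_u\frac{d\,c^\top p_u\,y_u}{(K+c^\top x_u)^2}\,dt, \qquad I_2=\frac{1}{\beta u^2}\int_0^T\lambda_u\,dt\le\frac{T^2}{\beta u^2}.
\end{equation*}
To bound $I_1$ from below, I restrict to $t\in[0,\delta]$ with $\delta\le T/2$ small. There $\lambda_u\ge\underline\lambda>0$ and $y_u\ge\underline y$. Because $\dot p_u(0)=b$, we also have $m t\le c^\top p_u(t)\le M t$ with $m,M>0$ close to $c^\top b$. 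Writing $c^\top x_u=c^\top e^{At}x_0+u\,c^\top p_u\le C_0+uMt$ via \eqref{eq:x_vec_explicit}, this gives
\begin{equation*}
I_1\ \ge\ \underline\lambda\,\underline y\,d\,m\int_0^\delta\frac{t\,dt}{(K+C_0+uMt)^2}\ =\ \frac{\underline\lambda\,\underline y\,d\,m}{M^2u^2}\bigl(\log u+O(1)\bigr).
\end{equation*}
The explicit integral $\int_0^\delta t/(\kappa+\mu t)^2\,dt=\mu^{-2}[\log(1+\mu\delta/\kappa)-\mu\delta/(\kappa+\mu\delta)]$ with $\mu=uM$ makes the $\log u$ growth precise. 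Consequently $\partial_u\mathrm{cDR}(u,T)\ge C\log u/u^2-T^2/(\beta u^2)>0$ for all $u\ge u_+$ large. This is the analogue of \eqref{eq:sys7:sca:cDR-large-u}, but obtained as a rigorous lower bound rather than as an asymptotic equivalence.

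The main obstacle I expect is this large-$u$ estimate. The pointwise sign of $g_u$ for $t$ bounded away from $0$ is not controlled, since both terms there are $\Theta(u^{-2})$. The argument therefore relies on the logarithmic singularity near $t=0$, together with the uniform-in-$u$ lower bounds on $\lambda_u$ and $y_u$, to dominate everything else. Combining the two regimes shows that $\partial_u\mathrm{cDR}$ changes sign between $u_-$ and $u_+$, so $\mathrm{cDR}(u,T)$ is not monotone.
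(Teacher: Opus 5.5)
Your proof is correct, and it takes a genuinely different route from the paper's.

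\textbf{The paper's route.} It works with the integrated form $\partial_u\mathrm{cDR}=-\int_0^T\dot\lambda_u G_u\,dt$ in both regimes.
\begin{itemize}
\item For $u<K/(\beta d y_0)$ it gets $y_u\ge y_0$ by comparison and rewrites $g_u$ through $\dot y_u$ to show $G_u\le 0$. It then claims $\dot\lambda_u\le 0$, on the grounds that $c^\top x_u$ is nonincreasing for small $u$.
\item For large $u$ it expands $y_u$ through $h_u(t)=\int_0^t a_u$ and obtains the pointwise asymptotics $G_u(t)=\frac{dy_0}{c^\top b}\frac{\log u}{u^2}+O(u^{-2})$. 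Combined with $-\dot\lambda_u\to 1$, this gives $\partial_u\mathrm{cDR}=\frac{dy_0T}{c^\top b}\frac{\log u}{u^2}+O(u^{-2})$.
\end{itemize}

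\textbf{Your route.} You stay with $\int_0^T\lambda_u g_u\,dt$, which only ever uses $\lambda_u\ge 0$.
\begin{itemize}
\item For small $u$, crude bounds make $g_u$ pointwise negative: the $-1/(\beta u^2)$ term dominates a positive part of order $O(u^{-1})$.
\item For large $u$, you replace the asymptotic expansion by an explicit lower bound on a window near $t=0$, using lower bounds on $\lambda_u$ and $y_u$ that are uniform in $u$.
\end{itemize}

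\textbf{What each buys.} The paper's approach yields the sharp leading constant $dy_0T/(c^\top b)$ and fits its $G_u$-based framework. Yours gives a complete argument that needs no sign information on $\dot\lambda_u$, so it holds for every $x_0\ge 0$ and every Metzler Hurwitz $A$.

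\textbf{Your caution about $\dot\lambda_u$ is justified.}
\begin{itemize}
\item $c^\top e^{At}x_0$ need not be nonincreasing for Metzler $A$. It can increase when mass flows from a component with small $c$-weight into one with large $c$-weight.
\item Even for $x_0=0$ one has $c^\top x_u=u\,c^\top p_u$, which is strictly increasing. Then $a_u$ decreases and the paper's kernel-comparison argument does not apply.
\item For large $u$, $-\dot\lambda_u(0)\to 1-dT/(K+c^\top x_0)$, which is negative when $dT>K+c^\top x_0$. So $-\dot\lambda_u$ is not a positive weight uniformly near $t=0$.
\end{itemize}
The paper only sketches its large-$u$ estimate, whereas your explicit computation of $\int_0^\delta t(\kappa+\mu t)^{-2}\,dt$ makes the $\log u/u^2$ growth rigorous.
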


\begin{proof}
The sensitivity $q_u(t)=\partial_u y_u(t)$ satisfies $\dot q_u(t)+a_u(t)q_u(t)=g_u(t),$ where $g_u(t)$ is given in \eqref{eq:sys7:vec:ag}.

\noindent
\textbf{Case: small $u$.}
If $u<K/(\beta d y_0),$ then for every $t\in[0,T]$,
\begin{equation*}
    \dot y_u\big|_{y_u=y_0}
    = \frac{1}{\beta u}-\frac{d y_0}{K+c^\top x_u(t)}
    \ge \frac{1}{\beta u}-\frac{d y_0}{K} >0.
\end{equation*}
Since $y_u(0)=y_0$, the comparison principle yields $y_u(t)\ge y_0$, $\forall t\in[0,T]$.
Next, using $x_u=e^{At}x_0+up_u$ and $\dot y_u$ in \eqref{eq:sys7-vector} we rewrite $g_u$ as
\begin{equation}\label{eq:sys7:vec:g-rewrite}
    g_u = -\frac{1}{u}\dot y_u
    -\frac{d(K+c^\top e^{At}x_0)}{u(K+c^\top x_u)^2}y_u.
\end{equation}
Integrating \eqref{eq:sys7:vec:g-rewrite} over $[0,t]$, we obtain
\begin{equation}\label{eq:sys7:vec:G-rewrite}
    G_u = \frac{y_0-y_u}{u}-\frac{d}{u}\int_0^t
    \frac{\bigl[K+c^\top e^{As}x_0\bigr]y_u(s)}{(K+c^\top x_u(s))^2}\,ds.
\end{equation}
Since $y_u\ge y_0$ and the second term in \eqref{eq:sys7:vec:G-rewrite} is nonpositive, it follows that
$G_u\le 0, \forall t\in[0,T].$
Moreover, $G_u\not\equiv 0$ because the second term in \eqref{eq:sys7:vec:G-rewrite} is strictly negative.

It remains to justify that $\dot\lambda_u\le 0$ on $[0,T]$, where
\begin{equation*}
    \dot\lambda_u=-1+a_u\lambda_u,
    \qquad
    a_u=\frac{d}{K+c^\top x_u}.
\end{equation*}
For sufficiently small $u$, $c^\top x_u=c^\top e^{At}x_0+uc^\top p_u$ remains nonincreasing on $[0,T]$, since $c^\top e^{At}x_0$ is nonincreasing on $[0,T]$ and the perturbation $uc^\top p_u$ is small. Hence $a_u$
is nondecreasing on $[0,T]$. Therefore, by the kernel monotonicity argument, $\lambda_u$ is nonincreasing, i.e. $\dot\lambda_u\le 0$ on $[0,T]$.

Since $G_u(t)\le 0$ on $[0,T]$, $G_u\not\equiv 0$, and $\dot\lambda_u(t)\le 0$, Theorem~\ref{thm:cDR_monotone} (ii) gives
$\partial_u\mathrm{cDR}(u,T)<0$ for all sufficiently small $u$.

\textbf{Case: large $u$.} 
By variation of parameters formula,
\begin{equation}\label{eq:sys7:vec:y_explicit}
    y_u(t)=y_0e^{-h_u(t)}+\frac{1}{\beta u}\int_0^t e^{-(h_u(t)-h_u(s))}\,ds, 
    \qquad h_u(t):=\int_0^t \frac{d}{K+c^\top x_u(\tau)}\,d\tau.
\end{equation}
Since $\dot p_u(0)=b$, we have $c^\top p_u(s)\sim (c^\top b)s$, as $s\to 0$, and $e^{As}\sim I$, as $s\to 0$.
Therefore, for each fixed $t>0$,
\begin{align*}
    h_u(t):=\int_0^t \frac{d}{K+c^\top x_u(\tau)}\,d\tau
    &\sim \int_0^t \frac{d}{K+c^\top e^{A\tau}x_0+uc^\top p_u(\tau)}\,d\tau \\
    &=\int_0^t \frac{d}{K+c^\top x_0+u(c^\top b)\tau}\,d\tau \\
    &=\frac{d}{u(c^\top b)}\log \left(\frac{K+c^\top x_0+u(c^\top b)t}{K+c^\top x_0}\right) 
    =\frac{d}{u(c^\top b)}\log u+O(u^{-1}), \quad u\to\infty.
\end{align*}
Since $h_u\to 0$, using Taylor expansion $e^{-z} = 1 - z + O(z^2)$ as $z\to 0$, we have $e^{-h_u(t)} = 1-h_u(t)+O(h_u(t)^2)$. Using \eqref{eq:sys7:vec:y_explicit}, it follows that
\begin{equation}\label{eq:sys7:vec:y_final}
    \begin{aligned}
    y_u(t)&=y_0-\frac{d y_0}{u(c^\top b)}\log u+O(u^{-1}), \qquad u\to\infty \\
    \frac{y_0-y_u(t)}{u}&=\frac{d y_0}{u^2(c^\top b)}\log u+O(u^{-2}).
    \end{aligned}
\end{equation}
On the other hand, by \eqref{eq:sys7:vec:G-rewrite}, the second term is $O(u^{-2})$ as $u\to\infty$ because $y_u$ remains bounded.
Therefore, by \eqref{eq:sys7:vec:G-rewrite} and \eqref{eq:sys7:vec:y_final}, we obtain
\begin{equation}\label{eq:sys7:vec:G-asym}
    G_u(t)=\frac{d y_0}{u^2(c^\top b)}\log u+O(u^{-2}),\qquad u\to\infty.
\end{equation}
In particular, for every fixed $t>0$, one has $G_u(t)>0$ for all sufficiently large $u$.

We also note that
\begin{equation*}
    \dot\lambda_u(t)
    =
    -1+\frac{d}{K+c^\top x_u(t)}\lambda_u(t).
\end{equation*}
Indeed, this follows by differentiating $\lambda_u$ in \eqref{eq:sys7:vec:lambda}.
For fixed $t>0$, as $u\to\infty$ we have $x_u=e^{At}x_0+up_u\to\infty$, and hence
$d/(K+c^\top x_u)\to 0.$ Moreover, $\lambda_u(t)\to T-t$. 
Therefore, $\dot\lambda_u(t)\to -1$, equivalently $-\dot\lambda_u(t)\to 1$, as $u\to\infty$.
Thus, in the large-$u$ regime, $-\dot\lambda_u$ acts as a positive weight in the integral representation
\begin{equation*}
        \partial_u\mathrm{cDR}(u,T)=-\int_0^T \dot\lambda_u(t)G_u(t)\,dt.
\end{equation*}
Consequently, the leading contribution to
$-\dot\lambda_u(t)G_u(t)$ is positive and of order
$(\log u)/u^2$. More precisely, estimating the integral directly gives
\begin{equation*}
    \partial_u\mathrm{cDR}(u,T)
    =
    \frac{d y_0 T}{c^\top b}\frac{\log u}{u^2}
    +O(u^{-2}),
    \qquad u\to\infty.
\end{equation*}
Hence $\partial_u\mathrm{cDR}(u,T)>0$ for all sufficiently large $u$.

Thus there exist $u_-,u_+>0$ such that $\partial_u\mathrm{cDR}(u_-,T)<0$ and $\partial_u\mathrm{cDR}(u_+,T)>0,$
and therefore $\mathrm{cDR}(u,T)$ is not monotone.
\end{proof}

\begin{figure}[h!]
    \centering
    \subfloat[\label{v7a}]{\includegraphics[width=0.3\linewidth]{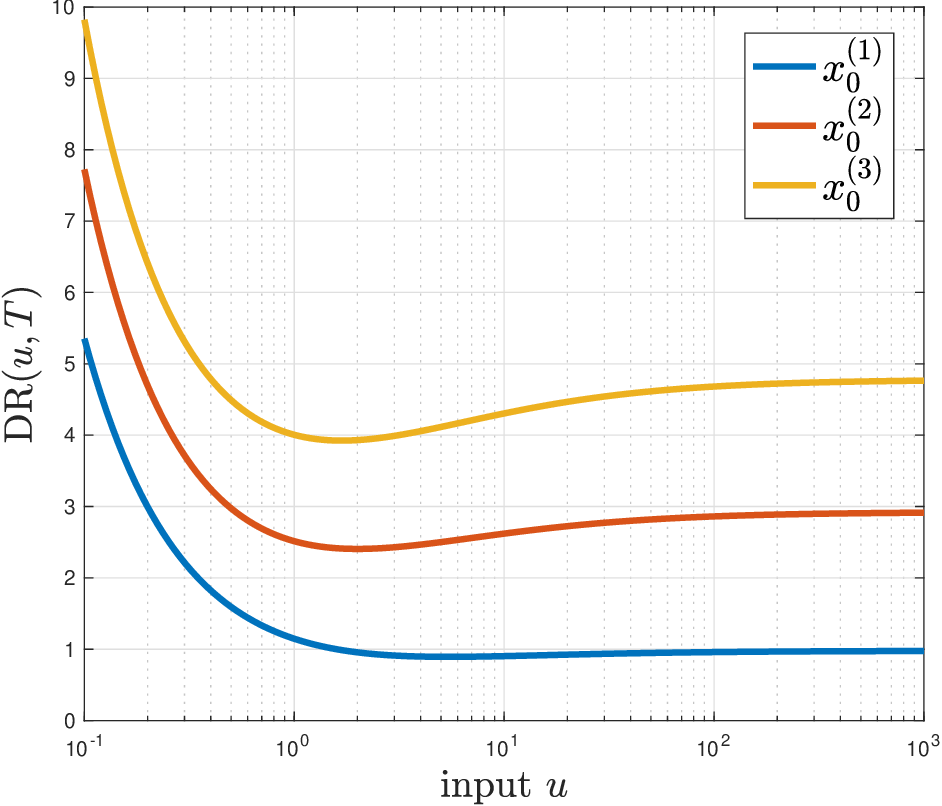}}
    \subfloat[\label{v7b}]{\includegraphics[width=0.3\linewidth]{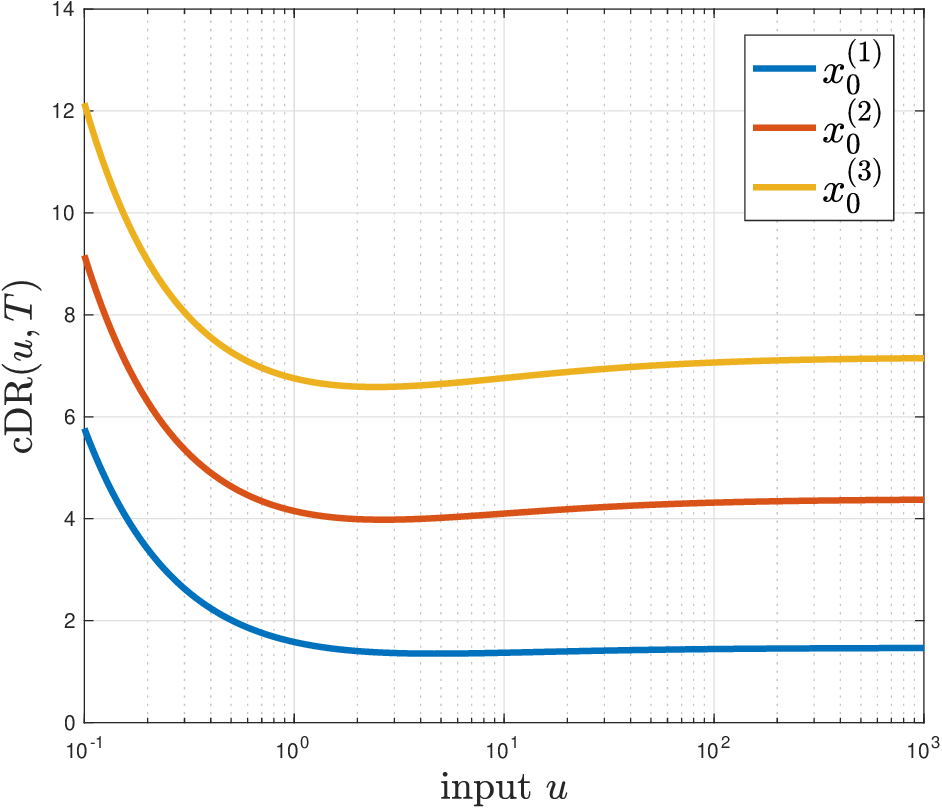}}
    \caption{Vector System 7: Dose response $\mathrm{DR}(u,T)$ and cumulative dose response $\mathrm{cDR}(u,T)$ for $u\in[10^{-1},10^{3}]$. 
    Results are shown for three fixed initial conditions $x_0$. 
    The curves illustrate that $\mathrm{cDR}(u,T)$ is not monotone with respect to $u$, exhibiting a change in monotonicity.}
    \label{fig:v_sys7}
\end{figure}

\subsection*{Vector case of System 8}

\begin{subequations}
We consider the vector analogue of System~8:
\begin{equation}\label{eq:sys8-vector}
    \dot x_u = A x_u + bu, \qquad
    \dot y_u = d-\frac{\beta u}{K+c^\top x_u} y_u,
\end{equation}
where $A\in\mathbb{R}^{n\times n}$ is Metzler and Hurwitz, $b,c\in\mathbb{R}^n_+$, and $K,\beta,d\in\mathbb{R}_+$. 
We assume that the initial condition $x_u(0)=x_0$ is independent of $u$, with $x_0\ge 0$, and that $y_u(0)=y_0$ is also independent of $u$.

Let $q_u:=\partial_u y_u\in\mathbb{R}$. Differentiating the $y$-equation in \eqref{eq:sys8-vector} with respect to $u$ gives
\begin{equation*}
    \dot q_u = -\frac{\beta}{K+c^\top x_u}y_u
    +\frac{\beta uc^\top p_u}{(K+c^\top x_u)^2}y_u
    -\frac{\beta u}{K+c^\top x_u}q_u,
    \qquad q_u(0)=0.
\end{equation*}
Therefore, $q_u$ satisfies $\dot q_u + a_u q_u = g_u$, $q_u(0)=0$ with
\begin{equation*}
    a_u=\frac{\beta u}{K+c^\top x_u}, \qquad
    g_u=-\frac{\beta}{K+c^\top x_u}y_u
    +\frac{\beta uc^\top p_u}{(K+c^\top x_u)^2}y_u.
\end{equation*}
Using \eqref{eq:x_vec_explicit}, we rewrite $g_u$ as
\begin{equation}\label{eq:sys8:vec:g}
    g_u
    = \frac{\beta y_u}{(K+c^\top x_u)^2}\bigl(uc^\top p_u-K-c^\top x_u\bigr)
    = -\frac{\beta y_u}{(K+c^\top x_u)^2}\bigl(K+c^\top e^{At}x_0\bigr).
\end{equation}
Since $K>0$, $e^{At}\ge 0$, $c\in\mathbb{R}^n_+$, $x_0\ge 0$, and $y_u\ge 0$, we conclude that $g_u\le 0, \forall t\ge 0, \forall u>0.$
Integrating $g_u$ over $[0,t]$, we obtain
\begin{equation}\label{eq:sys8:vec:G}
    G_u:=\int_0^t g_u(s)\,ds
    = -\int_0^t \frac{\beta y_u(s)}{(K+c^\top x_u(s))^2}\bigl(K+c^\top e^{As}x_0\bigr)\,ds \le 0,
    \quad \forall t\ge 0, \forall u>0.
\end{equation}
Since $a_u=\beta u/(K+c^\top x_u)\ge 0$, the kernel $\lambda_u$ defined in Theorem~\ref{thm:cDR_monotone} satisfies
\begin{equation}\label{eq:sys8:vec:lambda}
    \lambda_u = \int_t^T \exp\!\left(-\int_t^s \frac{\beta u}{K+c^\top x_u(\tau)}\,d\tau\right)\,ds.
\end{equation}
Therefore, $\lambda_u\ge 0$ for all $t\in[0,T]$ and $u>0$.
\end{subequations}

\begin{proposition}\label{prop:sys8:vec}
    Consider the system \eqref{eq:sys8-vector}, where $A$ is Metzler and Hurwitz, $b,c\in\mathbb{R}^n_+$, $K,\beta,d\in\mathbb{R}_+$, and $x_0\ge 0$ is independent of $u$.
    Then, for every $T>0$, $\mathrm{cDR}(u,T)$ is monotone nonincreasing with respect to $u>0$.
\end{proposition}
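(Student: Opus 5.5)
The plan is to follow exactly the pattern used for the vector cases of Systems~1, 3 and 6, invoking Theorem~\ref{thm:cDR_monotone}~(i). All the ingredients have already been assembled just before the statement. The sensitivity $q_u=\partial_u y_u$ satisfies $\dot q_u+a_uq_u=g_u$ with $q_u(0)=0$, where $a_u=\beta u/(K+c^\top x_u)$ and $g_u$ is given by \eqref{eq:sys8:vec:g}. We must show that $\lambda_u\ge 0$ and $g_u\le 0$ on $[0,T]$ for every $u>0$. Then $\lambda_ug_u\le 0$, and the representation \eqref{eq:int_by_part} gives $\partial_u\mathrm{cDR}(u,T)=\int_0^T\lambda_ug_u\,dt\le 0$.

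The steps, in order, are as follows.

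First, recall from \eqref{eq:p_vec_explicit}--\eqref{eq:x_vec_explicit} that $p_u\ge 0$ and $x_u=e^{At}x_0+up_u\ge 0$. The latter holds because $A$ is Metzler, so $e^{At}\ge 0$, and because $b\ge 0$ and $x_0\ge 0$. Consequently $K+c^\top x_u\ge K>0$, so $a_u$ is well defined and nonnegative.

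Second, note that the integrand $\exp(-\int_t^s a_u)$ in \eqref{eq:sys8:vec:lambda} is positive. Hence $\lambda_u\ge 0$ on $[0,T]$, and in fact $\lambda_u>0$ for $t<T$.

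Third, establish the sign of $g_u$. The cancellation $K+c^\top x_u-uc^\top p_u=K+c^\top e^{At}x_0$ gives $g_u=-\beta y_u(K+c^\top e^{At}x_0)/(K+c^\top x_u)^2$. This is nonpositive as soon as $y_u\ge 0$.

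Fourth, conclude via Theorem~\ref{thm:cDR_monotone}~(i). Since $u>0$ was arbitrary, $u\mapsto\mathrm{cDR}(u,T)$ is nonincreasing. If needed, the equivalent route through \eqref{eq:sys8:vec:G} and Theorem~\ref{thm:cDR_monotone}~(ii) also works, but (i) suffices.

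The only point that needs genuine justification is $y_u\ge 0$ along the whole trajectory. This is needed for the sign of $g_u$ and is only asserted in the Remark. I would verify it directly. On the face $y=0$ we have $\dot y_u=d>0$, so with $y_0\ge 0$ the trajectory cannot cross into $y<0$.

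For a fully explicit argument, the $y$-equation is linear in $y$ with a continuous coefficient. Variation of parameters gives
\[
y_u(t)=e^{-\int_0^t a_u}y_0+d\int_0^t e^{-\int_s^t a_u}\,ds,
\]
and both terms are nonnegative. This settles the sign of $g_u$ and completes the argument.
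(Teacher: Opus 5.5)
Your proposal is correct and matches the paper's proof: both take $\lambda_u\ge 0$ from \eqref{eq:sys8:vec:lambda}, use the cancellation $g_u=-\beta y_u(K+c^\top e^{At}x_0)/(K+c^\top x_u)^2\le 0$, and conclude via Theorem~\ref{thm:cDR_monotone}~(i). Your variation-of-parameters check that $y_u\ge 0$, using the standing assumption $y_0\ge 0$, makes explicit a step the paper only asserts.
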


\begin{proof}
    The sensitivity $q_u(t)=\partial_u y_u(t)$ satisfies $\dot q_u(t) + \dfrac{\beta u}{K+c^\top x_u(t)} q_u(t) = g_u(t)$ where $g_u(t)$ is given in \eqref{eq:sys8:vec:g}. From \eqref{eq:sys8:vec:lambda}, $\lambda_u\ge 0, \forall t\in[0,T], \forall u > 0$ and $g_u(t)\le 0$, $\forall t\ge 0, \forall u > 0$. Thus $\lambda_u(t)g_u(t)\le 0, \forall t\in[0,T]$. By Theorem~\ref{thm:cDR_monotone} (i), $\partial_u \mathrm{cDR}(u,T) \le 0.$
    Hence $u\mapsto \mathrm{cDR}(u,T)$ is monotone nonincreasing.
\end{proof}

\begin{figure}[h!]
    \centering
    \subfloat[\label{v8a}]{\includegraphics[width=0.3\linewidth]{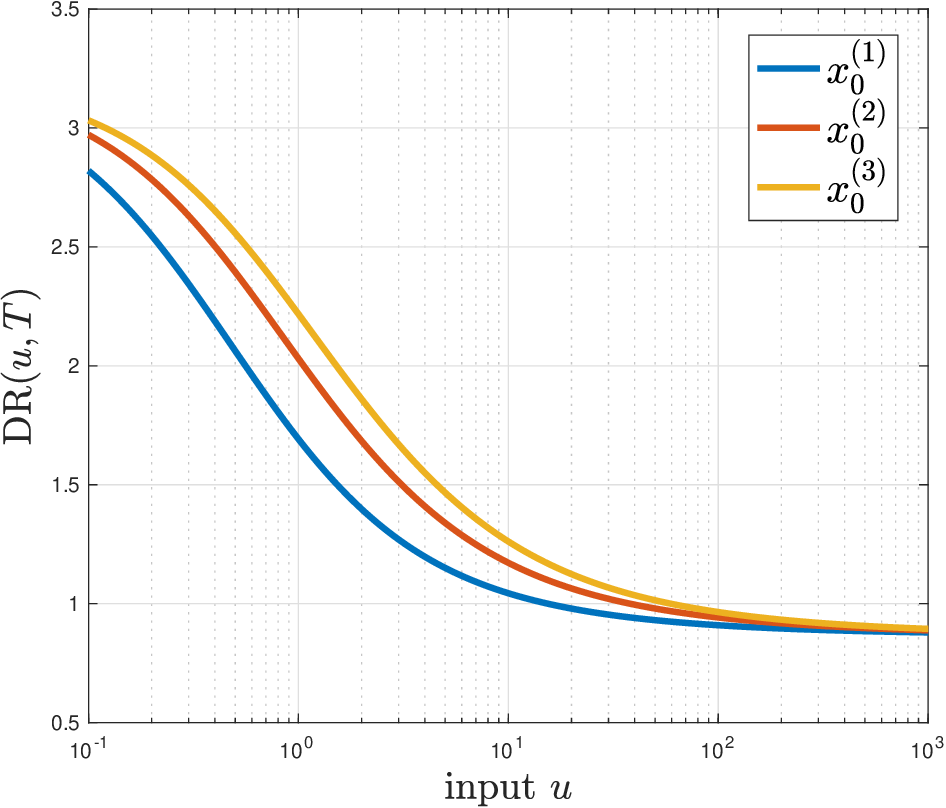}}
    \subfloat[\label{v8b}]{\includegraphics[width=0.3\linewidth]{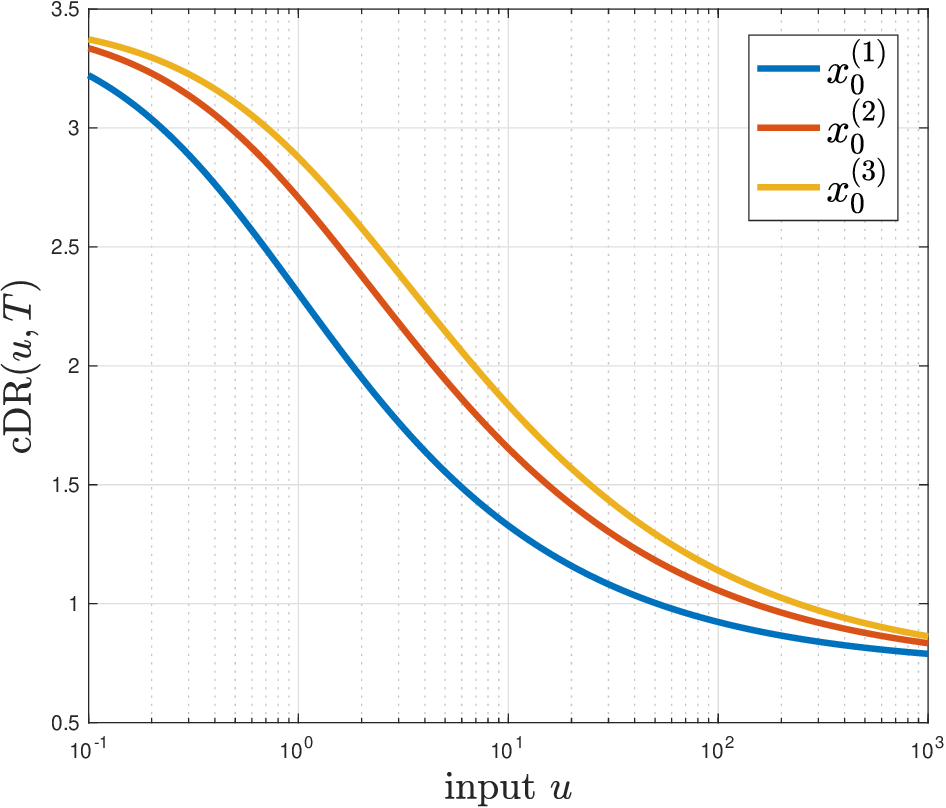}}
    \caption{Vector System 8: Dose response $\mathrm{DR}(u,T)$ and cumulative dose response $\mathrm{cDR}(u,T)$ for $u\in[10^{-1},10^{3}]$. 
    Results are shown for three fixed initial conditions $x_0$. 
    The curves illustrate that $\mathrm{cDR}(u,T)$ is monotone nonincreasing with respect to $u$.}
    \label{fig:v_sys8}
\end{figure}

\end{document}